\documentclass{elsarticle}  

\usepackage{amsmath}
\usepackage{amsfonts}
\usepackage{amssymb}
\usepackage{graphicx}
\usepackage[utf8]{inputenc}
\usepackage[dvipsnames]{xcolor}
\usepackage[shortlabels]{enumitem}
\usepackage[utf8]{inputenc}
\usepackage{amssymb,amsmath,amsthm, amsfonts}
\usepackage{mathtools}
\usepackage[algo2e,linesnumbered,noend,noline]{algorithm2e}
\usepackage{algorithmic,refcount}
\usepackage[a4paper]{geometry}


\newtheorem{corollary}{Corollary}
\newtheorem{theorem}{Theorem}
\newtheorem{lemma}{Lemma}
\newtheorem{claim}{Claim}

\usepackage{lipsum}
\usepackage{amsfonts}
\usepackage{graphicx}
\usepackage{epstopdf}
\usepackage{algorithmic}
\usepackage{titlesec}
\usepackage{hyperref}

\ifpdf
  \DeclareGraphicsExtensions{.eps,.pdf,.png,.jpg}
\else
  \DeclareGraphicsExtensions{.eps}
\fi




\title{Reachability in arborescence packings}


\author{Florian H\"orsch}
 \ead{Florian.Hoersch@grenoble-inp.fr}
\address{Technische Universit\"at Ilmenau, Fakult\"at f\"ur Mathematik und Naturwissenschaften Institut f\"ur Mathematik.}
\author{Zolt\'an Szigeti}
\ead{ Zoltan.Szigeti@grenoble-inp.fr}
\address{Univ.~Grenoble~Alpes, Grenoble INP, CNRS, G-SCOP, 46 Avenue F\'elix Viallet, Grenoble, France, 38000.  }

\usepackage{amsopn}

\ifpdf
\hypersetup{
  pdftitle={Reachability in arborescence packings},
  pdfauthor={F.Hoersch and Z.Szigeti}
}
\fi




\titleclass{\subsubsubsection}{straight}[\subsection]
\newcounter{subsubsubsection}[subsubsection]
\renewcommand\thesubsubsubsection{\thesubsubsection.\arabic{subsubsubsection}}

\titleformat{\subsubsubsection}
  {\normalfont\normalsize\bfseries}{\thesubsubsubsection}{1em}{}
\titlespacing*{\subsubsubsection}
{0pt}{3.25ex plus 1ex minus .2ex}{1.5ex plus .2ex}

\makeatletter
\renewcommand\paragraph{\@startsection{paragraph}{5}{\z@}%
  {3.25ex \@plus1ex \@minus.2ex}%
  {-1em}%
  {\normalfont\normalsize\bfseries}}
\renewcommand\subparagraph{\@startsection{subparagraph}{6}{\parindent}%
  {3.25ex \@plus1ex \@minus .2ex}%
  {-1em}%
  {\normalfont\normalsize\bfseries}}
\def\toclevel@subsubsubsection{4}
\def\toclevel@paragraph{5}
\def\toclevel@paragraph{6}
\def\l@subsubsubsection{\@dottedtocline{4}{7em}{4em}}
\def\l@paragraph{\@dottedtocline{5}{10em}{5em}}
\def\l@subparagraph{\@dottedtocline{6}{14em}{6em}}
\makeatother

\setcounter{secnumdepth}{4}
\setcounter{tocdepth}{4}
\begin{document}

\begin{abstract}
 Fortier et al. proposed several research problems on packing arborescences and settled some of them. Others were later solved  by Matsuoka and Tanigawa and by Gao and Yang. The last open problem is settled in this article. We show how to turn an inductive idea used in the latter two articles into a simple proof technique that allows to relate previous results on arborescence packings. 
We prove that a strong version of Edmonds' theorem on packing spanning arborescences implies Kamiyama, Katoh and Takizawa's result on packing reachability arborescences and that Durand de Gevigney, Nguyen and Szigeti's theorem on matroid-based packing of arborescences implies Kir\'aly's result on matroid-reachability-based packing of arborescences. 
Further, we deduce a new result on matroid-reachability-based packing of mixed hyperarborescences from a theorem on matroid-based packing of mixed hyperarborescences due to Fortier et al.. 
Finally, we deal with the algorithmic aspects of the problems considered. We first obtain algorithms to find the desired packings of arborescences in all settings and then apply Edmonds' weighted matroid intersection algorithm to also find solutions minimizing a given weight function.
\end{abstract}

\maketitle



\section{Introduction}

This article deals with the packing of arborescences. We focus on concluding characterizations of graphs admitting a packing of reachability arborescences from the corresponding theorems for spanning arborescences in several settings. We first give an overview of the results in this article. All technical terms which are not defined here will be explained in Section \ref{defi}.

In 1973, Edmonds \cite{e} characterized digraphs having a packing of $k$ spanning $r$-arborescences for some $k\in \mathbb Z_+$ and for some vertex $r.$ Since then, there have been numerous generalizations of this result. A first attempt is to allow different roots for the arborescences. A version with arbitrary, fixed roots can easily be derived from the theorem of Edmonds. This generalization has a significant deficiency occurring when some vertex  is not reachable from some designated root. In this case, the only information it provides is that the desired packing does not exist. 
A concept to overcome this problem has been developed by Kamiyama, Katoh and Takizawa in \cite{kkt}. Given a digraph $D$ and a multiset $R$ of the vertex set of $D$, they wish to find  a packing $\{B_r\}_{r\in R}$ of arborescences such that each $B_r$ has root $r$ and contains all the vertices reachable in $D$ from $r$. They provide a characterization of these graph-multiset pairs. Their result has been considered a breakthrough. As one of the main contributions of this article, we reprove their theorem by a very simple reduction from a stronger form of Edmonds' theorem.

Another way of generalizing the requirements on the packing of arborescences was introduced by Durand de Gevigney, Nguyen and Szigeti in \cite{dns}.  Given a digraph $D$ and a matroid ${\cal M}$ on a set of roots  $R$ in the vertex set of $D$, they now wish to find a packing  of arborescences containing an $r$-arborescence for every $r \in R$ such that every vertex of $D$ that is not in $R$ belongs to a subset of the arborescences in the packing whose roots form a basis of ${\cal M}$. A characterization of graph-matroid pairs admitting such a packing of arborescences was found in \cite{dns}.
A natural combination of the two aforementioned generalizations was introduced by Kir\'aly \cite{k}.  
 Given a digraph $D$ and a matroid ${\cal M}$ on a set $R$ of roots in the vertex set of $D$, he wishes to find  a packing  of arborescences containing an $r$-arborescence for every $r \in R$ such that every vertex $v$ of $D$ that is not in $R$ belongs  to a subset of the arborescences in the packing whose roots form a basis of the matroid ${\cal M}|R_v$ where $R_v$ is the set of roots in $R$ from which $v$ is reachable in $D$ and ${\cal M}|R_v$ is the restriction of $\cal M$ to $R_v$.
A characterization of the graph-matroid pairs admitting such a packing of arborescences was provided by Kir\'aly \cite{k}. We reprove this theorem by concluding it  from  the theorem in \cite{dns}. We wish to mention that the last two results are presented in a form which is different from the original one in \cite{dns} and \cite{k}, but both versions are equivalent.

Finally, there are attempts to also generalize the objects considered from digraphs to more general objects like mixed graphs or dypergraphs. We consider a concept unifying all of these generalizations where we want to find a matroid-reachability based packing of mixed hyperarborescences in a matroid-rooted mixed hypergraph. We derive a characterization of these mixed hypergraph-matroid pairs from a characterization for the existence of a matroid-based packing of  mixed hyperarborescences in a matroid-rooted mixed hypergraph by Fortier et al. in \cite{fklst}. This generalizes results of Gao and Yang \cite{gy} and Matsuoka and Tanigawa \cite{mt}.

A simple inductive  proof technique is presented in three more and more general frameworks.
With an increasing generality of the results, the proofs become more and more involved.
In order to be able to understand the proof of the new result of this article we recommend the reader to read all the proofs in the given order.

In the last part, we deal with the algorithmic aspects of our work. We first show that in  the first two settings, our proofs can be turned into polynomial time recursive algorithms for finding the desired packings. In order to obtain algorithms that compute optimal weighted solutions, we rely on matroid intersection. This is useful due to a result of Edmonds that states that a common independent set of two matroids of a given size minimizing a given weight function can be computed in polynomial time if polynomial time independence oracles for the matroids are available \cite{e2}. A connection in the basic setting between arborescence packing and matroid intersection has first been observed by Edmonds \cite{e4}. Later, Kir\'aly, Szigeti and Tanigawa managed to also model matroid-based packings of arborescences as the intersection of two matroids yielding an algorithm for the weighted version of this problem \cite{kst}. In a first step, we show how to extend their construction to mixed hypergraphs. In \cite{kst}, also a rather involved construction was given that modeled matroid-reachability-based  packings of arborescences as the intersection of two matroids. A similar construction for matroid-reachability-based packings of mixed hyperarborescences in matroid-rooted mixed hypergraphs has recently been given by Kir\'aly \cite{kiralgo}. We give a different algorithm for the weighted version of this problem. Making use of the proof of our result characterizing matroid-rooted mixed hypergraphs admitting a matroid-reachability-based packing of mixed hyperarborescences, we give a polynomial time algorithm that uses the algorithm that computes a solution of minimum weight  for the case of matroid-based packings of mixed hyperarborescences and only uses elementary arguments otherwise. This allows to avoid the very involved matroid construction the reachability condition imposes.

Section \ref{defi} contains the definitions we need in this article.
Section \ref{res} provides a more technical and detailed overview of the results considered. 
Section \ref{red} gives the reductions that yield our new proofs. 
Section \ref{alg} deals with the algorithmic impacts of our results.

\section{Definitions}\label{defi}

In this section, we provide the definitions and notation needed in the article. 
All graphs and matroids and graphs considered in this article are finite.
For a single element set $\{s\}$, we use {\boldmath$s$}.

Let {\boldmath $S$} be a finite set. We use {\boldmath$2^S$} to denote the power set of $S$. Let $f:2^S\rightarrow \mathbb{Z}$ be a function. Then $f$ is called {\it submodular} if $f(X)+f(Y)\geq f(X \cup Y)+f(X \cap Y)$ for all $X,Y \subseteq S$ and {\it modular} if $f(X)+f(Y)= f(X \cup Y)+f(X \cap Y)$ for all $X,Y \subseteq S$. Further, $f$ is called {\it non-decreasing} if $f(X)\leq f(Y)$ for all $X,Y\subseteq S$ with $X \subseteq Y$.


For basic notions of matroid theory, we refer to \cite{book}, Chapter 5. We only give the following less common notation. Given a matroid $\mathcal{M}$ on a set $R$ with rank function $r_{\mathcal{M}}$ and a subset  $X$ of $R$, we denote by {\boldmath$\mathcal{M}|X$} the restriction of $\mathcal{M}$ to $X$. By a {\it basis} of $X$ we mean a basis of $\mathcal{M}|X$.

\subsection{Directed graphs}

We first provide some basic notation on {\it directed graphs (digraphs)}. Let $D=(V,A)$ be a digraph. For disjoint $X,Y \subseteq V$, we denote the set of arcs with tail in $X$ and head in $Y$ by {\boldmath $\rho_A(X,Y)$} and $|\rho_A(X,Y)|$ by {\boldmath $d_A(X,Y)$}. We use {\boldmath$\rho_A^+(X)$} for $\rho_A(X,V-X)$, {\boldmath$\rho_A^-(X)$} for $\rho_A(V-X,X)$, {\boldmath$d_A^+(X)$} for $|\rho_A^+(X)|$ and {\boldmath$d_A^-(X)$} for $|\rho_A^-(X)|$. We denote by {\boldmath$N_D^+(X)$} ({\boldmath$N_D^-(X)$}) the set of vertices $v \in V-X$ such that $D$ contains at least one arc from $X$ to $v$ (from $v$ to $X$).  We call a vertex $r$ of $D$ a {\it root} in $D$ if $d_A^-(r)=0$ and a {\it simple} root if additionally $d_A^+(r)\le 1$. This name is motivated by the role these vertices will play as the roots of arborescences.

For $u,v \in V$, we say that $v$ is {\it reachable} from $u$ in $D$ if there exists a directed path from $u$ to $v$ in $D.$ For $X \subseteq V$, we denote by {\boldmath $U_X^D$} the set of vertices which are reachable in $D$ from at least one vertex in $X$, by {\boldmath$P^D_X$}  the set of vertices from which at least one vertex in $X$ is reachable  in $D$ and by {\boldmath $D[X]$} the subdigraph of $D$ induced on $X$.

An {\it arborescence} is a digraph such that exactly one arc enters each vertex except one which is a root and each vertex is reachable from this root. Observe that every arborescence contains a unique root. An arborescence whose unique root is a vertex $r$ is also called an {\it $r$-arborescence}. An arborescence $B$ is said to {\it span} $V(B)$.  By a {\it packing of arborescences} or {\it arborescence packing} in $D$, we mean a set of arc-disjoint arborescences in $D$.

We define a {\it (simply) rooted digraph} as a digraph $D=(V\cup R,A)$ with {\boldmath $R$} being a set of (simple) roots. Given a rooted digraph $D=(V \cup R, A)$ and $r \in R$, an $r$-arborescence is called {\it spanning} if it spans $V\cup r$. If the $r$-arborescence spans all the vertices in $V$ reachable in $D$ from $r$, then it is called {\it reachability} $r$-arborescence.

A {\it (simply) matroid-rooted digraph}  is a tuple $(D,\mathcal{M})$ where {\boldmath $D$} $=(V\cup R,A)$ is a (simply) rooted digraph and {\boldmath $\mathcal{M}$} $=(R,r_{\mathcal{M}})$ is a matroid with ground set $R$ and rank function {\boldmath $r_{\mathcal{M}}$}.  

Given a matroid-rooted digraph $(D=(V \cup R, A),\mathcal{M}=(R,r_{\mathcal{M}}))$, we call an arborescence packing $\{B_r\}_{r\in R}$  {\it matroid-based} ({\it matroid-reachability-based}) if for all $r \in R$, the root of $B_r$ is $r$  and  for all $v \in V$, 
 the set of roots of the arborescences $B_r$ that contain $v$ forms a basis of $R$ (of $P_v^D\cap R$) in $\mathcal{M}$.

\subsection{Mixed hypergraphs}

We now turn our attention to mixed hypergraphs.
A {\it mixed hypergraph} is a tuple {\boldmath $\mathcal{H}$} $=(V,\mathcal{A}\cup \mathcal{E})$ where  {\boldmath $V$} is a  set of vertices, {\boldmath $\mathcal{A}$} is a set of directed hyperedges (dyperedges)  and {\boldmath $\mathcal{E}$} is a set  of hyperedges. A {\it dyperedge} $a$ is a tuple $(tail(a), head(a))$ where {\boldmath $head(a)$} is a single vertex in $V$ and {\boldmath $tail(a)$} is a nonempty subset of $V-head(a)$ and  a {\it hyperedge} is a subset of $V$ of size at least two. A mixed hypergraph without hyperedges is called a {\it directed hypergraph (dypergraph)}. 
A dyperedge $\vec e$ is an {\it orientation} of a hyperedge $e$ if $\vec e = (e-v,v)$ for some $v \in e$. A dypergraph $\vec{\mathcal{H}}=(V,\mathcal{A}\cup\vec{\mathcal{E}})$ is an orientation of $\mathcal{H}$ if $\vec{\mathcal{E}}$ is obtained from $\mathcal{E}$ by replacing $e$ by an orientation of $e$ for each $e\in\mathcal{E}.$
We say that $\mathcal{H}$ is a {\it mixed graph} if each dyperedge has a tail of size exactly one and each hyperedge has exactly two vertices.

Let $X \subseteq V.$ We say that a dyperedge $a\in \mathcal{A}$ {\it enters X}  if $head(a) \in X$ and $tail(a)-X\neq \emptyset$ and $a$ {\it leaves} $X$ if $a$ enters $V-X.$ We denote by {\boldmath $\rho^-_{\mathcal{A}}(X)$} the set of dyperedges entering $X$ and by {\boldmath $\rho^+_{\mathcal{A}}(X)$} the set of dyperedges leaving $X$. We use {\boldmath $d^-_{\mathcal{A}}(X)$} for $|\rho^-_{\mathcal{A}}(X)|$ and {\boldmath $d^+_{\mathcal{A}}(X)$} for $|\rho^+_{\mathcal{A}}(X)|$. We say that a hyperedge $e$ {\it enters} or {\it leaves} $X$ if $e$ intersects both $X$ and $V-X$ and denote by {\boldmath $d_{\mathcal{E}}(X)$} the number of hyperedges entering $X.$ We denote by {\boldmath$N_{\cal H}^+(X)$}   the set of heads of the dyperedges leaving $X$ and  by {\boldmath$N_{\cal H}^-(X)$}  the union of $tail(a)-X$ for all dyperedges $a\in {\cal A}$ entering $X.$ We call a vertex $r$ a {\it root} in $\mathcal{H}$ if $d_{\mathcal{A}}^-(r)=d_\mathcal{E}(r)=0$ and $tail(a)=\{r\}$ for all $a \in \rho_\mathcal{A}^+(r)$ and a {\it simple} root if additionally $d_\mathcal{A}^+(r)\le 1$. Given a subpartition $\{V_i\}_1^{\ell}$ of $V$, we denote by {\boldmath$e_{\mathcal{E}}(\{V_i\}_1^{\ell})$} the number of hyperedges in  $\mathcal{E}$ entering  some $V_i$ $(i \in \{1,\ldots,\ell\})$.

Making a {\it directed trimming} of a dyperedge $a$ means that $a$ is replaced by an arc $uv$ with $v=head(a)$ and $u\in tail(a)$.
Making a {\it directed trimming} of a hyperedge $e$ means that $e$ is replaced by an arc $uv$ for some distinct elements $u$ and $v$ in  $e$.
The mixed hypergraph $\mathcal{H}$ is called a {\it mixed hyperpath} ({\it mixed hyperarborescence}) if all the dyperedges and all the hyperedges can be made into a directed path (an arborescence) through directed trimming.  A {\it mixed $r$-hyperarborescence} for some $r \in V$ is a mixed hyperarborescence together with a vertex $r$  where the arborescence after the directed trimming can be chosen to be an $r$-arborescence. We also say that a digraph obtained by  making a directed trimming of a mixed hypergraph is a {\it directed trimming} of that mixed hypergraph.

For a vertex set $X \subseteq V$, we denote by {\boldmath $U_X^{\mathcal{H}}$} the set of vertices which are reachable from  at least one vertex in $X$ by a mixed hyperpath in $\mathcal{H}$, by {\boldmath$P^{\mathcal{H}}_X$} the set of vertices from which at least one vertex in $X$ is reachable by a mixed hyperpath in $\mathcal{H}$ and by {\boldmath $\mathcal{H}[X]$} the mixed hypergraph on  vertex set $X$ which contains all hyperedges and dyperedges of $\mathcal{H}$ which are completely contained in $X$. A {\it strongly connected component} of a mixed hypergraph is a maximal set of vertices that can be pairwise  reached from each other by a mixed hyperpath.    

We define a {\it (simply) rooted mixed hypergraph} as a mixed hypergraph $\mathcal{H}=(V\cup R,\mathcal{A}\cup \mathcal{E})$ with {\boldmath $R$} being a set of (simple) roots. Given a rooted mixed hypergraph ${\cal H}=(V \cup R, {\cal A}\cup {\cal E})$ and some $r \in R$, a mixed $r$-hyperarborescence is called {\it spanning} if it can be made into a spanning  $r$-arborescence by a directed trimming and  {\it reachability} if it can be made into an $r$-arborescence spanning all vertices in $U_r^{\mathcal{H}}$ by a directed trimming. For some $\mathcal{Z}\subseteq \mathcal{A}\cup \mathcal{E}$, we denote by {\boldmath $V_\mathcal{Z}$} all the vertices in $V$ which are contained in at least one hyperedge or dyperedge in $\mathcal{Z}$ and by {\boldmath $R_\mathcal{Z}$} all the vertices in $R$ which are contained in at least one dyperedge in $\mathcal{Z}$.
A {\it (simply) matroid-rooted mixed hypergraph} is a tuple $(\mathcal{H}, \mathcal{M})$ where $\mathcal{H}=(V\cup R,\mathcal{A}\cup \mathcal{E})$ is a (simply) rooted mixed hypergraph and {\boldmath $\mathcal{M}$} $=(R,r_{\mathcal{M}})$ is a matroid with ground set $R$ and rank function {\boldmath $r_{\mathcal{M}}$}.  

Given a matroid-rooted mixed hypergraph $({\cal H}=(V \cup R, {\cal A}\cup {\cal E}),\mathcal{M}=(R,r_{\mathcal{M}}))$, a mixed hyperarborescence packing $\{\mathcal{B}_r\}_{r \in R}$ is called {\it matroid-based} if every $\mathcal{B}_r$ can be made into an $r$-arborescence $B_r$ by a directed trimming such that $\{{B}_r\}_{r \in R}$ is a matroid-based  packing of arborescences. A mixed hyperarborescence packing  $\mathcal{B}=\{\mathcal{B}_r\}_{r \in R}$ is called {\it matroid-reachability-based} if every $\mathcal{B}_r$ can be made into an $r$-arborescence $B_r$ by a directed trimming such that for all $v \in V$, $\{r \in R:v \in V(B_r)\}$ is a basis of  $P_v^\mathcal{H}\cap R$ in $\mathcal{M}$. We use {\boldmath$\mathcal{A}(\mathcal{B})$} for the set of dyperedges contained in $\mathcal{B}$ and {\boldmath$\mathcal{E}(\mathcal{B})$} for the set of hyperedges contained in $\mathcal{B}$. Further, given a weight function $w:\mathcal{A} \cup \mathcal{E} \rightarrow \mathbb{R}$, we abbreviate $\sum_{a \in \mathcal{A}(\mathcal{B})}w(a)+\sum_{e \in \mathcal{E}(\mathcal{B})}w(e)$ to {\boldmath $w(\mathcal{B})$}. 

For a hyperedge $e \in \mathcal{E}$, its corresponding {\it bundle} {\boldmath $\mathcal{A}_e$} is the set of all possible orientations of $e$, i.e. $\mathcal{A}_e=\{(e-v,v):v \in e\}$. The {\it directed extension} {\boldmath$\mathcal{D}_{\mathcal{H}}$} $=(V \cup R, \mathcal{A}\cup \mathcal{A}_{\mathcal{E}})$ of $\mathcal{H}$ is obtained by replacing every $e \in \mathcal{E}$ by its corresponding bundle. Note that $\mathcal{A}_\mathcal{E}=\bigcup_{e\in\mathcal{E}} \mathcal{A}_e.$ We say that a packing of hyperarborescences in $\mathcal{D}_{\mathcal{H}}$ is {\it feasible} if it contains at most one dyperedge of the bundle $\mathcal{A}_e$ for all $e \in \mathcal{E}$.
\subsection{Bisets}

Finally, we  introduce some notation on bisets. 
Given a ground set $V$, a {\it biset} {\boldmath${\sf X}$} $=(X_O,X_I)$ consists of an {\it outer set} {\boldmath$X_O$} $\subseteq V$ and an {\it inner set} {\boldmath$X_I$} $\subseteq X_O$. We denote $X_O-X_I$, the {\it wall} of ${\sf X}$, by {\boldmath $X_W$}. 
For a subset $C$ of $V$, a collection of bisets $\{{\sf X}^i\}_1^{\ell}$ is called a {\it biset subpartition} of $C$ if $\{X_I^i\}_1^{\ell}$ is a subpartition of $C$ into nonempty sets and  $X_W^{i}\subseteq V-C$ for $i =1,\ldots, \ell$. In a dypergraph $\mathcal{D}=(V,\mathcal{A})$,  a dyperedge $a\in \mathcal{A}$ {\it enters} the biset ${\sf X}$  if $tail(a)-X_O\neq \emptyset$ and $head(a) \in X_I$. We use {\boldmath$\rho_{\mathcal{A}}^-({\sf X}$}$)$ for the set of dyperedges in ${\cal A}$ entering $X$ and {\boldmath $d_\mathcal{A}^-({\sf X})$} for $|\rho_{\mathcal{A}}^-({\sf X})|$.

\section{Results}\label{res}

This section introduces all the results considered and shows how our contributions relate to the previous results.

\subsection{Reachability in digraphs}

 The starting point of all studies on packing arborescences is the following theorem of Edmonds \cite{e} on packing spanning arborescences mentioned in a simpler form in the introduction.
 
\begin{theorem}(\cite{e})\label{basic}
Let $D=(V\cup R,A)$ be a simply rooted digraph. Then there exists a packing  $\{B_r\}_{r \in R}$ of  spanning $r$-arborescences  in $D$ if and only if for all $X\subseteq V\cup R$ with $X\cap V \neq \emptyset,$
\begin{equation}\label{edmonds}
d_A^{-}(X)\geq |R-X|.
\end{equation}
\end{theorem}

Theorem \ref{basic} has already been extended in \cite{e} by omitting the simplicity condition.

\begin{theorem}(\cite{e})\label{edstrong}
Let $D=(V\cup R,A)$ be a  rooted digraph. Then there exists a packing  $\{B_r\}_{r \in R}$ of  spanning $r$-arborescences  in $D$ 
if and only if  \eqref{edmonds} holds for all $X\subseteq V\cup R$ with $X\cap V\neq\emptyset$.
\end{theorem}

A simple proof of Theorem \ref{basic} due to Lov\'asz and a slightly more involved similar proof of Theorem \ref{edstrong} due to Frank can be found in \cite{book}. Nevertheless, no way to conclude Theorem \ref{edstrong} from  Theorem \ref{basic} is known.
\medskip

We now turn our attention to packing reachability arborescences.
The following generalization of Theorem \ref{edstrong} is equivalent to the fundamental result of Kamiyama, Katoh and Takizawa \cite{kkt}. In \cite{kkt}, the simply rooted version is presented, however the rooted version can be easily reduced to the simply rooted version. Indeed, by adding for every root $r$, a new  vertex $r'$ and an arc $r'r$, we obtain a simply rooted instance of the problem which is equivalent to the original one.

\begin{theorem}(\cite{kkt}, \cite{book})\label{3japt}
Let $D=(V\cup R,A)$ be a rooted digraph. Then there exists a packing $\{B_r\}_{r \in R}$ of reachability $r$-arborescences   in $D$ if and only if for all $X\subseteq V\cup R$ with $X\cap V\neq\emptyset,$
\begin{equation}\label{3jap}
d_A^{-}(X)\geq |P_X^D\cap R|-|X\cap R|.
\end{equation}
\end{theorem}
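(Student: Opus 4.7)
My approach is to derive Theorem~\ref{3japt} from Theorem~\ref{edstrong} by induction on $|V|$.

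For necessity, I would argue directly by arc-counting: for any $X\subseteq V\cup R$ with $X\cap V\neq\emptyset$, each root $r\in P_X^D\cap R\setminus X$ forces $B_r$ to cross into $X$ along at least one arc (since $B_r$ reaches some vertex of $X\cap U_r^D$ from outside), and these arcs are pairwise distinct by arc-disjointness; combined with $X\cap R\subseteq P_X^D\cap R$, this yields \eqref{3jap}.

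For sufficiency I plan an induction on $|V|$, with the base case $V=\emptyset$ settled by the trivial packing $B_r=(\{r\},\emptyset)$. For the inductive step, I will first look for a \emph{sink} $v^*\in V$, i.e.\ a vertex with $d_A^+(v^*)=0$. The key subclaim is that the arcs entering $v^*$ can be assigned injectively to the roots in $R_{v^*}:=P_{v^*}^D\cap R$ in such a way that the arc $a_r$ assigned to $r$ satisfies $tail(a_r)\in U_r^D\cup r$. I expect to prove this by Hall's theorem, verifying the marriage condition via \eqref{3jap}: for each $S\subseteq R_{v^*}$, the set $W_S:=\bigcup_{r\in S}(U_r^D\cup r)$ is closed under $A$-out-neighbours (because any out-neighbour of a vertex reachable from some $r\in S$ is itself reachable from $r$), so applying \eqref{3jap} to $X:=\{v^*\}\cup((V\cup R)\setminus W_S)$ forces every arc entering $X$ to have head $v^*$ and tail in $W_S$, and a direct evaluation of the right-hand side of \eqref{3jap} gives exactly $|S|$. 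This supplies Hall's condition. Once the arcs $a_r$ are chosen, deleting $v^*$ and $\{a_r\}_{r\in R_{v^*}}$ leaves a smaller rooted digraph on which $d_A^-(X)$, $P_X^D\cap R$ and $X\cap R$ are unchanged for subsets $X$ avoiding $v^*$ (since $v^*$ was a sink), so \eqref{3jap} is preserved; induction then supplies a reachability packing in the smaller digraph, and appending $a_r$ to $B_r$ for each $r\in R_{v^*}$ gives the desired packing in $D$.

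If $D[V]$ contains no sink, a terminal strongly connected component $C$ with $|C|\geq 2$ must be handled first. My plan is to invoke Theorem~\ref{edstrong} on $D[C]$ with a suitable choice of entry arcs into $C$ to simultaneously construct the restriction of each $B_r$ to $C$ (for those $r$ with $C\subseteq U_r^D$), and then contract $C$ to a single sink vertex so that the sink reduction above applies to what remains. The main obstacle I foresee is precisely this terminal-SCC step: one needs to choose the entry arcs carefully and verify the Edmonds-style condition for the in-$C$ packing; the Hall argument in the sink reduction itself should otherwise be routine once one exploits the out-neighbour closure of $W_S$.
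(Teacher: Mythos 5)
Your necessity argument and your sink reduction are both correct; in particular the Hall-type argument is sound (applying \eqref{3jap} to $X=\{v^*\}\cup((V\cup R)\setminus W_S)$ does give exactly $|S|$ arcs into $v^*$ from $W_S$, and deleting the sink preserves \eqref{3jap}). But the proof is incomplete precisely where the theorem is hard: the terminal strongly connected component with $|C|\ge 2$. This case cannot be avoided (e.g.\ $D[V]$ may be strongly connected, so there is no sink at all), and your sketch for it has two problems. First, the ``contract $C$ to a sink and rerun the sink reduction'' step assigns \emph{one} entry arc per root, but an $r$-arborescence spanning $U_r^D$ may meet $C$ in several components and therefore needs several arcs entering $C$, one per component; a one-arc-per-root matching does not encode the required structure. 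Second, ``choose the entry arcs suitably and verify the Edmonds-style condition for $D[C]$ plus those arcs'' is not a routine afterthought — it is the entire content of the reduction, and a greedy or Hall-type choice made before invoking Theorem~\ref{edstrong} interacts with the condition inside $C$ in a way you have not controlled.

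The paper's proof resolves exactly this point by \emph{not} choosing the entry arcs in advance. It builds an auxiliary rooted digraph $D_2$ on $C\cup T\cup R_2$, where $R_2=P_C^D\cap R$ and $T$ has one gadget vertex $t_{uv}$ per arc $uv\in\rho_A^-(C)$, with arcs $rt_{uv}$ whenever $u\in U_r^D$, a single arc $t_{uv}v$, and $|R_2|$ parallel arcs from $v$ back to $t_{uv}$; Theorem~\ref{edstrong} applied to $D_2$ then performs the assignment of entry arcs to roots (including multiple entries per root) as part of producing a spanning packing. The one nontrivial computation is verifying \eqref{edmcond} for $D_2$, which is done by transferring each set $X$ to the set $Z=(X\cap C)\cup Y$ with $Y=(V\cup R)-U^D_{R_2-X}$ and applying \eqref{3jap} to $Z$ — a direct generalization of your $W_S$-closure trick. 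The merge then works because the recursive reachability packing on $D-C$ spans all of $U_r^D-C$, so the tail of every entry arc allotted to $r$ is automatically a vertex of $B_r^1$. If you replace your SCC step by this gadget construction (your sink case then becomes the special case $|C|=1$, where Hall suffices in place of Edmonds), the proof closes.
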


Our first contribution is to show that surprisingly Theorem \ref{edstrong} implies Theorem \ref{3japt}. The very simple inductive proof can be found in Section \ref{red}.

\subsection{Reachability in matroid-rooted digraphs}

We now present generalizations of the concepts above, namely matroid-based packings and matroid-reachability-based packings.  

The first result on matroid-based packings of arborescences is due to Durand de Gevigney, Nguyen and Szigeti \cite{dns}. It is equivalent to the following theorem (see \cite{surveyZ}).

\begin{theorem}(\cite{dns})\label{mat}
Let $(D=(V\cup R,A),\mathcal{M}=(R, r_{\mathcal{M}}))$ be a simply matroid-rooted digraph. Then there exists a matroid-based packing of arborescences in $(D,\mathcal{M})$ if and only if for all  $X \subseteq V\cup R$ with $X\cap V\neq\emptyset$ and $X \cap R=N_D^-(X \cap V)\cap R$,
\begin{equation}\label{matcond}
d_A^{-}(X)\geq r_{\mathcal{M}}(R)-r_{\mathcal{M}}(X \cap R).
\end{equation}
\end{theorem}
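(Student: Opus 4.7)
Suppose a matroid-based packing $\{B_r\}_{r \in R}$ exists, and fix $X$ satisfying the hypotheses. I pick any $v \in X \cap V$ and set $S_v := \{r \in R : v \in V(B_r)\}$, which is by definition a basis of $\mathcal{M}$. For each $r \in S_v \setminus X$, the arborescence $B_r$ contains a directed $rv$-path which must use at least one arc entering $X$; these arcs are pairwise distinct by arc-disjointness of the packing. Since $S_v \cap X$ is independent in $\mathcal{M}$, we have $|S_v \cap X| \le r_{\mathcal{M}}(X \cap R)$, so
\[
d_A^-(X) \;\ge\; |S_v \setminus X| \;=\; r_{\mathcal{M}}(R) - |S_v \cap X| \;\ge\; r_{\mathcal{M}}(R) - r_{\mathcal{M}}(X \cap R).
\]

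\textbf{Sufficiency.} My plan is an induction on $|A| + r_{\mathcal{M}}(R)$. When $r_{\mathcal{M}}(R)=0$ the empty packing works. For the inductive step I distinguish two cases. Either some arc $a \in A$ is \emph{deletable}, meaning \eqref{matcond} still holds in $D - a$, in which case I delete $a$ and apply induction; or no such arc exists, so every arc is contained in $\rho_A^-(X)$ for some \emph{tight} $X$ achieving equality in \eqref{matcond}. In the latter case I would pick a root $r \in R$ that is not a loop in $\mathcal{M}$ and has an outgoing arc $a = (r,u)$ (unique by simplicity), commit $a$ as the first arc of $B_r$, and reduce to the smaller instance obtained by identifying $r$ with $u$ in $D$, removing $a$, and contracting $r$ in $\mathcal{M}$. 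Roots that are loops or have $d_A^+(r) = 0$ would be removed from $\mathcal{M}$ by deletion instead.

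\textbf{Main obstacle.} The hard part will be to show that, when no arc is deletable, a suitable root can always be selected and that \eqref{matcond} is preserved by the reduction. Both points rely on a biset/submodular uncrossing argument on tight sets: for two tight $X_1, X_2$ satisfying the hypotheses, I want to show that appropriately defined intersections and unions still satisfy the side condition $X \cap R = N_D^-(X \cap V) \cap R$ and remain tight. This combines the submodularity of $d_A^-$ with that of $r_{\mathcal{M}}$, using that the side condition makes $X \cap R$ behave compatibly under the set operations. The simplicity of the roots is crucial here: it forces each root's arcs to belong to a single arborescence, so that contracting $r$ in $\mathcal{M}$ faithfully encodes the leftover packing problem after committing $a$. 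Once uncrossing is in hand, one can focus on a minimal tight set and argue that committing an arc leaving the complementary structure neither violates tightness elsewhere nor destroys the matroid-based property.
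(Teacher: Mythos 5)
The paper does not actually prove Theorem \ref{mat}: it is quoted from \cite{dns} and used as a black box to derive Theorem \ref{matbranch}, so there is no in-paper argument to compare yours against. Judged on its own, your necessity argument is correct and standard (note that it does not even use the side condition $X \cap R=N_D^-(X\cap V)\cap R$, which is only needed to make the condition sufficient).

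Your sufficiency direction has a genuine gap, and not only because you explicitly defer the uncrossing lemma. The reduction you propose --- commit $a=(r,u)$ to $B_r$, identify $r$ with $u$, and contract $r$ in $\mathcal{M}$ --- does not encode the residual problem. In a matroid-based packing the basis $\{r\in R: v\in V(B_r)\}$ may differ from vertex to vertex, and $B_r$ is in general far from spanning. Contracting $r$ globally in $\mathcal{M}$ asks every vertex of the reduced instance to be covered by a basis of $\mathcal{M}/r$, which after adding $r$ back forces \emph{every} vertex into $B_r$ --- a strictly stronger requirement that fails on easy feasible instances. For example, let $\mathcal{M}$ have rank one with two parallel non-loop roots $r_1,r_2$ and let $V=\{v_1,v_2\}$ with arcs $r_1v_1$ and $r_2v_2$: condition \eqref{matcond} holds and the packing $B_{r_1}=r_1v_1$, $B_{r_2}=r_2v_2$ is matroid-based, yet after committing $r_1v_1$ and contracting $r_1$ the reduced instance would require $v_2\in V(B_{r_1})$, which is impossible. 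A secondary issue is that after identifying $r$ with $u$ the merged vertex generally has positive in-degree, so the reduced object is no longer a simply matroid-rooted digraph of the same type and the induction hypothesis does not apply to it. The proof in \cite{dns} avoids both problems by instead growing a partial packing one arc at a time (\`a la Lov\'asz's proof of Edmonds' theorem), maintaining as invariant that the cut condition holds for the residual instance relative to the current partial arborescences; the uncrossing of tight sets is then used to show that an augmenting arc always exists. To complete your argument you would need to switch to an invariant of that local, per-vertex kind rather than a global matroid contraction.
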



We  mention that the simplicity condition in Theorem \ref{mat} can be omitted. This result might also be interesting for itself. It plays the same role for matroid-based packings as Theorem \ref{edstrong} played for spanning arborescence packings. 
While it is not known whether Theorem \ref{basic} implies Theorem \ref{edstrong}, 
the stronger matroid setting allows to directly derive the following theorem  from Theorem \ref{mat}.

\begin{theorem}\label{matbranch}
Let $(D=(V\cup R,A),\mathcal{M}=(R, r_{\mathcal{M}}))$ be a matroid-rooted digraph. Then there exists a matroid-based packing of arborescences in $(D,\mathcal{M})$ if and only if \eqref{matcond} holds for all  $X \subseteq V\cup R$ with $X\cap V\neq\emptyset$ and $X \cap R=N_D^-(X \cap V)\cap R$.
\end{theorem}

It is easy to see that Theorem \ref{matbranch} follows from Theorem \ref{mat}. Indeed, we may define a simply matroid-rooted digraph $(D',\mathcal{M}')$ obtained from  $(D,\mathcal{M})$ by replacing every root $r \in R$ by a set $Q_r$ of  $|N_D^+(r)|$  simple roots in the digraph such that $N_{D'}^+(Q_r)=N_D^+(r)$ and by $|Q_r|$ parallel copies of  $r$ in the matroid. Since $(D,\mathcal{M})$ satisfies \eqref{matcond}, so does $(D',\mathcal{M}')$. By Theorem \ref{mat}, there exists  a matroid-based packing of arborescences in $(D',\mathcal{M}')$, from which we obtain, by identifying all vertices of $Q_r$ into $r$ for all $r\in R$,  a matroid-based packing of arborescences  in $(D,\mathcal{M})$.
\medskip

A reachability extension of Theorem \ref{mat} was obtained by Kir\'aly \cite{k}. We deduce  the following slightly stronger version of it from Theorem \ref{matbranch}   in Section \ref{red}.

\begin{theorem}\label{matreach}
Let $(D=(V\cup R,A),\mathcal{M}=(R, r_{\mathcal{M}}))$ be a matroid-rooted digraph. Then there exists a matroid-reachability-based packing of arborescences in $(D,\mathcal{M})$ if and only if for all  $X \subseteq V\cup R$ with $X\cap V\neq \emptyset$ and $X \cap R=N_D^-(X \cap V)\cap R$,
\begin{equation}\label{matreachcond}
d_A^{-}(X)\geq r_{\mathcal{M}}(P_X^D\cap R)-r_{\mathcal{M}}(X \cap R).
\end{equation}
\end{theorem}

\subsection{Generalizations to mixed hypergraphs}

This part deals with another way of generalizing Theorem \ref{basic}: rather than changing the requirements on the packing, one can consider changing the basic objects of consideration from digraphs to more general objects. 
One such generalization was dealt with by Frank, Kir\'aly and Kir\'aly \cite{fkk}. They considered dypergraphs instead of digraphs and they generalized Theorem \ref{basic} to dypergraphs. A  result where the concepts of reachability and dypergraphs were combined was obtained by B\'erczi and Frank in \cite{bf}.
Yet another class Theorem \ref{basic} can be generalized to was considered by Frank in \cite{f}: mixed graphs. 
He gave a characterization of mixed graphs admitting a mixed spanning arborescence packing. 

A natural question now is whether several of the aforementioned generalizations can be combined into a single one. In \cite{fklst}, the authors surveyed all possible combinations of these generalizations and gave an overview of all existing results.  A significant amount of cases was covered  by Fortier et al \cite{fklst}. They first prove a characterization combining the concepts of dypergraphs, matroids and reachability. They further prove a theorem that combines the concepts of matroids, hypergraphs and mixed graphs. This result can be stated as follows in our framework. 
We use it in Section \ref{red} for one of the reductions.

\begin{theorem}\label{matmixhyp}(\cite{fklst})
Let $(\mathcal{H}=(V\cup R,\mathcal{A}\cup \mathcal{E}), \mathcal{M}=(R, r_{\mathcal{M}}))$ be a simply matroid-rooted mixed hypergraph. Then there exists a matroid-based  packing of mixed hyperarborescences in $(\mathcal{H}, \mathcal{M})$  if and only if for every biset subpartition $\{{\sf X}^i\}_1^\ell$ of $V$ with $X^{i}_W=N_\mathcal{H}^-(X_I^i)\cap R$ for $i=1,\ldots,\ell$,
\begin{equation}\label{condmatmixhyp}
e_{\mathcal{E}}(\{X_I^i\}_1^{\ell})+\sum_{i=1}^\ell d^-_{\mathcal{A}}({\sf X}^{i})\geq \sum_{i=1}^\ell(r_{\mathcal{M}}(R)-r_{\mathcal{M}}(X_W^{i})).
\end{equation}
\end{theorem}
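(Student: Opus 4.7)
The plan is to prove Theorem \ref{matmixhyp} by induction on $|\mathcal{E}|$, reducing to a matroid-based packing theorem for dypergraphs, which is a special case of the Fortier et al. characterization combining matroids, dypergraphs and reachability, and which itself generalizes Theorem \ref{matbranch} to dypergraphs.

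For necessity, suppose $\{\mathcal{B}_r\}_{r\in R}$ is a matroid-based packing trimmed to $\{B_r\}_{r\in R}$, and let $\{\mathsf{X}^i\}_1^\ell$ be a biset subpartition with $X_W^i=N_\mathcal{H}^-(X_I^i)\cap R$. Since roots are not incident to hyperedges, $X_W^i\subseteq R$, and no root outside $X_W^i$ is in-adjacent to $X_I^i$. For each $v\in X_I^i$, the roots of arborescences through $v$ form a basis of $R$, yielding $r_\mathcal{M}(R)-r_\mathcal{M}(X_W^i)$ arborescences rooted in $R-X_W^i$; each such arborescence $B_r$ contains a first trimmed arc entering $X_I^i$ whose tail lies outside $X_O^i$, since roots have no in-arcs and hence cannot occur as interior vertices of $B_r$. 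This arc comes from a dyperedge entering $\mathsf{X}^i$ or a hyperedge entering $X_I^i$, and distinct arborescences use distinct edges by feasibility, so summing over $i$ gives \eqref{condmatmixhyp}.

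For sufficiency I argue by induction on $|\mathcal{E}|$. The base case $\mathcal{E}=\emptyset$ reduces, via singleton biset subpartitions with $X_O=X_I\cup(N_\mathcal{H}^-(X_I)\cap R)$, to the matroid-based packing condition for dypergraphs and is handled by the dypergraph analog of Theorem \ref{matbranch}. For the inductive step, pick $e\in\mathcal{E}$. The key orientation lemma I need is: there exists $v\in e$ such that replacing $e$ by the dyperedge $(e-v,v)$ yields a mixed hypergraph $\mathcal{H}'$ still satisfying \eqref{condmatmixhyp}. Induction applied to $(\mathcal{H}',\mathcal{M})$ then produces a matroid-based mixed hyperarborescence packing, which is also valid in $(\mathcal{H},\mathcal{M})$.

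The hard part is the orientation lemma. Assuming the contrary, for each $v\in e$ some biset subpartition $\mathcal{F}^v=\{\mathsf{X}^{v,i}\}$ violates \eqref{condmatmixhyp} in $\mathcal{H}'$. Because $e\cap R=\emptyset$, reorienting $e$ leaves the walls $X_W^{v,i}\subseteq R$ unchanged, so the right-hand side is the same in $\mathcal{H}$ and $\mathcal{H}'$; a violation therefore forces $\mathcal{F}^v$ to be tight in $\mathcal{H}$, $e$ to enter some $X_I^{v,i_v}$, and $(e-v,v)$ to enter no $\mathsf{X}^{v,j}$. The latter forces $v\notin\bigcup_j X_I^{v,j}$, because the alternative $v\in X_I^{v,j}$ with $e-v\subseteq X_O^{v,j}$ together with $e\cap X_W^{v,j}=\emptyset$ would give $e\subseteq X_I^{v,j}$, contradicting that $e$ crosses $X_I^{v,i_v}$. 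One then uncrosses the $\mathcal{F}^v$, exploiting submodularity of $d_\mathcal{A}^-$ on bisets and of $r_\mathcal{M}$ together with the modularity of cardinality, to extract a single biset subpartition of $V$ that violates \eqref{condmatmixhyp} in $\mathcal{H}$, contradicting the hypothesis. The delicate point throughout the uncrossing is preserving the wall condition $X_W=N_\mathcal{H}^-(X_I)\cap R$ after intersecting and unioning inner sets, which is tractable precisely because walls consist only of roots and are therefore disjoint from all hyperedges and from the tails of the affected dyperedges.
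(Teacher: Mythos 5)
A preliminary remark: the paper gives no proof of Theorem \ref{matmixhyp} at all --- it is imported from Fortier et al.\ \cite{fklst} and used as a black box to derive Theorem \ref{matmixhypbranch} --- so your attempt cannot be measured against an in-paper argument and has to stand on its own. Your necessity direction does stand: the walls consist of roots, roots have no entering dyperedges or hyperedges and hence cannot appear as non-root vertices of any $B_r$, so each of the at least $r_{\mathcal{M}}(R)-r_{\mathcal{M}}(X_W^i)$ arborescences meeting $X_I^i$ and rooted outside $X_O^i$ supplies a first trimmed arc entering $X_I^i$ with tail outside $X_O^i$, and the disjointness of the inner sets together with the disjointness of the packing prevents double counting. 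The setup of the sufficiency direction is also sound: orienting one hyperedge at a time, observing that reorientation does not change the walls (since $e\cap R=\emptyset$), and deducing that a failed orientation toward $v$ yields a subpartition $\mathcal{F}^v$ that is tight in $\mathcal{H}$, with $e$ entering some inner set and $v$ in no inner set, are all correct.

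The genuine gap is exactly where you say the hard part is: the uncrossing of the families $\mathcal{F}^v$, $v\in e$, is asserted rather than proved, and it is the entire technical content of the theorem. What is missing is (i) a merge operation producing from several overlapping biset subpartitions a single one whose inner sets are again pairwise disjoint and whose walls again satisfy $X_W=N_{\mathcal{H}}^-(X_I)\cap R$; (ii) a supermodularity estimate showing that the deficiency $\sum_i(r_{\mathcal{M}}(R)-r_{\mathcal{M}}(X_W^i)-d_{\mathcal{A}}^-({\sf X}^i))-e_{\mathcal{E}}(\{X_I^i\})$ does not decrease under the merge except for a correction coming from $e$; and (iii) the combinatorial claim that $e$ is counted once in $e_{\mathcal{E}}$ for each family before merging but at most once afterwards, which must be extracted from the facts that $e$ meets an inner set of each $\mathcal{F}^v$ in a vertex other than $v$ --- note that the relevant inner sets of $\mathcal{F}^v$ and $\mathcal{F}^u$ need not overlap, so it is not even clear that they coalesce. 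Tracking $d_{\mathcal{A}}^-$ on bisets, $e_{\mathcal{E}}$ on subpartitions and $r_{\mathcal{M}}$ on walls simultaneously through such a merge is precisely why published proofs of this statement invoke a global hypergraph-orientation theorem (in the spirit of Frank, Kir\'aly and Kir\'aly \cite{fkk}) rather than orienting one hyperedge at a time; your sketch does not establish that the edge-by-edge route closes. Two smaller points: the base case is itself the nontrivial dypergraph version of Theorem \ref{mat} and should be cited as such rather than as ``the dypergraph analog of Theorem \ref{matbranch}'' --- for the simply rooted statement you only need the simply rooted dypergraph theorem, and Theorem \ref{matbranch} is downstream of the present theorem in this paper, so your phrasing invites a circularity.
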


\subsection{Reachability in mixed graphs}

Theorem \ref{matmixhyp} had a lot of corollaries generalizing Theorem \ref{basic}, however, the cases of combinations including both reachability and mixed graphs remained open. They seemed hard to deal with as all natural generalizations failed. Indeed, it turned out that the remaining cases required a deeper concept, namely the use of bisets. While the use of bisets in our statement of Theorem \ref{matmixhyp} is only for convenience, it is essential in the following theorems. 
\medskip

The following theorem is equivalent to the result of  Matsuoka and Tanigawa \cite{mt} on reachability mixed arborescence packing, as it was shown in  \cite{gy}. 

\begin{theorem}(\cite{mt})\label{mixreach}
Let $H=(V\cup R,A\cup E)$ be a rooted mixed graph. Then there exists a packing of reachability mixed $r$-arborescences  $\{B_r\}_{r \in R}$ in $H$ if and only if for every biset subpartition $\{{\sf X}^i\}_1^\ell$ of a strongly connected component $C$ of $H-R$ such that $X_W^i=P^H_{X_W^i}$ 
 for all $i=1,\ldots,\ell,$
\begin{equation*}
e_{E}(\{X_I^i\}_1^{\ell})+\sum_{i=1}^\ell d^-_{A}({\sf X}^{i})\geq \sum_{i=1}^\ell (|P^H_C\cap R|-|X_W^{i}\cap R|).
\end{equation*}
\end{theorem}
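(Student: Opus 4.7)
The plan is to mirror the inductive template the paper uses to derive Theorem~\ref{3japt} from Theorem~\ref{edstrong} and Theorem~\ref{matreach} from Theorem~\ref{matbranch}: reduce the reachability statement to a spanning/matroid-based one in which the underlying matroid is free on the ``relevant'' set of roots, so that Theorem~\ref{matmixhyp} can be applied as a black box.

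Necessity is a standard counting argument. Fix a reachability packing $\{B_r\}_{r\in R}$ and a biset subpartition $\{{\sf X}^i\}_1^\ell$ of a strongly connected component $C$ of $F-R$ with $X_W^i=P^F_{X_W^i}$ for all $i$. For each $i$ and each $r\in (P_C^F\cap R)-(X_W^i\cap R)$, the arborescence $B_r$ must reach every vertex of $X_I^i\subseteq C\subseteq U_r^F$; since $r\notin X_O^i$, it must enter ${\sf X}^i$. Each such entry uses either a dyperedge in $\rho_A^-({\sf X}^i)$ or a mixed edge of $E$ whose chosen orientation enters $X_I^i$, and the latter are bounded by $e_E(\{X_I^i\}_1^\ell)$ because each edge can serve at most one biset of the subpartition. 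Summing over $i$ gives the inequality.

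For sufficiency I would induct on $|V|+|R|$. Pick a sink strongly connected component $C$ of $F-R$ and set $R_0=P_C^F\cap R$. In the \emph{spanning case}, $V=C$ and $R_0=R$, so reachability packing coincides with spanning packing, and Theorem~\ref{matmixhyp} applied to $F$ with the free matroid on $R$ yields the packing: its inequality $e_E(\{X_I^i\}_1^\ell)+\sum_{i=1}^\ell d_A^-({\sf X}^i)\ge\sum_{i=1}^\ell(|R|-|X_W^i|)$ matches the hypothesis because $|P_C^F\cap R|=|R|$ in this case. In the \emph{recursive case}, either there is a root $r\in R-R_0$ (which cannot reach $C$), or some SCC of $F-R$ is strictly contained in $V$. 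In the former, delete $r$ and the dyperedges leaving $r$, apply induction to build the packing for $R-r$, then handle $B_r$ separately on $F[U_r^F\cup r]$ via another inductive call; in the latter, use $C$ as a pivot, apply induction to $F[C\cup R_0]$ to get the sub-arborescences reaching $C$, and combine with an inductive call on the mixed graph obtained by contracting $C$.

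The main obstacle will be ensuring that the hypothesis of the theorem is preserved under the chosen reduction. The wall-closure constraint $X_W^i=P^F_{X_W^i}$ is delicate: bisets of the reduced instance do not automatically lift to bisets of $F$ with the same closure, and conversely, removing dyperedges incident to a deleted root may destroy a biset certificate on the remainder. I would handle this by first applying a root-splitting preprocessing (as used throughout the paper) to reduce to simple roots, so that deleting a root subtracts the same amount from both sides of every biset inequality; and by exploiting that the sink property of $C$ together with the wall-closure condition prevents arcs leaving $C$ from interfering, so that contracting $C$ preserves the reachability predecessors of every wall. Gluing the recursive subpackings into a single reachability packing and verifying that the inductive hypothesis can in fact be invoked on each reduced instance is where the substantive technical work sits.
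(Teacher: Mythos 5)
Your necessity argument is correct and is essentially the one the paper uses for the general Theorem~\ref{matmixhypreach}. Note first that the paper never proves Theorem~\ref{mixreach} directly: it is obtained as the free-matroid, mixed-graph specialization of Theorem~\ref{matmixhypreach} (via Corollary~\ref{mixhypreach}), whose proof is the inductive template you are trying to imitate. Your sufficiency plan, however, deviates from that template at the decisive point and the deviation does not work. The paper's scheme is: one recursive call on $F-C$ (not on $F[C\cup R_0]$ and not on a contraction), followed by a single application of the spanning black box to an \emph{auxiliary} instance $D_2$ built on $C\cup T\cup R_2$, where $T$ contains a gadget vertex $t_{uv}$ for each arc $uv$ entering $C$, there is an arc $rt_{uv}$ whenever $u$ is reachable from $r$ (equivalently, whenever the already-constructed arborescence $B^1_r$ in $F-C$ contains $u$), one arc $t_{uv}v$, and $|R_2|$ parallel arcs $vt_{uv}$. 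This gadget is what encodes the interface between the two subproblems; it is what makes the cut condition of the black box verifiable from condition \eqref{4cond} on $F$ (Claim~\ref{condforthm2} / Proposition~\ref{conddxdyhyp}) and what makes the merged objects arc-disjoint arborescences.

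Your two substitutes both fail concretely. First, $F[C\cup R_0]$ deletes every root-to-$C$ path that passes through $V-C$, which is the generic situation when $C\subsetneq V$; a root $r\in R_0$ may have no arc into $C$ at all, so no reachability packing exists in $F[C\cup R_0]$ and the inductive hypothesis cannot be invoked there. Second, contracting $C$ records neither which entering arcs of $C$ each root can use nor the requirement that \emph{every} vertex of $C$ be reached by \emph{every} root of $R_0$; and nothing forces the packing found in $F/C$ to be arc-disjoint from the packing found inside $C$. The same disjointness problem afflicts your treatment of a root $r\in R-R_0$: building $B_r$ "separately on $F[U_r^F\cup r]$" ignores that this subgraph shares arcs and mixed edges with the instance used for the other roots, so the union need not be a packing. (Two smaller points: after root-splitting the matroid on the split copies has parallel elements and is no longer free, so you must invoke Theorem~\ref{matmixhypbranch} rather than Theorem~\ref{matmixhyp} with a free matroid; and in the mixed setting the auxiliary instance must also decide orientations of the edges of $E$ crossing into $C$, which the paper handles by keeping them as hyperedge/dyperedge data in $\mathcal{H}_2$.) To repair the proof, replace your recursive case by the paper's construction: recurse on $F-C$, build $D_2$ as above from the resulting packing, verify its cut condition by pulling each cut $X$ of $D_2$ back to the biset subpartition $\{{\sf Z}^i\}$ of $C$ with walls $Y=(V\cup R)-U^F_{R_2-X}$, and merge.
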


In \cite{mt}, Matsuoka and Tanigawa suggest a possible generalization of Theorem \ref{mixreach} to the matroidal case. Such a generalization was proven by Gao and Yang \cite{gy} using a slightly different method than the one suggested in \cite{mt}.
They proved the following result.

\begin{theorem}(\cite{gy})\label{matmixreach}
Let $(H=(V\cup R,A\cup E),\mathcal{M}=(R,r_{\mathcal{M}}))$ be a matroid-rooted mixed graph. Then there exists a matroid-reachability-based packing of mixed arborescences  in $(H,\mathcal{M})$ if and only if for every biset subpartition $\{{\sf X}^i\}_1^\ell$ of a strongly connected component $C$ of $H-R$ such that $X_W^i=P^H_{X_W^i}$ for all $i=1,\ldots,\ell,$
\begin{equation*}
e_{E}(\{X_I^i\}_1^{\ell})+\sum_{i=1}^\ell d^-_{A}({\sf X}^{i})\geq \sum_{i=1}^\ell (r_{\mathcal{M}}(P^H_C\cap R)-r_{\mathcal{M}}(X_W^{i}\cap R)).
\end{equation*}
\end{theorem}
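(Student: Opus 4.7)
The plan is to prove Theorem \ref{matmixreach} by induction on $|V|$, in analogy with the inductive schemes used for Theorems \ref{3japt} and \ref{matreach}, this time combining them with the non-reachability result Theorem \ref{matmixhyp}. Necessity is a standard counting argument: for any admissible biset subpartition, each of the $r_\mathcal{M}(P^F_C \cap R) - r_\mathcal{M}(X_W^i \cap R)$ arborescences whose roots extend $X_W^i \cap R$ to a basis of $P^F_C \cap R$ must contribute an arc or edge entering ${\sf X}^i$, and summing over $i$ gives the claimed inequality.

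For sufficiency, I would identify a strongly connected component $C$ of $F - R$ that is a sink in $F - R$ (no vertex of $V - C$ is reachable from $C$ in $F - R$) and set $R_C = P^F_C \cap R$. By the sink property and the strong connectivity of $C$, $P^F_v \cap R = R_C$ for every $v \in C$, so on $C$ the matroid-reachability-based condition coincides with the matroid-based condition with respect to $\mathcal{M}|R_C$. I would then construct an auxiliary matroid-rooted mixed graph $(F', \mathcal{M}')$ on vertex set $C \cup R_C \cup R'$, where $R'$ consists of one new simple root $r_a$ for each arc or edge $a$ of $F$ between a non-root vertex $u_a \in V - C$ and a vertex of $C$. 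All arcs and edges of $F$ within $C \cup R_C$ are retained, and each such $a$ is replaced in $F'$ by a corresponding arc or edge incident with $r_a$. The matroid $\mathcal{M}'$ is defined so that each $r_a$ lies in the $\mathcal{M}$-span of $R_C^{u_a} := \{r \in R_C : r \text{ reaches } u_a \text{ in } F\}$; explicitly,
\[
r_{\mathcal{M}'}(X) \;=\; r_{\mathcal{M}}\!\left((X \cap R_C) \cup \bigcup_{r_a \in X \cap R'} R_C^{u_a}\right),
\]
so that in particular $r_{\mathcal{M}'}(R_C \cup R') = r_\mathcal{M}(R_C)$.

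The condition of Theorem \ref{matmixhyp} on $(F', \mathcal{M}')$ is verified by translating every admissible biset subpartition of $C$ in $F'$ into a biset subpartition of $C$ in $F$ whose walls are the predecessor-closures in $F$ of $(X_W^i \cap R_C) \cup \{u_a : r_a \in X_W^i \cap R'\}$, and then invoking the assumed condition of Theorem \ref{matmixreach}. By construction of $\mathcal{M}'$ the right-hand sides coincide, and the left-hand sides are equal because every $V - C$ tail of an arc entering $X_I^i$ belongs to the extended wall. Theorem \ref{matmixhyp} then yields a matroid-based packing in $(F', \mathcal{M}')$, and the induction hypothesis applied to $F[(V - C) \cup R]$ with $\mathcal{M}$ yields a matroid-reachability-based packing on $V - C$. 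The two packings are combined by reinterpreting each proxy arc $(r_a, v)$ used in the $F'$-packing as the original arc $(u_a, v)$ of $F$ and by aligning, via the exchange property in $\mathcal{M}|R_C$, the root replacing $r_a$ in $v$'s basis with a root in $u_a$'s basis from the inductive $V - C$ packing.

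The main obstacle will be this final consistency step: for every used proxy arc $(r_a, v)$ one needs a single root $r \in R_C^{u_a}$ that can simultaneously replace $r_a$ in $v$'s basis in $\mathcal{M}'$ and belong to the basis covering $u_a$ in the inductive packing, so that the combined $B_r$ is a genuine arborescence. Handling this alignment via the matroid exchange property—perhaps by processing the used proxy arcs in a carefully chosen order, or by setting up a Hall-type matching between the two families of bases—will be the technical heart of the proof, and is what distinguishes this mixed-graph-and-reachability setting from the simpler frameworks of Theorems \ref{3japt} and \ref{matreach}.
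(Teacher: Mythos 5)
Your high-level scheme --- peel off a sink strongly connected component $C$ of $F-R$, recurse on $F-C$, build an auxiliary matroid-based instance on $C$, and merge --- is the same as the one the paper uses (it proves this statement as the mixed-graph case of Theorem \ref{matmixhypreach}, reduced to Theorem \ref{matmixhypbranch}). The necessity argument is also essentially the paper's. But there are two genuine problems in your sufficiency argument. First, the explicit formula you give for $r_{\mathcal{M}'}$ is not a matroid rank function: it is not subcardinal, since $r_{\mathcal{M}'}(\{r_a\})=r_{\mathcal{M}}(R_C^{u_a})$ can exceed $1$. You would need a genuine principal extension (with the appropriate $\min$ in the rank formula), and with several added elements this already requires care. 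Second, and more importantly, the ``alignment'' step that you explicitly leave open is not a technicality --- it is the crux, and as set up it may not be resolvable: the recursive packing covers $u_a$ only with the roots in the particular basis $\{r\in R: u_a\in V(B^1_r)\}$ of $P^F_{u_a}\cap R$, while your $\mathcal{M}'$ only records the span of $R_C^{u_a}$; a single $r_a$-arborescence of the $F'$-packing covers many vertices $v\in C$ with different bases $S_v$, and you need one root $r$ that simultaneously keeps every $(S_v-r_a)+r$ independent \emph{and} satisfies $u_a\in V(B^1_r)$. No argument is offered that such an $r$ exists, and a Hall-type matching across all proxy roots at once would additionally have to respect arc-disjointness of the glued arborescences.

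The paper sidesteps this entirely by reversing the dependence: it first computes the reachability packing $\{B^1_r\}_{r\in R}$ on $F-C$, and only then builds the auxiliary instance, whose root set is just $R_2=P^F_C\cap R$ with matroid $\mathcal{M}|R_2$ (no new matroid elements). Each boundary arc $a$ entering $C$ becomes an ordinary internal vertex $t_a$ with one arc $t_a\,head(a)$, with $r_{\mathcal{M}_2}(R_2)$ parallel return arcs $head(a)\,t_a$ (so that $t_a$ can always be covered by a full basis without using $t_a\,head(a)$), and with an arc $r\,t_a$ \emph{only} for those roots $r$ whose $B^1_r$ already contains the tail of $a$. Consequently, if the arborescence $B^2_r$ of the auxiliary packing uses $t_a\,head(a)$, it must have entered $t_a$ via $r\,t_a$, so $tail(a)\cap V(B^1_r)\neq\emptyset$ automatically, and gluing requires no exchange argument at all. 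If you want to keep your route, the fix is to make your auxiliary instance depend on the recursive packing in this way rather than on reachability sets; as written, the proof has a gap exactly where you predicted it would.
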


\subsection{New results}

The remaining open problems were the generalizations of Theorems \ref{mixreach} and \ref{matmixreach} to mixed hypergraphs. Proving such generalizations is another contribution of this article. While such a result can be obtained by the proof technique used by Gao and Yang \cite{gy} for Theorem \ref{matmixreach}, we follow a different approach: we derive such a characterization from Theorem \ref{matmixhyp}. Again, we first show that the simplicity condition in Theorem \ref{matmixhyp} can be omitted.

\begin{theorem}\label{matmixhypbranch}
Let $(\mathcal{H}=(V\cup R,\mathcal{A}\cup \mathcal{E}), \mathcal{M}=(R, r_{\mathcal{M}}))$ be a matroid-rooted mixed hypergraph. Then there exists a matroid-based packing of mixed hyperarborescences  in $(\mathcal{H}, \mathcal{M})$ if and only if \eqref{condmatmixhyp} holds for every biset subpartition $\{{\sf X}^i\}_1^\ell$ of $V$ with 
$X_W^i=N_\mathcal{H}^-(X^{i}_I)\cap R$ for $i=1,\ldots,\ell$.
\end{theorem}

Theorem \ref{matmixhypbranch}  allows us to derive the following new theorem. Observe that this is a common generalization of all the theorems mentioned before in this article. It includes all the theorems surveyed in \cite{fklst}. 

\begin{theorem}\label{matmixhypreach}
Let $(\mathcal{H}=(V\cup R, \mathcal{A}\cup \mathcal{E}),\mathcal{M}=(R,r_{\mathcal{M}}))$ be a matroid-rooted mixed hypergraph. Then there exists a matroid-reachability-based packing of mixed hyperarborescences  in $(\mathcal{H}, \mathcal{M})$ if and only if for every biset subpartition $\{{\sf X}^i\}_1^\ell$ of a strongly connected component $C$ of $\mathcal{H}-R$  such that $X_W^i=P^{\mathcal{H}}_{X_W^i}$ for all $i=1,\ldots,\ell,$
\begin{equation}\label{4cond}
e_{\mathcal{E}}(\{X_I^i\}_1^{\ell})+\sum_{i=1}^\ell d^{-}_{\mathcal{A}}({\sf X}^{i})\geq \sum_{i=1}^\ell(r_{\mathcal{M}}(P^{\mathcal{H}}_C\cap R)-r_{\mathcal{M}}(X^{i}_W\cap R)).
\end{equation}
\end{theorem}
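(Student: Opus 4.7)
The plan is to establish necessity via a counting argument and sufficiency by an induction that reduces the problem to Theorem \ref{matmixhypbranch} on a source strongly connected component. For necessity, fix a matroid-reachability-based packing $\{\mathcal{B}_r\}_{r\in R}$ with trimmings $\{B_r\}_{r\in R}$ and a biset subpartition $\{{\sf X}^i\}_1^\ell$ of a strongly connected component $C$ satisfying $X_W^i = P^{\mathcal{H}}_{X_W^i}$. Since $P_v^{\mathcal{H}} = P_C^{\mathcal{H}}$ for every $v \in C$, the set of roots of arborescences containing any $v \in X_I^i$ is a basis of $P_C^{\mathcal{H}}\cap R$ in $\mathcal{M}$, so at least $r_{\mathcal{M}}(P_C^{\mathcal{H}}\cap R) - r_{\mathcal{M}}(X_W^i\cap R)$ of these roots lie outside $X_W^i$. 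The hypothesis $X_W^i = P^{\mathcal{H}}_{X_W^i}$ forbids any such root $r$ from reaching $X_W^i$, so the unique path from $r$ to $v$ in $B_r$ enters $X_O^i$ for the first time at a vertex of $X_I^i$, contributing either a distinct dyperedge entering ${\sf X}^i$ or a distinct trimmed hyperedge entering $X_I^i$. Summing over $i$ and using arc-disjointness of the packing together with the pairwise disjointness of the $X_I^i$'s yields (\ref{4cond}).

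For sufficiency, I would argue by induction on $|V|$, the base case $V = \emptyset$ being trivial. In the inductive step, select a source strongly connected component $C_0$ of $\mathcal{H}-R$, put $R_0 = P_{C_0}^{\mathcal{H}}\cap R$, and consider the sub-instance $(\mathcal{H}_0, \mathcal{M}_0) := (\mathcal{H}[C_0\cup R_0], \mathcal{M}|R_0)$. The key lemma is that $(\mathcal{H}_0, \mathcal{M}_0)$ satisfies the hypothesis of Theorem \ref{matmixhypbranch}: given any biset subpartition $\{{\sf Y}^j\}$ of $C_0$ in $\mathcal{H}_0$ with $Y_W^j = N^-_{\mathcal{H}_0}(Y_I^j)\cap R_0$, I would lift it to a biset subpartition $\{{\sf X}^j\}$ in $\mathcal{H}$ by setting $X_I^j = Y_I^j$ and $X_W^j = P^{\mathcal{H}}_{Y_W^j}$. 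This lifted biset satisfies $X_W^j = P^{\mathcal{H}}_{X_W^j}$, and since no dyperedge or hyperedge of $\mathcal{H}$ ends at a root, one has $X_W^j\cap R = Y_W^j$. Because $C_0$ is a source of $\mathcal{H}-R$ and, as a consequence of bidirectional trimming, every hyperedge meeting $C_0$ lies entirely inside $C_0$, every dyperedge or hyperedge of $\mathcal{H}$ entering $Y_I^j$ already lies in $\mathcal{H}_0$. Enlarging the wall from $Y_W^j$ to $X_W^j$ can only reduce the number of entering dyperedges, so (\ref{4cond}) applied to $\{{\sf X}^j\}$ implies (\ref{condmatmixhypbranch}) for $\{{\sf Y}^j\}$, and Theorem \ref{matmixhypbranch} yields a matroid-based packing $\{\mathcal{B}_r^0\}_{r\in R_0}$ covering $C_0$.

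To finish, I would form the reduced instance obtained by deleting the vertices of $C_0$ and the dyperedges and hyperedges used in $\{\mathcal{B}_r^0\}$, verify that (\ref{4cond}) continues to hold on this smaller instance, apply the induction hypothesis to obtain a matroid-reachability-based packing there, and paste the two partial packings together into one for $(\mathcal{H}, \mathcal{M})$. The main obstacle lies in this combining step: one must argue carefully that deleting $C_0$ preserves the reachability condition on every remaining strongly connected component, and that each sub-hyperarborescence $\mathcal{B}_r^0$ for $r \in R_0$ can be extended through the dyperedges exiting $C_0$ into a reachability hyperarborescence spanning $U_r^{\mathcal{H}}$ so that at every $v \in V\setminus C_0$ the roots of the arborescences containing $v$ form a basis of $P_v^{\mathcal{H}}\cap R$ in $\mathcal{M}$. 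Managing the interaction between the sub-packing and the inductive packing, possibly by introducing auxiliary roots that model exits from $C_0$, is where the technical difficulty of the argument is concentrated.
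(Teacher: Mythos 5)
Your necessity argument is essentially the paper's: count the roots $R_i$ outside $X_W^i$ whose arborescences meet $X_I^i$, bound $|R_i|$ from below by submodularity, and observe that since $X_W^i=P^{\mathcal{H}}_{X_W^i}$ nothing enters the wall, so each such arborescence contributes a distinct dyperedge entering ${\sf X}^i$ or hyperedge entering $X_I^i$. That half is fine.

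The sufficiency argument has a genuine gap, and it is exactly the step you flag as ``the main obstacle.'' You peel off a \emph{source} strongly connected component $C_0$ of $\mathcal{H}-R$, solve it via Theorem \ref{matmixhypbranch}, and then want to induct on $\mathcal{H}$ with $C_0$ deleted. Two things go wrong. First, deleting $C_0$ changes the reachability structure of what remains: a vertex $v\in V\setminus C_0$ may be reachable from a root $r$ only through $C_0$, so in the reduced instance $P_v$ shrinks and the inductive hypothesis delivers a packing that is matroid-reachability-based with respect to the \emph{wrong} sets $P_v\cap R$; it will not place $v$ in an $r$-rooted arborescence even though the original instance requires it. Second, the arborescences $B_r^0$ on $C_0$ are fixed before you know how they must continue beyond $C_0$; since $V(B_r^0)$ is in general a proper subset of $C_0$, a dyperedge leaving $C_0$ can be used to extend $B_r$ only if its tail meets $V(B_r^0)$, and there is no mechanism in your reduction guaranteeing that the inductive packing on the rest can be attached to these prescribed partial trees. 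Introducing ``auxiliary roots that model exits from $C_0$'' would change both the matroid and the reachability sets, and you have not shown the induction survives this.

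The paper resolves precisely this difficulty by reversing the order of the decomposition: it takes $C$ to be a \emph{sink} component of $\mathcal{H}-R$ (no dyperedge and no hyperedge leaving). Then $\mathcal{H}_1=\mathcal{H}-C$ satisfies $P_v^{\mathcal{H}_1}=P_v^{\mathcal{H}}$ for all remaining $v$, so the minimal-counterexample hypothesis gives a correct matroid-reachability-based packing $\{B_r^1\}$ there first. Only afterwards is the auxiliary instance $(\mathcal{H}_2,\mathcal{M}_2)$ for $C$ built, and crucially its structure \emph{depends on the packing already found}: for each dyperedge $a$ entering $C$ a new vertex $t_a$ is introduced, and the arc $rt_a$ is present exactly when $tail(a)\cap V(B_r^1)\neq\emptyset$, so that the matroid-based packing on $(\mathcal{H}_2,\mathcal{M}_2)$ obtained from Theorem \ref{matmixhypbranch} automatically attaches to the existing arborescences at legitimate points. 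Your source-first order cannot encode this dependency, because the ``outer'' packing does not yet exist when the ``inner'' one is built. To repair your proof you would essentially have to invert the decomposition and reproduce the paper's $t_a$-gadget, including the verification (the analogue of Claim \ref{condforthm11}) that the auxiliary instance inherits condition \eqref{condmatmixhypbranch} from \eqref{4cond}, which is the technical heart of the argument and is absent from your proposal.
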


We obtain the only remaining case, a generalization of Theorem \ref{mixreach} to mixed hypergraphs as a corollary  by applying Theorem \ref{matmixhypreach} to the free matroid.

\begin{corollary}\label{mixhypreach}
Let $\mathcal{H}=(V\cup R, \mathcal{A}\cup \mathcal{E})$ be a rooted mixed hypergraph. Then there exists a packing of reachability mixed $r$-hyperarborescences  $\{\mathcal{B}_r\}_{r \in R}$ in $\mathcal{H}$ if and only if for every biset subpartition $\{{\sf X}^i\}_1^\ell$ of a strongly connected component $C$ of $\mathcal{H}-R$  such that $X_W^i=P^{\mathcal{H}}_{X_W^i}$ for all $i=1,\ldots,\ell,$
\begin{equation*}\label{4corcond}
e_{\mathcal{E}}(\{X_I^i\}_1^{\ell})+\sum_{i=1}^\ell d^{-}_{\mathcal{A}}({\sf X}^{i})\geq \sum_{i=1}^\ell(|P^{\mathcal{H}}_C\cap R|-|X^{i}_W\cap R|).
\end{equation*}
\end{corollary}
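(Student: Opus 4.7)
The plan is to obtain Corollary \ref{mixhypreach} as a direct specialization of Theorem \ref{matmixhypreach} to the free matroid on $R$, i.e.\ the matroid $\mathcal{M} = (R, r_{\mathcal{M}})$ with $r_{\mathcal{M}}(S) = |S|$ for every $S \subseteq R$. I would first apply Theorem \ref{matmixhypreach} to the pair $(\mathcal{H}, \mathcal{M})$ with this choice of matroid, and then verify that on the two sides of the equivalence the matroidal statements collapse to the combinatorial statements in the corollary.

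For the right-hand side (the condition), this is immediate: substituting $r_{\mathcal{M}}(S) = |S|$ into (\ref{4cond}) gives exactly (\ref{4corcond}). For the left-hand side (the existence statement), I would verify that a matroid-reachability-based packing of mixed hyperarborescences in $(\mathcal{H}, \mathcal{M})$ under the free matroid is precisely a reachability mixed $r$-hyperarborescence packing $\{\mathcal{B}_r\}_{r \in R}$ as defined in the corollary. Indeed, in the free matroid the only basis of any subset $S \subseteq R$ is $S$ itself, so the requirement that $\{r \in R : v \in V(B_r)\}$ be a basis of $P^{\mathcal{H}}_v \cap R$ in $\mathcal{M}$ becomes the equality $\{r \in R : v \in V(B_r)\} = P^{\mathcal{H}}_v \cap R$. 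Equivalently, for every $r \in R$ the trimmed arborescence $B_r$ contains exactly the vertices $v$ with $r \in P^{\mathcal{H}}_v$, that is $V(B_r) = U^{\mathcal{H}}_r$, which is precisely the definition of $\mathcal{B}_r$ being a reachability mixed $r$-hyperarborescence.

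Since both translations are purely definitional, there is essentially no obstacle to overcome; the only point that deserves a brief check is that the definition of \emph{reachability mixed $r$-hyperarborescence} via trimming is compatible with the trimming-based definition of a matroid-reachability-based packing, but this holds by construction. With these two verifications in place, Theorem \ref{matmixhypreach} applied to the free matroid yields both directions of Corollary \ref{mixhypreach}.
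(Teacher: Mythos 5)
Your proposal is correct and is exactly the paper's approach: the paper obtains Corollary \ref{mixhypreach} by applying Theorem \ref{matmixhypreach} to the free matroid on $R$, and your verification that the rank function $r_{\mathcal{M}}(S)=|S|$ turns \eqref{4cond} into \eqref{4corcond} and that "basis of $P^{\mathcal{H}}_v\cap R$" collapses to $V(B_r)$ spanning $U^{\mathcal{H}}_r$ is precisely the (routine) translation the paper leaves implicit.
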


\section{Reductions}\label{red}

This section contains the proofs of the old and new theorems that we mentioned before. All the proofs work by reductions from the spanning versions to the reachability versions.

\subsection{Proof of Theorem \ref{3japt}}\label{lfbdjhfvhd}

The proof uses Theorem \ref{edstrong} and is self-contained otherwise. 

\begin{proof}(of {\bf Theorem \ref{3japt}})
Necessity is evident. For sufficiency, let {\boldmath $D=(V\cup R,A)$} be a minimum counterexample with respect to $|V|$. Obviously, $V \neq \emptyset$.  Let {\boldmath $C$} $\subseteq V$ be the vertex set of a strongly connected component of $D$ that has no arc leaving. Since each $r\in R$ is a root, $C$ exists. Note that each vertex of $C$ is reachable in $D$ from the same set {\boldmath $R_2$} of roots in $R$ since $D[C]$ is strongly connected. We can hence divide  the problem into two subproblems, a smaller one on reachability arborescence packing  and one on spanning arborescence packing.

Let the rooted digraph {\boldmath $D_1$} be equal to $D-C.$
Since no arc leaves $C$ in $D$ and $D$ satisfies \eqref{3jap}, so does $D_1.$ Hence, by the minimality of $D$, $D_1$ has a reachability arborescence packing $\{${\boldmath $B^1_r$}$\}_{r\in R}$.

We now create a second rooted digraph {\boldmath $D_2$} $=(V_2\cup R_2,A_2)$. Let {\boldmath$V_2$} $=C\cup T$ where {\boldmath $T$} $=\{${\boldmath $t_{uv}$}$:uv \in \rho_A^{-}(C)\}$ is a set of new vertices.  Let {\boldmath $A_2$} contain $A(D[C])$, $\{rt_{uv}:r\in R_2, u\in U^D_r, t_{uv} \in T\}$ and for all $t_{uv} \in T,$  one arc from $t_{uv}$ to $v$ and $|R_2|$ arcs from $v$ to $t_{uv}$.

\begin{lemma}\label{d''pack}
$D_2$ satisfies \eqref{edmonds}.
\end{lemma}

\begin{proof}
Let $X \subseteq V_2\cup R_2$ with $X\cap V_2\neq \emptyset$. We must show  that  $d_{A_2}^{-}(X)\geq |R_2-X|$.
If $X \cap C= \emptyset$, then, by $X\cap V_2\neq\emptyset$,  there exists  some $t_{uv} \in X\cap T$. It follows that 
 $d_{A_2}^{-}(X)\geq d_{A_2}(v,t_{uv})=|R_2|\ge |R_2-X|$. If $X \cap C\neq\emptyset,$ then, since $D[C]$ is strongly connected, $R_2=P^D_C\cap R=P^D_{X\cap C}\cap R$. Let {\boldmath $Y$} $=(V\cup R)-{U}^D_{R_2-X},$  {\boldmath $Z$} $= (X \cap C)\cup Y$ and {\boldmath $uv$} $\in \rho^{-}_A(Z)$. Since $u\notin Y$, we get $u\in {U}^D_{R_2-X},$ so  $u \in U^{D}_{\bar r}$ for some {\boldmath ${\bar r}$} $\in R_2-X$. Since $\rho^-_A(Y)=\emptyset$, we have $v \in X \cap C$.  If $u \in C$, then $u\in C-Z\subseteq C-X,$ so $uv \in \rho_{A_2}^{-}(X)$. Otherwise, $t_{uv}\in T$, so ${\bar r}t_{uv}, t_{uv}v\in A_2$. Since $v\in X$ and ${\bar r}\notin X,$ ${\bar r}t_{uv}$ or $t_{uv}v\in\rho_{A_2}^-(X)$. Note that for distinct arcs in $A$ entering $Z$ distinct arcs in $A_2$ entering $X$ have been found above. Thus, by \eqref{3jap},  $d_{A_2}^{-}(X)\geq  d_A^{-}(Z)\geq |P^D_Z\cap R|-|Z\cap R|=|(P^D_{X\cap C}\cup P^D_Y)\cap R|-|((X\cap C)\cup Y)\cap R|=|R|-|R-(R_2-X)|=|R_2-X|.$
\end{proof}

By Lemma \ref{d''pack} and Theorem \ref{edstrong}, $D_2$ has a spanning arborescence packing  $\{${\boldmath $B^2_r$}$\}_{r\in  R_2}$. 

With the help of the packings $\{B^1_r\}_{r \in R}$ in $D_1$ and $\{B^2_r\}_{r \in R_2}$ in $D_2$, we construct a reachability arborescence packing in $D$ yielding a contradiction.
%
%
For all $r\in R-R_2,$ let {\boldmath $B_r$} $=B^1_r$  and  for all $r\in R_2$, let {\boldmath $B_r$} be obtained from the union of $B_r^1$ and $B^2_r-(R_2\cup T)$  by adding the arc $uv$ for all $t_{uv}v \in A(B^2_r)$. It is easy to see that $\{B_r\}_{r \in R}$ is a reachability arborescence packing in $D$ which yields a contradiction.
 \end{proof}
 
\subsection{Proof of Theorem \ref{matreach}}

In this section, 
we show how to derive Theorem \ref{matreach} from Theorem \ref{matbranch}. The role of Theorem \ref{matbranch} in the proof is similar to the role of Theorem \ref{edstrong} in the proof of Theorem \ref{3japt}. While the proof contains similar ideas to the ones in the proof of Theorem \ref{3japt}, it is somewhat more technical.

\begin{proof}(of {\bf Theorem \ref{matreach}}) Necessity is evident. 
For sufficiency, let {\boldmath$(D=(V \cup R, A), \mathcal{M}=(R,r_{\mathcal{M}}))$} be a minimum counterexample with respect to $|V|$. Obviously $V \neq \emptyset$. 
Let {\boldmath $C$} $\subseteq V$ be the vertex set of a strongly connected component of $D$ that has no arc leaving. 
Since each $r\in R$ is a root, $C$ exists.

Let {\boldmath $D_1=(V_1\cup R,A_1)$} $=D-C.$ Note that $(D_1,\mathcal{M})$ is a matroid-rooted digraph.
By $d_A^+(C)=0$, we have $d_{A_1}^{-}(X)=d_{A}^{-}(X)$ and 
 $P^{D_1}_X=P^D_X \text{ for all } X\subseteq V_1\cup R.$ 
Then, since $(D,\mathcal{M})$ satisfies \eqref{matreachcond}, so does $(D_1,\mathcal{M}).$ Hence, by the minimality of $D$, $(D_1,\mathcal{M})$ has a matroid-reachability-based packing of arborescences  $\{${\boldmath $B^1_r$}$\}_{r \in R}.$ 

We now define  a matroid-rooted digraph $(D_2,\mathcal{M}_2)$ which depends on the arborescences  $\{B^1_r\}_{r \in R}.$ Let {\boldmath $R_2$} $=P^D_C\cap R$, {\boldmath $\mathcal{M}_2$} $=\mathcal{M}|R_2$ and {\boldmath $D_2$} $=(V_2\cup R_2,A_2)$ where {\boldmath $V_2$} $=C\cup T$ for a set {\boldmath $T$} $=\{ ${\boldmath $t_{uv}$} $:uv \in \rho_A^{-}(C)\}$ of new vertices and  {\boldmath $A_2$} contains $A(D[C]),$  $\{rt_{uv}:r\in R_2, u\in V(B^1_r), t_{uv} \in T\}$ and  for every $t_{uv} \in T$, one arc from $t_{uv}$ to $v$ and $r_{\mathcal{M}_2}(R_2)$ arcs from $v$ to $t_{uv}$.

\begin{lemma}\label{reachd''pack}
$(D_2,\mathcal{M}_2)$ satisfies \eqref{matcond}.
\end{lemma}

\begin{proof}
Let {\boldmath $X$}  $\subseteq V_2\cup R_2$ with $X\cap V_2\neq\emptyset$ and $X\cap R_2=N_{D_2}^-(X\cap V_2)\cap R_2$.

If $X \cap C= \emptyset$, then, by $X\cap V_2\neq\emptyset$,  there exists  some $t_{uv} \in X\cap T$. It follows that  $d_{A_2}^{-}(X)\geq d_{A_2}(v,t_{uv})=r_{\mathcal{M}_2}(R_2)\ge r_{\mathcal{M}_2}(R_2)-r_{\mathcal{M}_2}(X\cap R_2)$, so \eqref{matcond} holds for $X$ in $(D_2,\mathcal{M}_2)$.

If $X\cap C \neq \emptyset$,  then, since $D[C]$ is strongly connected, we have $R_2=P^D_C\cap R=P^D_{X\cap C}\cap R.$
Let {\boldmath $R'$} $=span_\mathcal{M}(X\cap R_2),$ {\boldmath $Y$} $=(V\cup R)-{U}^D_{R-R'}$ and {\boldmath $Z$} $=(X \cap C)\cup Y.$ Then we have
\begin{eqnarray}
&&r_\mathcal{M}(P^D_Z\cap R)=r_\mathcal{M}(R_2\cup R')=r_\mathcal{M}(R_2)=r_{\mathcal{M}_2}(R_2),\label{nkjffkvn}\\
&&r_\mathcal{M}(Z \cap R)=r_\mathcal{M}(R')= r_\mathcal{M}(X \cap R_2)= r_{\mathcal{M}_2}(X \cap R_2).\label{lkdijbiu}
\end{eqnarray}

\begin{claim}\label{conddxdy}
 $d_{A_2}^{-}(X)\geq d_{A}^{-}(Z)$. 
 \end{claim}

\begin{proof}
 Let {\boldmath $uv$} $\in \rho_A^-(Z)$. Since $u\notin Y$, we get $u\in {U}^D_{R-R'},$ so  $u \in U^{D}_{\bar r}$ for some {\boldmath ${\bar r}$} $\in R-R'$. Since $\rho^-_A(Y)=\emptyset$, we have $v \in X \cap C$.  If $u \in C$, then $u\in C-Z\subseteq C-X,$ so $uv \in \rho_{A_2}^{-}(X)$. Otherwise, $uv$ enters $C$ in $D$, so $t_{uv}\in T$. If $t_{uv} \in X$, then, since $\{r \in R: u \in V(B^1_r)\}$ is a basis of $P_u^{D}\cap R$ in $\mathcal{M}$, we have
${\bar r}\notin R'= span_\mathcal{M}(X\cap R_2)=span_{\mathcal{M}}(N_{D_2}^-(X\cap V_2)\cap R_2)\supseteq span_{\mathcal{M}}(N_{D_2}^-(t_{uv})\cap R_2)=span_{\mathcal{M}}(\{r \in R_2:u \in V(B^1_r)\})=span_{\mathcal{M}}(\{r \in R:u \in V(B^1_r)\})\supseteq P_u^{D}\cap R\supseteq \{{\bar r}\},$ 
 a contradiction. 
Thus $t_{uv}\notin X$ and so $t_{uv}v \in \rho_{A_2}^{-}(X)$.  Note that for distinct arcs in $A$ entering $Z$ distinct arcs in $A_2$ entering $X$ have been found above, so the claim follows.
\end{proof}

%
%

\begin{claim}\label{condZ}
 The set $Z$ satisfies \eqref{matreachcond}. 
 \end{claim}
 
\begin{proof}Observe that the claim does not directly follow from the assumption as $Z$ does not necessarily satisfy the condition $Z \cap R=N_D^-(Z \cap V) \cap R$. We now define {\boldmath$Z'$} $=(Z \cap V)\cup (N_D^-(Z \cap V) \cap R)$. Then $(Z-Z')\cup(Z'-Z)\subseteq R.$
Note that $Z' \cap R=N_D^-(Z' \cap V) \cap R$, hence $Z'$ satisfies \eqref{matreachcond} by assumption. Next observe that   no arc exists from $Z-Z'$ to $Z \cap V$ and for every $r \in Z'\cap R$,  at least one arc exists from $r$ to $Z \cap V$. This yields $d_A^-(Z)\ge d_A^-(Z')+|Z'-Z|$. Finally, we obtain $(P_{Z}^D\cap R) -(Z- Z')=P_{Z\cap V}^D\cap R
\subseteq P_{Z'}^D\cap R$, $((P_{Z}^D\cap R) -(Z- Z'))\cup (Z\cap R)=P_{Z}^D\cap R$ and $((P_{Z}^D\cap R) -(Z- Z'))\cap (Z\cap R)=Z \cap Z' \cap R$.  
Hence, since $r_{\mathcal{M}}$ is subcardinal, non-decreasing and submodular, we obtain
\begin{align*}
d_A^-(Z)&\ge d_A^-(Z')+|Z'-Z|\\
&\ge r_{\mathcal{M}}(P_{Z'}^D\cap R)-r_{\mathcal{M}}({Z'}\cap R)+r_{\mathcal{M}}(Z'-Z)\\
&\ge r_{\mathcal{M}}((P_{Z}^D\cap R) -(Z- Z'))-r_{\mathcal{M}}(Z \cap Z' \cap R)\\
&\ge r_{\mathcal{M}}(P_{Z}^D\cap R)-r_{\mathcal{M}}(Z\cap R),
\end{align*}
and the claim follows.
 \end{proof}

By Claims \ref{conddxdy} and \ref{condZ}, \eqref{nkjffkvn} and \eqref{lkdijbiu}, 
we obtain 
$d_{A_2}^{-}(X)\geq d_{A}^{-}(Z)\geq  r_{\mathcal{M}}(P^D_Z\cap R)-r_{\mathcal{M}}(Z\cap R)=r_{\mathcal{M}_2}(R_2)-r_{\mathcal{M}_2}(X \cap R_2),$
so \eqref{matcond} holds for $X$ in $(D_2,\mathcal{M}_2)$. This completes the proof of  Lemma \ref{reachd''pack}.
\end{proof}

By Lemma \ref{reachd''pack} and Theorem \ref{matbranch}, $(D_2,\mathcal{M}_2)$ has a matroid-based packing of arborescences $\{${\boldmath $B^2_r$}$\}_{r \in R_2}.$

 We finally give a construction of a  packing of the desired form in $(D,\mathcal{M})$ using the packings $\{B^1_r\}_{r \in R}$ and $\{B^2_r\}_{r \in R_2}$, which yields a contradiction.

\begin{lemma}\label{reachdpack}
$(D,\mathcal{M})$ has a matroid-reachability-based packing of arborescences.
\end{lemma}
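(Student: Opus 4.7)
The plan is to stitch the two packings together in the same way as in Lemma \ref{dpack}. For every $r \in R - R_2$, set $B_r := B_r^1$; for $r \in R_2$, form $B_r$ from the union of $B_r^1$ and $B_r^2 - (R_2 \cup T)$ by replacing each arc $t_{uv}v \in A(B_r^2)$ by the corresponding arc $uv \in A$. Arc-disjointness of $\{B_r\}_{r\in R}$ follows immediately from the arc-disjointness of $\{B_r^1\}_{r \in R}$ and $\{B_r^2\}_{r \in R_2}$, together with the fact that the two families occupy disjoint regions of $D$ (arcs inside $D_1$ for the first, and arcs entering $C$ produced by the substitutions for the second).

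The key verification is that every $B_r$ with $r \in R_2$ is an $r$-arborescence. The crucial observation is that whenever $t_{uv}v \in A(B_r^2)$, the unique in-arc of $t_{uv}$ in $B_r^2$ must be $rt_{uv}$: the only alternative in $D_2$ is $vt_{uv}$, but this would close the $2$-cycle $v\,t_{uv}\,v$ inside $B_r^2$. By the construction of $D_2$, the arc $rt_{uv}$ is present only when $u \in V(B_r^1)$, so the substituted arc $uv$ attaches the $C$-part of $B_r^2$ to $B_r^1$ at a genuine vertex of $B_r^1$. A straightforward in-degree count (as in the proof of Lemma \ref{dpack}) then confirms that every non-root vertex of $B_r$ has in-degree exactly one and that the underlying graph is a tree rooted at $r$.

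It remains to check the matroid-reachability condition for every $v \in V$. For $v \in V_1$, no $B_r^2 - (R_2 \cup T)$ part contributes $v$ (these parts lie inside $C$), so $\{r \in R : v \in V(B_r)\} = \{r \in R : v \in V(B_r^1)\}$, which is a basis of $P_v^{D_1} \cap R = P_v^D \cap R$ in $\mathcal{M}$ by Lemma \ref{reachd'pack}. For $v \in C$, only arborescences with $r \in R_2$ can contain $v$, and then $v \in V(B_r)$ iff $v \in V(B_r^2)$. By Lemma \ref{reachd''pack}, $\{r \in R_2 : v \in V(B_r^2)\}$ is a basis of $R_2$ in $\mathcal{M}_2$; since $D[C]$ is strongly connected and $d_A^+(C)=0$ we have $P_v^D \cap R = R_2$, so this set is also a basis of $P_v^D \cap R$ in $\mathcal{M}$.

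The main obstacle is precisely the observation identifying the in-arc of $t_{uv}$ in $B_r^2$; this is the single point where the structure of $D_2$ (the placement of the $rt_{uv}$ arcs only when $u \in V(B_r^1)$) ensures that the two packings glue compatibly. Without it, the in-degree count at vertices of $C$ in the resulting $B_r$ would break down and the construction would fail.
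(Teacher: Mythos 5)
Your proposal is correct and follows essentially the same route as the paper: the identical gluing construction, the same in-degree count showing each $B_r$ with $r\in R_2$ is an $r$-arborescence, and the same case split ($v\in V_1$ versus $v\in C$, using $P_v^D\cap R=R_2$ for the latter) for the matroid-reachability condition. Your explicit justification that the in-arc of $t_{uv}$ in $B_r^2$ must be $rt_{uv}$ (hence $u\in V(B_r^1)$) is a point the paper leaves implicit, but it is the same underlying observation.
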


\begin{proof}
For all $r\in R-R_2,$ let {\boldmath $B_r$} $=B^1_r$  and  for all $r\in R_2$, let {\boldmath $B_r$} be obtained from the union of $B_r^1$ and $B^2_r-(R_2\cup T)$  by adding the arc $uv$ for all $t_{uv}v \in A(B^2_r)$.

We first show that $\{B_r\}_{r \in R}$ is a packing of $r$-arborescences. Since $\{B^1_r\}_{r \in R}$ and $\{B^2_r\}_{r \in R_2}$ are packings, so is $\{B_r\}_{r \in R}$. For $r\in R-R_2$, $B_r=B_r^1$ is an $r$-arborescence. 
Now let  {\boldmath$r$} $\in R_2$
and {\boldmath$v$} $\in V(B_r)-r.$ If $v\notin C$, then $v\in U_r^{B_r^1}\subseteq U_r^{B_r}$ and $d_{A(B_r)}^-(v)=d_{A(B^1_r)}^-(v)=1.$ Otherwise, $v\in U_r^{B_r^2}$ and $d_{A(B^2_r)}^-(v)=1$ and when $t_{uv}v\in A(B^2_r)$ is replaced by $uv\in A(B_r)$ then $u\in V(B_r^1)$. Hence we have $v\in U_r^{B_r}$ and $d_{A(B_r)}^-(v)=1.$ Since $r$ is a root,
it follows that $B_r$ is an $r$-arborescence.

Finally, we show that the packing $\{B_r\}_{r \in R}$ is  matroid-reachability-based. For $v \in V-C$, we have  $\{r \in R:v \in V(B_r)\}=\{r \in R:v \in V(B^1_r)\}$ which is a basis of $P_v^{D_1}\cap R=P_v^D\cap R$ in $\mathcal{M}$ by $d_A^+(C)=0$. For $v \in C$, we have  $\{r \in R:v \in V(B_r)\}=\{r \in R_2:v \in V(B^2_r)\}$ which is a basis of $R_2$ in  $\mathcal{M}_2$, so a basis of $R_2=P_v^D\cap R$ in $\mathcal{M}$. 
\end{proof}

Lemma \ref{reachdpack} contradicts the fact that $(D,\mathcal{M})$ is a counterexample and hence completes the proof of Theorem \ref{matreach}.
\end{proof}

\subsection{Proof of Theorems \ref{matmixhypbranch} and \ref{matmixhypreach}}

We first derive Theorem \ref{matmixhypbranch} from Theorem \ref{matmixhyp}. 

\begin{proof}(of {\bf Theorem \ref{matmixhypbranch}})
Necessity is evident. 
For sufficiency, we define a simply matroid-rooted mixed hypergraph {\boldmath $(\mathcal{H}'=(V\cup R',\mathcal{A}'\cup \mathcal{E}),{\mathcal{M}}'=(R',r_{\mathcal{M}'}))$} obtained from  $(\mathcal{H},\mathcal{M})$ by replacing every root $r \in R$ by a set {\boldmath $Q_r$} of  $|N_\mathcal{H}^+(r)|$  simple roots such that $N_{\mathcal{H}'}^+(Q_r)=N_\mathcal{H}^+(r)$ in the mixed hypergraph and by $|Q_r|$ parallel copies of  $r$ in the matroid.

Now let {\boldmath $\{{\sf X}^i\}_1^\ell$} be a biset subpartition of $V$ with $X_W^i=N_{\mathcal{H}'}^-(X^{i}_I)\cap R'$ for $i=1,\ldots,\ell$. Let {\boldmath $i$} $\in \{1,\ldots,\ell\}$.  Let the biset {\boldmath ${\sf Y}^{i}$} be defined as $(X^{i}_I \cup \{r\in R:Q_r\cap X_W^i\neq\emptyset\},X_I^{i})$. Observe that $Y_I^i=X_I^i$, $Y_W^i=N_\mathcal{H}^-(Y^{i}_I)\cap R,$ $d_{\mathcal{A}}^-({\sf Y}^{i})\leq d_{\mathcal{A}'}^-({\sf X}^{i})$, $r_{\mathcal{M}}(R)=r_{\mathcal{M}'}(R')$  and $r_{\mathcal{M}}(Y_W^i)=r_{\mathcal{M}'}(X_W^i)$. Then, by \eqref{condmatmixhyp} applied for $\{{\sf Y}^{i}\}_{i=1}^{\ell}$, we obtain 
 $e_{\mathcal{E}}(\{X_I^i\}_1^{\ell})= e_{\mathcal{E}}(\{Y_I^i\}_1^{\ell})
 					\geq\sum_{i=1}^\ell( r_{\mathcal{M}}(R)-r_{\mathcal{M}}(Y_W^i)-d_{\mathcal{A}}^{-}({\sf Y}^{i}))
 					\geq \sum_{i=1}^\ell( r_{\mathcal{M}'}(R')-r_{\mathcal{M}'}(X_W^i)-d_{\mathcal{A}'}^{-}({\sf X}^{i})),$
  that is $(\mathcal{H}',\mathcal{M}')$ satisfies \eqref{condmatmixhyp}.

We may hence apply Theorem \ref{matmixhyp}  to obtain  in $(\mathcal{H}',\mathcal{M}')$ a matroid-based packing of mixed hyperarborescences  $\{${\boldmath$\mathcal{B}'_{r'}$}$\}_{r' \in R'}$ with arborescences $\{${\boldmath${B}'_{r'}$}$\}_{r' \in R'}$ as  directed trimmings like in the definition. For all $r \in R$, let {\boldmath$\mathcal{B}_r$} and {\boldmath$B_r$} be obtained from  $\{\mathcal{B}'_{r'}\}_{{r'}\in Q_r}$ and $\{B'_{r'}\}_{{r'}\in Q_r}$ by identifying all vertices of $Q_r$ into $r$. Note that $B_r$ is a  directed trimming of $\mathcal{B}_r$ for all $r \in R$. To show that $\{\mathcal{B}_r\}_{r \in R}$ is a packing of mixed hyperarborescences with the desired properties it is enough to show that $\{B_r\}_{r \in R}$ is a matroid-based packing of arborescences. Since $\{B'_{r'}\}_{r' \in R'}$ is a packing, so is $\{B_r\}_{r \in R}$. Let $r\in R$. Since $\{r' \in R':v \in V(B'_{r'})\}$ is independent in $\mathcal{M}'$ for all $v \in V$ and $Q_r$ is a set of parallel elements in $\mathcal{M}'$, $\{B'_{r'}\}_{r' \in Q_r}$ is a set of vertex-disjoint $r'$-arborescences  and hence $B_r$ is an $r$-arborescence. Moreover, for all $v\in V$, we have that $\{r \in R:v \in V(B_r)\}$ is a basis of $\mathcal{M}$ because $\{r' \in R':v \in V(B'_{r'})\}$ is a basis of $\mathcal{M}'$.
\end{proof}

We are now ready to derive Theorem \ref{matmixhypreach} from Theorem \ref{matmixhypbranch}. Again, the proof has certain similarities to the previous ones.

\begin{proof}(of {\bf Theorem \ref{matmixhypreach}})
We first prove {\bf necessity}. Suppose that there exists a matroid-reachability-based  packing of mixed hyperarborescences $\{${\boldmath $\mathcal{B}_r$}$\}_{r \in R}$. By definition, for every $r \in R$, there is an $r$-arborescence $B_r$ that is a  directed trimming of $\mathcal{B}_r$ with $\{r \in R:v\in V(B_r)\}$ being a basis of $P^{\mathcal{H}}_v\cap R$ in $\mathcal{M}$ for all $v \in V$. Let {\boldmath $\{{\sf X}^i\}_1^\ell$} be a biset subpartition of a strongly connected component {\boldmath $C$} of $\mathcal{H}-R$ such that $X_W^i=P^{\mathcal{H}}_{X_W^i}$ for all $i=1,\ldots,\ell$. 
Let {\boldmath $i$} $\in\{1,\ldots,\ell\}$, {\boldmath $R_i$} $=\{r \in R-X_W^{i}:V(B_r)\cap X_I^{i}\neq \emptyset\}$ and {\boldmath $v$} $\in X_I^i.$
 Then we have $r_{\mathcal{M}}(R_i\cup (X^{i}_W\cap R))\ge r_{\mathcal{M}}(\{r\in R:v\in V(B_r)\})=r_{\mathcal{M}}(P^{\mathcal{H}}_v\cap R)=r_{\mathcal{M}}(P^{\mathcal{H}}_C\cap R).$
Thus, by the subcardinality and the submodularity of $r_{\mathcal{M}}$, we can bound the size of $R_i$ from below:
$|R_i|\geq r_{\mathcal{M}}(R_i)\geq r_{\mathcal{M}}(R_i\cup (X^{i}_W\cap R))-r_{\mathcal{M}}(X^{i}_W\cap R)\ge  r_{\mathcal{M}}(P^{\mathcal{H}}_C\cap R)-r_{\mathcal{M}}(X^{i}_W\cap R).$
%
Since $X_W^i=P^{\mathcal{H}}_{X_W^i}$, no dyperedge and no hyperedge enters  $X_W^i$ in $\mathcal{H}.$ Then, by $v\in X^i_I,$ every $B_r$ with $r \in R_i$ has an arc that enters ${\sf X}^{i}$, that is  $\mathcal{B}_r$  contains either a dyperedge in ${\mathcal{A}}$ entering ${\sf X}^{i}$ or a hyperedge in $\mathcal{E}$ entering $X^i_I.$
Thus, since $\{\mathcal{B}_r\}_{r \in R}$ is a packing and by the above lower bound on $|R_i|$, we have 
$e_{\mathcal{E}}(\{X_I^i\}_1^{\ell})+\sum_{i=1}^\ell d^{-}_{\mathcal{A}}({\sf X}^{i})\geq \sum_{i=1}^\ell|R_i|\geq \sum_{i=1}^\ell(r_{\mathcal{M}}(P^{\mathcal{H}}_C\cap R)-r_{\mathcal{M}}(X^{i}_W\cap R)),$
%
so \eqref{4cond} holds.
\medskip

For {\bf sufficiency}, let $(${\boldmath$\mathcal{H}$} $=(V \cup R, \mathcal{A}\cup \mathcal{E}),$ {\boldmath$\mathcal{M}$} $=(R,r_{\mathcal{M}}))$ be a minimum counterexample  with respect to $|V|$. Obviously, $V \neq \emptyset$.  
Let {\boldmath $C$}  be the vertex set of a strongly connected component of $\mathcal{H}-R$ that has no dyperedge
 leaving. Since each $r\in R$ is a root, $C$ exists. 

Let {\boldmath $\mathcal{H}_1$} $=(${\boldmath $V_1$}$\cup R, ${\boldmath $\mathcal{A}_1$}$\cup${\boldmath $ \mathcal{E}_1$}$)=\mathcal{H}-C.$ Note that $(\mathcal{H}_1,\mathcal{M})$ is a matroid-rooted mixed hypergraph.
%
%
The fact that  $d_\mathcal{A}^+(C)=d_\mathcal{E}(C)=0$ implies that for all $X\subseteq V_1\cup R$, we have $P^{\mathcal{H}_1}_X=P^\mathcal{H}_X$,   for every subpartition $\mathcal{P}$ of $V_1\cup R$, we have $e_{\mathcal{E}}(\mathcal{P})=e_{\mathcal{E}_1}(\mathcal{P})$, and for every biset ${\sf X}$ on $V_1\cup R$, we have $d_{\mathcal{A}_1}^{-}({\sf X})=d_{\mathcal{A}}^{-}({\sf X})$. Then, since $\mathcal{H}$ satisfies \eqref{4cond}, so does $\mathcal{H}_1.$ Hence, by the minimality of $\mathcal{H},$ 
%
$(\mathcal{H}_1,\mathcal{M})$ has a matroid-reachability-based packing of mixed hyperarborescences $\{${\boldmath $\mathcal{B}^1_r$}$\}_{r \in R}$. By definition, $\mathcal{B}^1_r$ can be made into an $r$-arborescence {\boldmath$B^1_r$} by a directed trimming for all $r \in R$ such that  $\{r \in R:v \in V(B^1_r)\}$ is a basis of $P^{\mathcal{H}_1}_v\cap R=P^{\mathcal{H}}_v \cap R$ in $\mathcal{M}$ for all $v \in V_1$. 

We now define  a matroid-rooted mixed hypergraph $(\mathcal{H}_2,\mathcal{M}_2)$ which depends on the arborescences $\{B^1_r\}_{r \in R}.$ Let {\boldmath $R_2$} $=P_C^{\mathcal{H}}\cap R$, {\boldmath $\mathcal{M}_2$} the restriction of $\mathcal{M}$ to $R_2$ and  let {\boldmath $\mathcal{H}_2$} $=(V_2\cup R_2,\mathcal{A}_2\cup \mathcal{E}_2)$ be obtained from $\mathcal{H}[C]$ by adding a set  {\boldmath $T$} of new vertices {\boldmath $t_a$} for all $a \in\rho^-_{\mathcal{A}}(C)$ and the vertex set $R_2$ and by adding dyperedges {\boldmath $a'$} $=((tail(a)\cap C)\cup t_a, head(a))$ for all $t_a \in T,$  the arcs $rt_a$ for all $r \in R_2, t_a \in T$ with $tail(a)\cap V(B^1_r)\neq\emptyset$ and $r_{\mathcal{M}_2}(R_2)$ parallel arcs $head(a)t_a$ for all $t_a \in T$.

\begin{lemma}\label{reachmixedd''pack}
$(\mathcal{H}_2,\mathcal{M}_2)$ satisfies \eqref{condmatmixhyp}.
\end{lemma}

\begin{proof}
Let {\boldmath $\{{\sf X}^i\}_1^\ell$} be a biset subpartition of $V_2=C\cup T$ with 
$X_W^i=N_{\mathcal{H}_2}^-(X_I^i)\cap R_2$ for all $i=1,\ldots,\ell$.
We may suppose that there exists {\boldmath$j$} $\in \{0,\ldots,\ell\}$ such that  $X_I^i\cap C\neq\emptyset$ for all $i\in \{1,\ldots,$ {\boldmath$j$}$\}$ and $X_I^i\cap C=\emptyset$ for all $i\in \{j+1,\ldots,\ell\}.$ 

First let {\boldmath $i$} $\in \{j+1,\ldots,\ell\}$. By $X_I^i\cap V_2\neq\emptyset$,  there exists  some $t_{a} \in X_I^i\cap T$. It follows that $a$ enters $C$, so $head(a)\in C.$ Since $X_I^i\cap C=\emptyset$ and $X_W^i\subseteq R,$ we obtain $C\subseteq V-X_O^i$. Thus the number of dyperedges in ${\mathcal{A}_2}$ entering ${\sf X}^i$ is at least the number  of arcs in ${\mathcal{A}_2}$ from $head(a)$ to $t_a$ which is $r_{\mathcal{M}_2}(R_2)$. Hence  
\begin{equation}\label{liugfdsjhlgcl}
0\ge r_{\mathcal{M}_2}( R_2)-d_{\mathcal{A}_2}^{-}({\sf X}^i)\ge r_{\mathcal{M}_2}( R_2)-r_{\mathcal{M}_2}(X_W^i)-d_{\mathcal{A}_2}^{-}({\sf X}^i).
\end{equation}

Let now {\boldmath $i$} $\in \{1,\ldots,j\}.$ Let {\boldmath$R'$} $=span_{\cal M}(X_W^i),$ {\boldmath $Y^i$} $=(V\cup R)-({U}^{\mathcal{H}}_{R-R'}\cup C)$ and let the biset  {\boldmath ${\sf Z}^{i}$} be defined as $((X_I^{i}\cap C)\cup Y^i,X_I^{i}\cap C)$.   
Note that $Z^i_I=X_I^i\cap C$ and $Z^{i}_W\cap R=Y^i\cap R=R-(R-R')=R'$, so 
\begin{equation}\label{knsijdsuy}
r_{\mathcal{M}}(Z_W^{i}\cap R)=r_{\mathcal{M}}(R')=r_{\mathcal{M}_2}(X_W^i). 
\end{equation}

Since no dyperedge and no hyperedge leaves ${U}^{\mathcal{H}}_{R-R'}\cup C=(V \cup R)-Y^i,$ we have
\begin{equation}\label{dkbjh}
Z_W^i=Y^i=P^{\mathcal{H}}_{Y^i}=P^{\mathcal{H}}_{Z_W^i}. 
\end{equation}

\begin{claim}\label{conddxdyhyp}
$d_{\mathcal{A}_2}^{-}({\sf X}^{i})\geq d_{\mathcal{A}}^{-}({\sf Z}^{i}).$
 \end{claim}
\begin{proof}
Let {\boldmath $a$} $ \in \rho_{\mathcal{A}}^{-}({\sf Z}^{i})$ and {\boldmath $v$} $=head(a)$. Then $v \in Z_I^i=X_I^i \cap C$. If $tail(a)\subseteq  C$, then $a\in \rho_{\mathcal{A}_2}^{-}({\sf X}^{i})$. Otherwise, the dyperedge $a$ enters $C$ and hence $t_a$ exists in $T.$ If $(tail(a)-Z_O^{i})\cap C \neq \emptyset$, then $a' \in \rho_{\mathcal{A}_2}^{-}({\sf X}^{i})$. In the remaining case, as $a$ enters the biset ${\sf Z}^i$, there exists some {\boldmath$u$} $\in tail(a)-Z_O^{i}- C=tail(a)-Y^{i}- C$. Then $u \in U^{\mathcal{H}}_{\bar r}$ for some {\boldmath ${\bar r}$} $\in R-R'$. If $t_a \in X_I^{i}$, then, since $\{r \in R: u \in V(B^1_r)\}$ is a basis of $P_u^{\mathcal{H}}\cap R$ in $\mathcal{M}$, we obtain 
\begin{align*}
{\bar r}\notin R'&=span_{\mathcal{M}}(X_W^i)\\
&=span_{\mathcal{M}}(N_{\mathcal{H}_2}^-(X_I^i)\cap R_2)\\
&\supseteq span_{\mathcal{M}}(N_{\mathcal{H}_2}^-(t_a)\cap R_2)\\
&= span_{\mathcal{M}}(\{r \in R_2:tail(a)\cap V(B^1_r)\neq\emptyset\})\\
&\supseteq span_{\mathcal{M}}(\{r \in R: u \in V(B^1_r)\})\\
&\supseteq P_u^{\mathcal{H}}\cap R\supseteq \{{\bar r}\},
\end{align*}
 a contradiction.
  It follows that $t_a \notin X^{i}_I$ and hence, by $X_W^i\subseteq R_2$, we have  $t_a \notin X^{i}_O$. This yields that $a' \in \rho_{\mathcal{A}_2}^{-}({\sf X}^{i})$. Note that for distinct dyperedges in ${\mathcal{A}}$ entering the biset ${\sf Z}^i$ distinct dyperedges in ${\mathcal{A}_2}$ entering the biset ${\sf X}^i$ have been found above, so the claim follows.
\end{proof}

Since $Z_W^i\cap C=\emptyset$ for all $i\in\{1,\dots,j\}$ and $\{X_I^i\}_1^{\ell}$ is a  subpartition of $C\cup T$, we have that  $\{{\sf Z}^i\}_1^j$ is a biset subpartition of $C.$ Then, by \eqref{dkbjh}, we may apply \eqref{4cond} for $\{{\sf Z}^i\}_1^j$, so, by  definition of $R_2$, \eqref{knsijdsuy}, Claim \ref{conddxdyhyp} and \eqref{liugfdsjhlgcl}, we have
$e_{\mathcal{E}_2}(\{X_I^i\}_1^{\ell})	=e_{\mathcal{E}_2}(\{X_I^i\}_1^j)=e_{\mathcal{E}}(\{Z_I^i\}_1^j)	\geq \sum_{i=1}^j(r_{\mathcal{M}}(P^{\mathcal{H}}_C\cap R)-r_{\mathcal{M}}(Z^{i}_W\cap R)-d^{-}_{\mathcal{A}}({\sf Z}^{i}))\geq \sum_{i=1}^j(r_{\mathcal{M}_2}(R_2)-r_{\mathcal{M}_2}(X_W^i)-d_{\mathcal{A}_2}^{-}({\sf X}^{i}))\geq \sum_{i=1}^{\ell}(r_{\mathcal{M}_2}(R_2)-r_{\mathcal{M}_2}(X_W^i)-d_{\mathcal{A}_2}^{-}({\sf X}^{i})),$
that is \eqref{condmatmixhyp} holds for $\{{\sf X}^i\}_1^\ell$ in $(\mathcal{H}_2,\mathcal{M}_2)$. This completes the proof of Lemma \ref{reachmixedd''pack}.
%
\end{proof}

By Lemma \ref{reachmixedd''pack}, $(\mathcal{H}_2,\mathcal{M}_2)$ has a matroid-based packing of mixed hyperarborescences  $\{${\boldmath $\mathcal{B}^2_r$}$\}_{r \in R_2}$ with $r$-arborescences $\{${\boldmath $B^2_r$}$\}_{r \in R_2}$ as directed trimmings like in the definition. We finally give a construction of a  packing of the desired form in $(\mathcal{H},\mathcal{M})$ using the packings $\{\mathcal{B}^1_r\}_{r\in R}$ and $\{\mathcal{B}^2_r\}_{r\in R_2}$, which yields a contradiction.

\begin{lemma}\label{reachmixeddpack}
$(\mathcal{H},\mathcal{M})$ has a matroid-reachability-based packing of mixed hyperarborescences.
\end{lemma}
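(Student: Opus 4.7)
The plan is to follow the scheme of Lemma \ref{reachdpack}, combining the packings $\{\mathcal{B}^1_r\}_{r\in R}$ from Lemma \ref{reachmixedd'pack} and $\{\mathcal{B}^2_r\}_{r\in R_2}$ from Lemma \ref{reachmixedd''pack}. Concretely, for every $r\in R-R_2$ I would set $\mathcal{B}_r=\mathcal{B}^1_r$, and for every $r\in R_2$ I would take $\mathcal{B}_r$ to consist of all dyperedges and hyperedges of $\mathcal{B}^1_r$, all dyperedges and hyperedges of $\mathcal{B}^2_r$ that lie entirely in $C$, and, for every auxiliary dyperedge $a'\in\mathcal{A}(\mathcal{B}^2_r)$ that was introduced in the construction of $\mathcal{H}_2$ from some $a\in\rho^-_\mathcal{A}(C)$, the original dyperedge $a\in\mathcal{A}$. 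Dyperedge- and hyperedge-disjointness of $\{\mathcal{B}_r\}_{r\in R}$ is then inherited from the two input packings, since elements taken from some $\mathcal{B}^1_r$ live in $\mathcal{H}-C$ while the remaining elements are all incident to $C$, so any potential conflict can only arise within a single input packing.

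The central step, and the main obstacle of the proof, is showing that each $\mathcal{B}_r$ is a mixed $r$-hyperarborescence. For $r\in R-R_2$ this is immediate since $\mathcal{B}_r=\mathcal{B}^1_r$. For $r\in R_2$ I would build a trimming $B_r$ by assembling $B^1_r$, the arcs of $B^2_r$ lying entirely in $C$, and, for every replacement dyperedge $a$, a carefully chosen arc entering $head(a)$. The crucial point, exactly as in Lemma \ref{reachdpack} but now lifted to dyperedges, is that this arc can be chosen with tail in $V(B_r)$. Let $xy$ be the arc of $B^2_r$ trimming the auxiliary dyperedge $a'$. If $x\in tail(a)\cap C$, trim $a$ to $xy$. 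Otherwise $x=t_a$, and since $B^2_r$ is an $r$-arborescence, the unique arc entering $t_a$ cannot be one of the parallel arcs $head(a)t_a$, so it must be of the form $\bar r t_a$ for some $\bar r\in R_2$; because $B^2_r$ contains only the root $r$ from $R_2$, we have $\bar r=r$, and by the very definition of $\mathcal{H}_2$ this forces $tail(a)\cap V(B^1_r)\neq\emptyset$, allowing us to trim $a$ to $uy$ for any $u$ in this intersection. A routine in-degree count then shows that every vertex of $V(B_r)-r$ receives exactly one arc, so $B_r$ is indeed an $r$-arborescence and hence $\mathcal{B}_r$ is a mixed $r$-hyperarborescence.

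It remains to check the matroid-reachability condition. For $v\in V-C$ we have $v\in V(B_r)$ if and only if $v\in V(B^1_r)$, so Lemma \ref{reachmixedd'pack} gives that $\{r\in R:v\in V(B_r)\}$ is a basis of $P^{\mathcal{H}_1}_v\cap R=P^{\mathcal{H}}_v\cap R$ in $\mathcal{M}$. For $v\in C$, the arborescences $B_r$ with $r\in R-R_2$ avoid $C$, and since $C$ is strongly connected we have $P^{\mathcal{H}}_v\cap R=P^{\mathcal{H}}_C\cap R=R_2$; matroid-basedness of $\{B^2_r\}_{r\in R_2}$ in $\mathcal{M}_2=\mathcal{M}|R_2$ then yields that $\{r\in R_2:v\in V(B^2_r)\}$ is a basis of $R_2$ in $\mathcal{M}_2$, hence of $R_2=P^{\mathcal{H}}_v\cap R$ in $\mathcal{M}$. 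This produces the desired matroid-reachability-based packing of mixed hyperarborescences in $(\mathcal{H},\mathcal{M})$ and contradicts the minimality of the counterexample, completing the proof.
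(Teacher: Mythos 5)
Your proposal is correct and follows essentially the same route as the paper: take $\mathcal{B}_r=\mathcal{B}^1_r$ for $r\in R-R_2$, and for $r\in R_2$ merge $\mathcal{B}^1_r$ with the part of $\mathcal{B}^2_r$ inside $C$, replacing each auxiliary dyperedge $a'$ by the original $a\in\rho^-_{\mathcal{A}}(C)$ and trimming it to an arc with tail in $V(B^1_r)$, then verifying the basis condition separately on $V-C$ and on $C$. In fact you spell out a detail the paper leaves implicit (why an arc $t_a\,head(a)$ in $B^2_r$ forces $tail(a)\cap V(B^1_r)\neq\emptyset$, via the unique arc entering $t_a$), which is a welcome addition rather than a deviation.
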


\begin{proof}
For $r \in R-R_2$, let {\boldmath ${B}_r$} $={B}^1_r$  and for $r \in R_2$, let {\boldmath ${B}_r$} be obtained from the union of ${B}^1_r$ and ${B}^2_r-R_2-T$ by adding an arc $uv$ for all arcs $t_av$ of ${B}^2_r$ for some $u\in tail(a)\cap V(B^1_r).$ As in the proof of Theorem \ref{matreach}, we can see that $\{B_r\}_{r \in R}$ is a packing of arborescences such that the root of $B_r$ is  $r$ for all $r\in R$ and $\{r \in R:v \in V(B_r)\}$ is a basis of $P^{\mathcal{H}}_v\cap R$ in $\mathcal{M}$ for all $v \in V$. 

Finally, for $r \in R-R_2$, let {\boldmath $\mathcal{B}_r$} $=\mathcal{B}^1_r$  and for $r \in R_2$, let {\boldmath $\mathcal{B}_r$} be obtained from $\mathcal{B}^1_r$ and $\mathcal{B}^2_r-R_2-T$ by adding the dyperedge $a\in \mathcal{A}$ for all $a' \in  \mathcal{A}(\mathcal{B}^2_r)$. The above argument shows that this is a packing of mixed hyperarborescences in $\mathcal{H}$ (with arborescences $\{B_r\}_{r \in R}$ as  directed trimmings) with the desired properties.
\end{proof}

Lemma \ref{reachmixeddpack} contradicts the fact that $(\mathcal{H},\mathcal{M})$ is a counterexample and hence  the proof of Theorem \ref{matmixhypreach} is complete.
\end{proof}

\section{Algorithmic aspects}\label{alg}

This section deals with the algorithmic consequences of our proofs. 
First in Subsection \ref{algunw}, we provide algorithms for two unweighted problems in digraphs, namely finding a packing of reachability arborescences and finding a matroid-reachability-based packing of arborescences. After, we give an even stronger result in the most general setting: we show how to compute a matroid-reachability-based packing of mixed hyperarborescences minimizing a given weight function in polynomial time. In order to do so, we first provide a polynomial time algorithm for finding a  matroid-based packing of mixed hyperarborescences of minimum weight in a mixed hypergraph in Subsection \ref{mb}. Finally, we show how to use this algorithm and the proof of Theorem \ref{matmixhypreach} to find a matroid-reachability-based packing of mixed hyperarborescences of minimum weight in a mixed hypergraph in polynomial time in Subsection \ref{mainalgo}.

\subsection{Unweighted algorithms}\label{algunw}
For the basic case, we show that our proof of Theorem \ref{3japt}  yields a  polynomial time algorithm for finding the desired packing of reachability arborescences. We acknowledge that so does the original proof in \cite{kkt}. We first mention that the packings in Theorem \ref{edstrong} can be found in polynomial time, either following  the proof of Frank (Theorem 10.2.1 in \cite{book}) or using the efficient algorithm of Gabow \cite{gabow}. Using this algorithm and the notation of Subsection \ref{lfbdjhfvhd}, we now turn our proof of Theorem \ref{3japt} into a recursive polynomial time algorithm for finding the desired packing of arborescences. It is clear that the vertex set $C$ of a strongly connected component of $D$ that has no arc leaving can be found in polynomial time. Recursively, we first find the arborescences $B^1_r$ $(r\in R)$ in the smaller instance $D_1$ in polynomial time. As the size of $D_2$ is polynomial in the size of $D$, we can apply the algorithm mentioned above to obtain the arborescences $B^2_r$ $(r\in R_2)$ in polynomial time. The obtained arborescences can be merged efficiently to obtain the arborescences $B_r$ $(r\in R)$ as in 
the proof of Theorem \ref{3japt}. Clearly, this algorithm  runs in polynomial time and provides the  packing $\{B_r\}_{r \in R}$ of reachability $r$-arborescences in $D.$

For the matroidal case, we suppose that a polynomial time  independence oracle for $\mathcal{M}$ is given and show that our proof of Theorem \ref{matreach} yields a polynomial time algorithm for finding the matroid-reachability-based packing of arborescences. We acknowledge that so does the original proof in \cite{k}. We first mention that the packings in Theorem \ref{mat} can be found in polynomial time  if a polynomial time  independence oracle for $\mathcal{M}$ is given as shown in \cite{dns}. It is easy to see that the proof of Theorem \ref{matbranch} yields a polynomial time algorithm if a polynomial time  independence oracle for $\mathcal{M}$ is given.  By similar arguments as before and the fact that we easily obtain polynomial time independence oracles for all matroids considered, we obtain that the proof of Theorem \ref{matreach} can be turned into a recursive polynomial time algorithm.

\subsection{Weighted algorithm for matroid-based packings of mixed hyperarborescences}\label{mb}

This section is dedicated to giving a polynomial time algorithm for finding a  matroid-based packing of mixed hyperarborescences minimizing a given weight function in a mixed hypergraph. We do so by modeling the hyper- and dyperedge sets of such packings as the intersection of two matroids. This extends a method from \cite{kst}.

We need the following result of Fortier et al. \cite{fklst} that provides a polynomial time algorithm to find a matroid-based packing of  hyperarborescences in a matroid-rooted dypergraph.

\begin{theorem}[\cite{fklst}]\label{algofkmst}
Let $(\mathcal{D},\mathcal{M})$ be a matroid-rooted dypergraph with a polynomial time independence oracle for $\mathcal{M}$ being available. Then, we can decide in polynomial time whether a matroid-based packing of  hyperarborescences exists in $(\mathcal{D}, \mathcal{M})$. Further, if such a packing exists, then we can compute one in polynomial time.
\end{theorem}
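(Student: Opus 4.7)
The plan is to model matroid-based packings of hyperarborescences as the intersection of two matroids, extending the construction of Kir\'aly, Szigeti and Tanigawa \cite{kst} from digraphs to dypergraphs. I would take as ground set the labeled dyperedge set $E^{\star}=\{(a,r) : a\in\mathcal{A},\, r\in R\}$, where the pair $(a,r)$ represents the assignment of the dyperedge $a$ to the hyperarborescence $\mathcal{B}_r$. A subset $F\subseteq E^{\star}$ encodes a candidate labeled selection of dyperedges and we want common bases of two matroids to correspond exactly to matroid-based packings.

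On $E^{\star}$ I would define two matroids. The first, $\mathcal{N}_1$, encodes the matroid-based condition at the heads: for each $v\in V$, on $\{(a,r)\in E^{\star} : head(a)=v\}$ we place the pullback of $\mathcal{M}$ along the projection $(a,r)\mapsto r$, and $\mathcal{N}_1$ is the direct sum of these local matroids over $v\in V$; an independence oracle for $\mathcal{N}_1$ follows at once from the given oracle for $\mathcal{M}$. The second, $\mathcal{N}_2$, enforces that for each fixed $r\in R$ the dyperedges carrying label $r$ form a subset of the dyperedge set of some spanning $r$-hyperarborescence in $\mathcal{H}$; it is the direct sum over $r\in R$ of per-root dypergraph-arborescence matroids on $\mathcal{A}$, pulled back to $E^{\star}$ via $(a,r)\mapsto a$. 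The common bases of $\mathcal{N}_1$ and $\mathcal{N}_2$ of cardinality $\sum_{v\in V} r_{\mathcal{M}}(R)$ are then in bijection with matroid-based packings of hyperarborescences, so Edmonds' matroid intersection algorithm produces one in polynomial time whenever one exists and otherwise certifies non-existence.

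The main obstacle is proving that, for each fixed $r\in R$, the family of dyperedge subsets extendible to a spanning $r$-hyperarborescence of $\mathcal{H}$ is indeed a matroid and admits a polynomial time independence oracle. For the matroid property I would invoke the dypergraph packing theorem of Frank, Kir\'aly and Kir\'aly \cite{fkk}, whose submodular cut characterization allows the extendible family to be described via a submodular function and hence as the independent sets of a matroid. For the oracle, given a candidate $\mathcal{A}'\subseteq\mathcal{A}$ I would test extendibility by building an auxiliary dypergraph in which $\mathcal{A}'$ is ``pre-committed'' (contracting the vertex identifications it forces inside any spanning $r$-hyperarborescence), and then checking whether the spanning $r$-hyperarborescence condition still holds there, which is a polynomial time task via \cite{fkk}. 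Combining these oracles with Edmonds' weighted matroid intersection algorithm then yields the desired polynomial time decision and construction procedure.
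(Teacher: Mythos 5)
First, a point of orientation: the paper does not prove Theorem \ref{algofkmst} at all --- it is imported verbatim from Fortier et al.\ \cite{fklst} and is then used as a black box inside the proof of Theorem \ref{algo2} (precisely to recover an actual packing from the dyperedge set that matroid intersection produces). So your attempt has to stand on its own, and it cannot, because of two concrete gaps.

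The fatal one is the claim you yourself flag as the main obstacle: for a fixed root $r$, the family of dyperedge subsets extendible to a spanning $r$-hyperarborescence is \emph{not} a matroid, already for digraphs. Take $V\cup R=\{r,a,b\}$ with arcs $ra$, $rb$, $ab$, $ba$. Both $\{ba\}$ and $\{ra,ab\}$ are extendible to spanning $r$-arborescences, but neither $\{ba,ra\}$ (two arcs with head $a$) nor $\{ba,ab\}$ (a cycle in the underlying graph) is contained in any arborescence, so the exchange axiom fails. The cut characterization of Frank, Kir\'aly and Kir\'aly in \cite{fkk} tells you when a \emph{complete} packing exists; it does not make the family of partial extendible sets into the independent sets of a matroid, and no contraction of ``pre-committed'' dyperedges repairs a failure of the exchange axiom. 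This is exactly why the construction of \cite{kst}, and its extension in Subsection \ref{mb} of the paper, does not decompose by root: it works on the unlabeled ground set $\mathcal{A}$, encodes the forest-and-root structure of \emph{all} arborescences simultaneously in a single count matroid (the Tanigawa matroid defined by the submodular function $b_{(\mathcal{H},\mathcal{M})}$), and relegates only the in-degree bound to the second (entering) matroid. A second, independent defect of your labeled ground set $E^{\star}$ is that neither $\mathcal{N}_1$ nor $\mathcal{N}_2$ prevents selecting both $(a,r)$ and $(a,r')$ for the same dyperedge $a$ and distinct roots $r\neq r'$ with $\{r,r'\}$ independent in $\mathcal{M}$: in $\mathcal{N}_1$ these two elements are admissible parallel-copy representatives of distinct roots at the head of $a$, and in $\mathcal{N}_2$ they live in different direct summands and never interact. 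A common independent set may therefore use a dyperedge twice, so it does not correspond to a packing. Repairing both issues essentially forces you back to the unlabeled Tanigawa-plus-entering formulation --- which, notably, only yields the dyperedge \emph{set} of a packing and still requires the separate combinatorial algorithm of \cite{fklst} (the very statement at hand) to produce the assignment of dyperedges to roots.
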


\subsubsection{Relevant matroids}

We here give the necessary matroids for this section.
We first define Katoh-Tanigawa matroids, then extended Katoh-Tanigawa matroids and finally entering matroids.

The first matroid is a straightforward extension of the construction of the graphic case in \cite{kst}. The authors of \cite{kst} also rely on ideas in \cite{kt}. It further generalizes the notion of hypergraphic matroids which was introduced by Lorea \cite{l} and later studied by Frank, Kir\'aly and Kriesell \cite{fkk2}. Let $(\mathcal{H}=(V \cup R, \mathcal{A}\cup\mathcal{E}),\mathcal{M}=(R,r_{\mathcal{M}}))$ be a simply matroid-rooted  mixed hypergraph with a polynomial time independence oracle for $\mathcal{M}$ being available. 
We consider the function $b_{(\mathcal{H},\mathcal{M})}:2^{\mathcal{A}\cup\mathcal{E}}\rightarrow \mathbb{Z}$ defined as 
\smallskip

\centerline{{\boldmath$b_{(\mathcal{H},\mathcal{M})}(\mathcal{X})$} $=r_{\mathcal{M}}(R)(|V_\mathcal{X}|-1)+r_{\mathcal{M}}(R_\mathcal{X})$ for all $\mathcal{X}\subseteq\mathcal{A}\cup\mathcal{E}$.} 

\begin{lemma}\label{submod}
The function $b_{(\mathcal{H},\mathcal{M})}$ is submodular on $2^{\mathcal{A}\cup\mathcal{E}}.$
\end{lemma}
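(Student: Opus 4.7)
The plan is to decompose $b_{(\mathcal{H},\mathcal{M})}$ as the sum of a constant and two nonnegative set functions, and to show that each nontrivial summand is submodular on $2^{\mathcal{A}\cup\mathcal{E}}$. Concretely, I would write $b_{(\mathcal{H},\mathcal{M})}(\mathcal{X}) = r_{\mathcal{M}}(R)\cdot|V_\mathcal{X}| + r_{\mathcal{M}}(R_\mathcal{X}) - r_{\mathcal{M}}(R)$, so that it suffices to verify that $\mathcal{X}\mapsto |V_\mathcal{X}|$ and $\mathcal{X}\mapsto r_{\mathcal{M}}(R_\mathcal{X})$ are both submodular (the additive constant is harmless, and the nonnegative scalar $r_{\mathcal{M}}(R)$ preserves submodularity).

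The key combinatorial observation driving both parts is that the maps $\mathcal{X}\mapsto V_\mathcal{X}$ and $\mathcal{X}\mapsto R_\mathcal{X}$ are themselves well-behaved under union and intersection: straight from the definitions one has $V_{\mathcal{X}\cup\mathcal{Y}}=V_\mathcal{X}\cup V_\mathcal{Y}$ and $R_{\mathcal{X}\cup\mathcal{Y}}=R_\mathcal{X}\cup R_\mathcal{Y}$, together with the inclusions $V_{\mathcal{X}\cap\mathcal{Y}}\subseteq V_\mathcal{X}\cap V_\mathcal{Y}$ and $R_{\mathcal{X}\cap\mathcal{Y}}\subseteq R_\mathcal{X}\cap R_\mathcal{Y}$. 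From the first pair, the identity $|V_\mathcal{X}|+|V_\mathcal{Y}|=|V_\mathcal{X}\cup V_\mathcal{Y}|+|V_\mathcal{X}\cap V_\mathcal{Y}|$ combined with the inclusion yields submodularity of $|V_\bullet|$. For the second summand I would invoke submodularity and monotonicity of $r_{\mathcal{M}}$: submodularity gives $r_{\mathcal{M}}(R_\mathcal{X})+r_{\mathcal{M}}(R_\mathcal{Y})\geq r_{\mathcal{M}}(R_\mathcal{X}\cup R_\mathcal{Y})+r_{\mathcal{M}}(R_\mathcal{X}\cap R_\mathcal{Y})$, and monotonicity combined with $R_{\mathcal{X}\cap\mathcal{Y}}\subseteq R_\mathcal{X}\cap R_\mathcal{Y}$ lets me replace the second term by $r_{\mathcal{M}}(R_{\mathcal{X}\cap\mathcal{Y}})$, while the first term equals $r_{\mathcal{M}}(R_{\mathcal{X}\cup\mathcal{Y}})$ by the union identity.

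Summing the two inequalities (after scaling the first by $r_{\mathcal{M}}(R)\geq 0$) yields the required submodularity inequality for $b_{(\mathcal{H},\mathcal{M})}$. I do not expect any real obstacle here: the argument is essentially bookkeeping built on two ingredients—the set-theoretic behavior of $V_\bullet$ and $R_\bullet$ under $\cup$ and $\cap$, and the standard submodularity plus monotonicity of a matroid rank function. The only point requiring a small amount of care is remembering that $R_\mathcal{X}$ is determined only by the dyperedges in $\mathcal{X}$ whereas $V_\mathcal{X}$ sees both dyperedges and hyperedges; this asymmetry, however, does not affect the union/intersection identities used above, since in both cases each incidence is contributed by a single (dy)peredge.
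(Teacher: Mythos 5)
Your proposal is correct and follows essentially the same route as the paper's proof: both decompose $b_{(\mathcal{H},\mathcal{M})}$ into the scaled term involving $|V_\mathcal{X}|$ and the term $r_{\mathcal{M}}(R_\mathcal{X})$, and both rely on exactly the same ingredients, namely modularity of cardinality, submodularity and monotonicity of $r_{\mathcal{M}}$, and the identities/inclusions $V_{\mathcal{X}\cup\mathcal{Y}}=V_\mathcal{X}\cup V_\mathcal{Y}$, $V_{\mathcal{X}\cap\mathcal{Y}}\subseteq V_\mathcal{X}\cap V_\mathcal{Y}$ and their analogues for $R_\bullet$. No gaps.
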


\begin{proof}
Since the cardinality function  is modular, $V_{\mathcal{Y}\cap\mathcal{Z}}\subseteq V_{\mathcal{Y}}\cap V_{\mathcal{Z}}$ and $V_{\mathcal{Y}\cup\mathcal{Z}}=V_{\mathcal{Y}}\cup V_{\mathcal{Z}}$ for all $\mathcal{Y},\mathcal{Z}\subseteq\mathcal{A}\cup\mathcal{E}$, we have that  the function $r_{\mathcal{M}}(R)(|V_{\mathcal{X}}|-1)$ is submodular.
Since the function $r_{\mathcal{M}}(R_\mathcal{X})$ is submodular and non-decreasing, 
$R_{\mathcal{Y}\cap\mathcal{Z}}\subseteq R_{\mathcal{Y}}\cap R_{\mathcal{Z}}$ and $R_{\mathcal{Y}\cup\mathcal{Z}}=R_{\mathcal{Y}}\cup R_{\mathcal{Z}}$ for all $\mathcal{Y},\mathcal{Z}\subseteq\mathcal{A}\cup\mathcal{E}$, we have that the function $r_{\mathcal{M}}(R_{\mathcal{X}})$ is submodular. It follows that the function $b_{(\mathcal{H},\mathcal{M})}$ is  submodular.
\end{proof}

To define a matroid by the function $b_{(\mathcal{H},\mathcal{M})}$, we need the following result of Edmonds \cite{e1} (see also in \cite[Section 13.4]{book}).

\begin{theorem}[\cite{e1}]\label{matroid}
If $b$ is an integer valued, non-decreasing and submodular function on $2^S$ for a ground set $S$  with $b(X)\geq 0$ for all nonempty $X \subseteq S$,  then $\mathcal{I}_{b}=\{X\subseteq S:b(Y)\ge |Y| \text{ for all nonempty }  Y \subseteq X\}$ is the set of independent sets of a matroid on $S$.
\end{theorem}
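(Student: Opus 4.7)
The plan is to verify the three matroid independence axioms for $\mathcal{I}_b$. The empty set lies in $\mathcal{I}_b$ vacuously, and $\mathcal{I}_b$ is hereditary since the defining condition on $X$ passes to any subset. The substantive step is the augmentation axiom: given $X,Y\in\mathcal{I}_b$ with $|X|>|Y|$, I want to exhibit some $x\in X\setminus Y$ with $Y\cup\{x\}\in\mathcal{I}_b$.

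I would argue by contradiction. Suppose $Y\cup\{x\}\notin\mathcal{I}_b$ for every $x\in X\setminus Y$. Then for each such $x$ there is a nonempty subset of $Y\cup\{x\}$ violating the defining inequality; since $Y\in\mathcal{I}_b$, this set must contain $x$, so it has the form $Y_x\cup\{x\}$ with $Y_x\subseteq Y$. The set $Y_x$ is nonempty, for otherwise $b(\{x\})\le 0$, contradicting $\{x\}\subseteq X\in\mathcal{I}_b$. Moreover monotonicity together with $Y_x\subseteq Y\in\mathcal{I}_b$ gives $b(Y_x\cup\{x\})\ge b(Y_x)\ge|Y_x|$, and combined with $b(Y_x\cup\{x\})\le|Y_x|$ this forces equality $b(Y_x\cup\{x\})=|Y_x|$.

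Enumerate $X\setminus Y=\{x_1,\dots,x_k\}$ and set $Z_i=Y_{x_i}\cup\{x_i\}$, so that $Z:=Z_1\cup\cdots\cup Z_k=(X\setminus Y)\cup W$ where $W=\bigcup_i Y_{x_i}\subseteq Y$. The key technical step is to prove by induction on $m$ that $b(Z_1\cup\cdots\cup Z_m)\le |Z_1\cup\cdots\cup Z_m|-m$. For the inductive step, apply submodularity to $U_m=Z_1\cup\cdots\cup Z_m$ and $Z_{m+1}$ to get $b(U_{m+1})\le b(U_m)+b(Z_{m+1})-b(U_m\cap Z_{m+1})$. Since the $x_i$ are distinct and none lie in $Y$, the intersection $U_m\cap Z_{m+1}$ reduces to $(Y_{x_1}\cup\cdots\cup Y_{x_m})\cap Y_{x_{m+1}}\subseteq Y$; hence, when nonempty, its $b$-value is at least its cardinality by $Y\in\mathcal{I}_b$. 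The empty-intersection case is handled by normalizing $b(\emptyset)=0$, which leaves $\mathcal{I}_b$ and the submodularity used here unchanged. Plugging in the inductive bound and using $|Z_i|=|Y_{x_i}|+1$ then completes the step, yielding $b(Z)\le|Z|-k=|W|$ at $m=k$.

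To close the argument, note that $Z\cap X=(X\setminus Y)\cup(W\cap X)\subseteq X\in\mathcal{I}_b$, so monotonicity gives $b(Z)\ge b(Z\cap X)\ge|Z\cap X|=k+|W\cap X|$. Comparing with $b(Z)\le|W|$ yields $|W\setminus X|\ge k$, and since $W\subseteq Y$ this forces $|Y\setminus X|\ge|W\setminus X|\ge k=|X\setminus Y|$, contradicting $|X|>|Y|$. I expect the main obstacle to be the inductive union bound together with the bookkeeping that each cross-intersection lies entirely inside $Y$; once that bound is established, the final counting argument is clean.
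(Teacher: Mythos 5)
The paper itself does not prove Theorem \ref{matroid}; it quotes it from Edmonds \cite{e1} (pointing to \cite{book} and \cite{fuji}), so your argument has to stand on its own. Most of it does: the hereditary part, the analysis of a violating set (it must contain $x$, its $Y$-part $Y_x$ is nonempty, and tightness $b(Y_x\cup\{x\})=|Y_x|$ follows from monotonicity and integrality), and the final counting step are all correct. The genuine gap is the empty-intersection case of your inductive union bound. The hypothesis only gives $b\ge 0$ on \emph{nonempty} sets, and ``normalizing $b(\emptyset)=0$'' does \emph{not} preserve submodularity: for disjoint nonempty $A,B$, submodularity of the modified function would assert $b(A)+b(B)\ge b(A\cup B)$, which is strictly stronger than the true inequality $b(A)+b(B)\ge b(A\cup B)+b(\emptyset)$ whenever $b(\emptyset)<0$. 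This is exactly the situation in the paper's application: $b_{(\mathcal{H},\mathcal{M})}(\emptyset)=-r_{\mathcal{M}}(R)<0$, and for two dyperedges $a_1,a_2$ with disjoint vertex sets of size two in $V$ and no root incidences one has $b(\{a_1\})+b(\{a_2\})=2r_{\mathcal{M}}(R)<3r_{\mathcal{M}}(R)=b(\{a_1,a_2\})$, so subadditivity on disjoint sets genuinely fails. Since the sets $Y_{x_i}$ may be pairwise disjoint, your induction breaks precisely where it is needed, and the step $b(U_{m+1})\le |U_{m+1}|-(m+1)$ is not justified.

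The strategy is salvageable, but it needs an extra idea rather than the normalization. Group the indices by the connected components of the intersection graph of $Y_{x_1},\dots,Y_{x_k}$ (note $Z_i\cap Z_j=Y_{x_i}\cap Y_{x_j}$ for $i\ne j$). Within one component the $Z_i$ can be ordered so that every cross-intersection $U_m\cap Z_{m+1}$ is a nonempty subset of $Y$, so your induction applies and gives, for a component with $X_j\subseteq X\setminus Y$ and $W_j\subseteq Y$, the bound $b(W_j\cup X_j)\le |W_j|$; your monotonicity/counting step then yields $|W_j\setminus X|\ge |X_j|$ componentwise, and summing over the pairwise disjoint $W_j\subseteq Y$ gives $|Y\setminus X|\ge |X\setminus Y|$, the desired contradiction. (The cited sources instead prove the statement for intersecting submodular functions via the subpartition rank formula, which never evaluates $b$ at $\emptyset$.)
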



We now define a set family on $\mathcal{A}\cup\mathcal{E}$ by the function $b_{(\mathcal{H},\mathcal{M})}$ as follows: 
\smallskip

\centerline{{\boldmath$\mathcal{I}_{(\mathcal{H},\mathcal{M})}$} $=\{\mathcal{X} \subseteq \mathcal{A}\cup \mathcal{E}:b_{(\mathcal{H},\mathcal{M})}(\mathcal{Y})\geq |\mathcal{Y}|\text{ for all nonempty }  \mathcal{Y} \subseteq \mathcal{X}\}$.}
\smallskip

By Lemma \ref{submod} and Theorem \ref{matroid}, we have the following corollary. It was proven for the graphic case in a slightly different form in \cite{kst}.
\begin{theorem}\label{inde}
$\mathcal{I}_{(\mathcal{H},\mathcal{M})}$ is the set of independent sets of a matroid on $\mathcal{A}\cup \mathcal{E}$.
\end{theorem}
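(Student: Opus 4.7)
The plan is to apply Theorem \ref{matroid} directly to the function $b_{(\mathcal{H},\mathcal{M})}$ on the ground set $S=\mathcal{A}\cup\mathcal{E}$, which by definition yields exactly the family $\mathcal{I}_{(\mathcal{H},\mathcal{M})}$ as its set of independent sets. So the entire task reduces to verifying the four hypotheses of that theorem: integer-valuedness, monotonicity, submodularity, and nonnegativity on nonempty subsets of $\mathcal{A}\cup\mathcal{E}$.

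Three of these are immediate or already done. Integer-valuedness follows because $r_{\mathcal{M}}$ and $|\cdot|$ take integer values. Submodularity is precisely the content of Lemma \ref{submod}. For monotonicity, I would observe that if $\mathcal{X}\subseteq\mathcal{Y}\subseteq\mathcal{A}\cup\mathcal{E}$, then by the definitions $V_{\mathcal{X}}\subseteq V_{\mathcal{Y}}$ and $R_{\mathcal{X}}\subseteq R_{\mathcal{Y}}$, so $|V_{\mathcal{X}}|\leq |V_{\mathcal{Y}}|$ and, by monotonicity of the matroid rank function, $r_{\mathcal{M}}(R_{\mathcal{X}})\leq r_{\mathcal{M}}(R_{\mathcal{Y}})$. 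Adding the two inequalities (after multiplying the first by the nonnegative constant $r_{\mathcal{M}}(R)$ and shifting by the constant $-r_{\mathcal{M}}(R)$) gives $b_{(\mathcal{H},\mathcal{M})}(\mathcal{X})\leq b_{(\mathcal{H},\mathcal{M})}(\mathcal{Y})$.

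The only point requiring a brief invocation of the definition of a rooted mixed hypergraph is the nonnegativity claim. For any nonempty $\mathcal{X}\subseteq \mathcal{A}\cup\mathcal{E}$, I would argue that $|V_{\mathcal{X}}|\geq 1$: since every $r\in R$ satisfies $d_{\mathcal{A}}^-(r)=d_{\mathcal{E}}(r)=0$, each hyperedge of $\mathcal{E}$ is entirely contained in $V$ (and has size at least two), and each dyperedge of $\mathcal{A}$ has its head in $V$. Hence picking any element of $\mathcal{X}$ contributes at least one vertex of $V$ to $V_{\mathcal{X}}$. Consequently
\begin{equation*}
b_{(\mathcal{H},\mathcal{M})}(\mathcal{X})=r_{\mathcal{M}}(R)(|V_{\mathcal{X}}|-1)+r_{\mathcal{M}}(R_{\mathcal{X}})\geq 0.
\end{equation*}

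With all four hypotheses verified, Theorem \ref{matroid} immediately implies that $\mathcal{I}_{(\mathcal{H},\mathcal{M})}$ is the independent set family of a matroid on $\mathcal{A}\cup\mathcal{E}$. There is no real obstacle here; the only mildly nontrivial point is noticing that the structural assumption that $R$ consists of roots is exactly what is needed to guarantee $|V_{\mathcal{X}}|\geq 1$, and hence nonnegativity of $b_{(\mathcal{H},\mathcal{M})}$, for nonempty $\mathcal{X}$.
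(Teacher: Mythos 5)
Your proposal is correct and follows exactly the paper's route: verify the hypotheses of Theorem \ref{matroid} (integrality, monotonicity, submodularity via Lemma \ref{submod}, and nonnegativity on nonempty sets) and apply it to $b_{(\mathcal{H},\mathcal{M})}$. The paper states these properties without elaboration, whereas you spell out the monotonicity and the observation that the root property of $R$ forces $|V_{\mathcal{X}}|\geq 1$ for nonempty $\mathcal{X}$; this is a faithful, slightly more detailed version of the same argument.
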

%

We refer to the matroid defined in Theorem \ref{inde} as the {\it Katoh-Tanigawa matroid} {\boldmath${\mathcal K}_{(\mathcal{H},\mathcal{M})}$}. For the algorithmic application, we need to show that a polynomial time  independence oracle for ${\mathcal K}_{(\mathcal{H},\mathcal{M})}$ is available. As mentioned in \cite{kst}, a method to find a polynomial time independence oracle for ${\mathcal K}_{(\mathcal{H},\mathcal{M})}$ in the graphic case has been given in \cite{kt}. For the sake of completeness, we here show how to obtain a polynomial time independence oracle for ${\mathcal K}_{(\mathcal{H},\mathcal{M})}$. To do so we use  submodular function minimization  (\cite{separation, Iwata, s}). In fact we must minimize a submodular function on nonempty sets, so we need the following result of Goemans and Ramakrishnan   \cite{goemans}.

\begin{theorem}[\cite{goemans}]\label{minintsubm}
Let $S$ be a ground set and $b:2^S \rightarrow \mathbb{Z}$ a submodular function such that $b(X)$ can be computed in polynomial time for each $X\subseteq S.$ Then $\min_{\emptyset\neq X \subseteq S}b(X)$ can be found in polynomial time.
\end{theorem}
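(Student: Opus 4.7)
The plan is to reduce the constrained minimization problem (over nonempty subsets) to the ordinary, unconstrained submodular function minimization problem, which by the cited results of Schrijver and Iwata--Fleischer--Fujishige \cite{separation, Iwata, s} can be solved in polynomial time when the function is given by an oracle.

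First, for each element $s \in S$ I would define an auxiliary function $b_s : 2^{S \setminus \{s\}} \to \mathbb{Z}$ by $b_s(X) = b(X \cup \{s\})$. A direct verification shows that $b_s$ is submodular on $2^{S \setminus \{s\}}$: for any $X, Y \subseteq S \setminus \{s\}$, applying submodularity of $b$ to $X \cup \{s\}$ and $Y \cup \{s\}$ gives exactly $b_s(X) + b_s(Y) \ge b_s(X \cup Y) + b_s(X \cap Y)$. Moreover, since $b$ is evaluable in polynomial time, so is $b_s$.

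Next, I would invoke a standard polynomial time submodular function minimization algorithm on each $b_s$ to compute some $X_s^{\ast} \in \arg\min_{X \subseteq S \setminus \{s\}} b_s(X)$. By construction, $X_s^{\ast} \cup \{s\}$ is a nonempty subset of $S$, and $b(X_s^{\ast} \cup \{s\}) = \min \{b(Y) : s \in Y \subseteq S\}$. Finally I would return $\min_{s \in S} b(X_s^{\ast} \cup \{s\})$; since every nonempty $Y \subseteq S$ contains at least one element $s$, this minimum coincides with $\min_{\emptyset \neq X \subseteq S} b(X)$.

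This requires $|S|$ invocations of an unconstrained submodular minimization routine, each polynomial in $|S|$ and in the encoding length of $b$, so the overall running time is polynomial. There is no real obstacle here: the only subtlety is recognizing that ``forbid the empty set'' can be handled by fixing one element at a time, which preserves submodularity for free. (One could alternatively appeal directly to Goemans and Ramakrishnan \cite{goemans}, who handle the more delicate case of minimizing over proper nonempty subsets; the variant needed here is strictly easier.)
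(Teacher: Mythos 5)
Your proof is correct. Note that the paper itself does not prove this statement at all: it is quoted as a black-box result of Goemans and Ramakrishnan \cite{goemans}, so there is no in-paper argument to compare against. Your element-fixing reduction is the standard way to derive it from unconstrained submodular function minimization: the restriction $b_s(X)=b(X\cup\{s\})$ on $2^{S\setminus\{s\}}$ is indeed submodular (your verification via $X\cup\{s\}$ and $Y\cup\{s\}$ is exactly right, using $(X\cup\{s\})\cap(Y\cup\{s\})=(X\cap Y)\cup\{s\}$), every nonempty set contains some $s$, and $|S|$ oracle-polynomial calls to an unconstrained SFM routine \cite{separation, Iwata, s} remain polynomial. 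Your closing remark is also accurate: the real content of \cite{goemans} concerns minimization over more restrictive set families, and the nonempty-subset case needed here is the easy special case that your argument handles directly. The only (inessential) caveat is the degenerate situation $S=\emptyset$, where the minimum is over an empty family; this never arises in the paper's application.
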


We are now ready to give the desired polynomial time independence oracle for the Katoh-Tanigawa matroid.

\begin{lemma}\label{orachyp}
We can decide in polynomial time whether $\mathcal{X}\subseteq \mathcal{A}\cup\mathcal{E}$ is independent in the Katoh-Tanigawa matroid ${\mathcal K}_{(\mathcal{H},\mathcal{M})}$.
\end{lemma}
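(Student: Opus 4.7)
The plan is to reduce the independence check to the minimization of a submodular function on nonempty subsets and then invoke Theorem \ref{minintsubm}. Concretely, given $\mathcal{X}\subseteq\mathcal{A}\cup\mathcal{E}$, I consider the function $g:2^\mathcal{X}\to\mathbb{Z}$ defined by $g(\mathcal{Y})=b_{(\mathcal{H},\mathcal{M})}(\mathcal{Y})-|\mathcal{Y}|$. By definition of $\mathcal{I}_{(\mathcal{H},\mathcal{M})}$, we have $\mathcal{X}\in\mathcal{I}_{(\mathcal{H},\mathcal{M})}$ if and only if $g(\mathcal{Y})\ge 0$ for every nonempty $\mathcal{Y}\subseteq\mathcal{X}$, that is, if and only if $\min_{\emptyset\neq\mathcal{Y}\subseteq\mathcal{X}}g(\mathcal{Y})\ge 0$. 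So it suffices to compute this minimum in polynomial time and compare it with $0$.

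For this it remains to verify the two hypotheses of Theorem \ref{minintsubm} for $g$ restricted to $2^\mathcal{X}$. Submodularity of $g$ follows directly from Lemma \ref{submod} together with the modularity of the cardinality function. For the polynomial time evaluation, given $\mathcal{Y}\subseteq\mathcal{X}$, the sets $V_\mathcal{Y}$ and $R_\mathcal{Y}$ can clearly be read off in polynomial time from $\mathcal{Y}$, so that $|V_\mathcal{Y}|$ and $|\mathcal{Y}|$ are trivially available; the quantities $r_\mathcal{M}(R)$ and $r_\mathcal{M}(R_\mathcal{Y})$ can be obtained in polynomial time using the  independence oracle for $\mathcal{M}$ (which yields a polynomial time rank oracle by the standard greedy algorithm). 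Hence $g(\mathcal{Y})$ can be computed in polynomial time for every $\mathcal{Y}\subseteq\mathcal{X}$.

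With both hypotheses of Theorem \ref{minintsubm} in place, I can compute $\min_{\emptyset\neq\mathcal{Y}\subseteq\mathcal{X}}g(\mathcal{Y})$ in polynomial time; the membership test for the Tanigawa matroid is then a single sign check. The only slightly delicate point is that the minimum is taken over nonempty subsets, which is precisely why Theorem \ref{minintsubm} (rather than standard submodular function minimization) is needed; beyond that, the argument is routine.
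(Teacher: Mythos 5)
Your proposal is correct and follows essentially the same route as the paper: define $b'(\mathcal{Y})=b_{(\mathcal{H},\mathcal{M})}(\mathcal{Y})-|\mathcal{Y}|$, observe it is submodular and polynomial-time evaluable via the independence oracle for $\mathcal{M}$, minimize over nonempty subsets of $\mathcal{X}$ using Theorem \ref{minintsubm}, and test the sign. The only (harmless) difference is that you spell out the evaluation of $r_{\mathcal{M}}(R_\mathcal{Y})$ via the greedy algorithm, which the paper leaves implicit.
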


\begin{proof}
Let {\boldmath$b'({\mathcal{X}})$} $=b_{(\mathcal{H},\mathcal{M})}({\mathcal{X}})-|{\mathcal{X}}|$ for all $\mathcal{X}\subseteq \mathcal{A}\cup\mathcal{E}.$ As a polynomial time independence oracle for $\mathcal{M}$ is available, $b_{(\mathcal{H},\mathcal{M})}({\mathcal{X}})$ and hence $b'(\mathcal{X})$ can be computed in polynomial time for all $\mathcal{X}\subseteq \mathcal{A}\cup\mathcal{E}.$ Since, by Lemma \ref{submod}, the function $b_{(\mathcal{H},\mathcal{M})}$ is submodular, so is $b'.$ Therefore, by Theorem \ref{minintsubm}, we can compute $\min_{\emptyset\neq\mathcal{Y}\subseteq\mathcal{X}}b'(\mathcal{Y})$ in polynomial time. Further, by definition, $\mathcal{X}$ is independent in $\mathcal{K}_{(\mathcal{H},\mathcal{M})}$ if and only if $\min_{\emptyset\neq\mathcal{Y} \subseteq \mathcal{X}}b'(\mathcal{Y})\geq 0$. This finishes the proof.
\end{proof}
 
We need another matroid which is obtained by modifying the Katoh-Tanigawa matroid via parallel extensions. The ground set of the {\it extended Katoh-Tanigawa matroid} {\boldmath${\mathcal K}^{ex}_{(\mathcal{H},\mathcal{M})}$} is $\mathcal{A}\cup \mathcal{A}_\mathcal{E}$ and it is constructed from the Katoh-Tanigawa matroid $\mathcal{K}_{(\mathcal{H},\mathcal{M})}$ by replacing every $e \in \mathcal{E}$ by $|e|$ parallel copies of itself, associating them to the elements in $\mathcal{A}_e$. The following is an immediate corollary of the construction of the extended Katoh-Tanigawa matroid and Lemma \ref{orachyp}.

\begin{corollary}\label{oracmixhyp}
We can decide in polynomial time whether $\mathcal{X}\subseteq \mathcal{A}\cup \mathcal{A}_\mathcal{E}$ is independent in the extended Katoh-Tanigawa matroid ${\mathcal K}^{ex}_{(\mathcal{H},\mathcal{M})}$.
\end{corollary}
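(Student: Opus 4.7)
The plan is to exploit the fact that $\mathcal{T}^{ex}_{(\mathcal{H},\mathcal{M})}$ is by construction a parallel extension of $\mathcal{T}_{(\mathcal{H},\mathcal{M})}$, and reduce an independence query in the extended matroid to an independence query in the base matroid, where Lemma~\ref{orachyp} already supplies a polynomial time oracle.

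First I would perform the \emph{bundle check}: for every $e \in \mathcal{E}$, inspect $\mathcal{X} \cap \mathcal{A}_e$, and if some bundle contributes two or more elements, output ``dependent'' and stop. This is justified because the elements of each bundle $\mathcal{A}_e$ were introduced as parallel copies of $e$, so any two of them are dependent in $\mathcal{T}^{ex}_{(\mathcal{H},\mathcal{M})}$. This scan runs in time polynomial in $|\mathcal{X}|$ and $|\mathcal{E}|$ and uses no oracle calls.

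If no bundle is hit twice, define the contraction $\phi\colon \mathcal{A}\cup\mathcal{A}_\mathcal{E} \to \mathcal{A}\cup\mathcal{E}$ by $\phi(a)=a$ for $a\in\mathcal{A}$ and $\phi(a)=e$ for the unique $e$ with $a\in\mathcal{A}_e$, and put $\mathcal{X}'=\phi(\mathcal{X})$. Since $\mathcal{X}$ meets each bundle in at most one element, $\phi$ is injective on $\mathcal{X}$, hence $|\mathcal{X}'|=|\mathcal{X}|$. By the general fact that independence in a parallel extension of a matroid reduces, on a set containing at most one copy per parallel class, to independence of the image in the original matroid, we have that $\mathcal{X}$ is independent in $\mathcal{T}^{ex}_{(\mathcal{H},\mathcal{M})}$ if and only if $\mathcal{X}'$ is independent in $\mathcal{T}_{(\mathcal{H},\mathcal{M})}$. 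I would then invoke Lemma~\ref{orachyp} on $\mathcal{X}'$ to conclude.

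I do not anticipate any real obstacle: the only conceptual ingredient is that parallel extensions trivially inherit a polynomial time independence oracle, and everything else is a direct call to a result already established in the paper. The bundle scan and the single invocation of Lemma~\ref{orachyp} both run in polynomial time, so the overall procedure does as well.
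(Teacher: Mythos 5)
Your proof is correct and matches the paper's intent: the paper states Corollary \ref{oracmixhyp} as an immediate consequence of the parallel-extension construction together with Lemma \ref{orachyp}, and your bundle check followed by an independence query on the image set in $\mathcal{T}_{(\mathcal{H},\mathcal{M})}$ is precisely the routine argument being left implicit. No gaps.
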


We finally adapt a matroid construction already used  by Edmonds to rooted dypergraphs. Given a simply rooted dypergraph $\mathcal{D}=(V \cup R, \mathcal{A})$ and $k\in \mathbb{Z}_+$, the {\it entering matroid} {\boldmath$\mathcal{M}^{\oplus}_{(\mathcal{D},k)}$}  is the direct sum of the uniform matroids of rank $\min\{k,d_\mathcal{A}^-(v)\}$ on $\rho_{\mathcal{A}}^-(v)$ for all $v \in V$. Observe that the ground set of this matroid is $\mathcal{A}$ as $R$ is a set of roots.

\subsubsection{Characterizations via matroid intersection}

We first review a slight extension of a result of \cite{kst} characterizing matroid-based packings of hyperarborescences in simply rooted dypergraphs as the intersection of two matroids. We then conclude a generalization to matroid-based packings of mixed hyperarborescences from it.

The following result was proven in \cite[Theorem 4.1]{kst} for the digraphic case in a slightly different form. Its proof can be literally generalized to the case of dypergraphs.

\begin{theorem}\label{dyper}
Let $(\mathcal{D}=(V\cup R, {\cal A}),\mathcal{M}=(R,r_{\mathcal{M}}))$ be a simply matroid-rooted dypergraph and $k=r_{\mathcal{M}}(R)$. Then the dyperedge sets of the matroid-based packings of hyperarborescences in $(\mathcal{D},\mathcal{M})$ are exactly the common independent sets of size $k|V|$ of the Katoh-Tanigawa matroid $\mathcal{K}_{(\mathcal{D},\mathcal{M})}$ and the entering matroid $\mathcal{M}^{\oplus}_{(\mathcal{D},k)}$.
\end{theorem}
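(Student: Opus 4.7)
The proof will mirror \cite[Theorem 4.1]{kst}, which handled the digraph case, and I will establish both inclusions of the claimed equality.

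For the forward inclusion, given a matroid-based packing $\{\mathcal{B}_r\}_{r\in R}$ with dyperedge set $\mathcal{X}$, I first use that $R$ consists of simple roots (so $V(\mathcal{B}_r)\cap R=\{r\}$ for every $r$) to get $|\mathcal{A}(\mathcal{B}_r)|=|V(\mathcal{B}_r)|-1=|V(\mathcal{B}_r)\cap V|$, and then sum against the matroid-based condition (each $v\in V$ lies in exactly $k$ arborescences) to obtain $|\mathcal{X}|=k|V|$. Independence in $\mathcal{M}^{\oplus}_{(\mathcal{D},k)}$ follows immediately because each $v\in V$ receives exactly one entering dyperedge per arborescence containing it, so at most $k$ in total. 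Independence in $\mathcal{T}_{(\mathcal{D},\mathcal{M})}$ requires showing $|\mathcal{Y}|\le k(|V_\mathcal{Y}|-1)+r_\mathcal{M}(R_\mathcal{Y})$ for every nonempty $\mathcal{Y}\subseteq\mathcal{X}$. I would split $\mathcal{Y}$ by arborescence into $\mathcal{Y}_r:=\mathcal{Y}\cap\mathcal{A}(\mathcal{B}_r)$, bound each $|\mathcal{Y}_r|$ by the sub-forest structure of the trimming $B_r$ (so $|\mathcal{Y}_r|\le |V_\mathcal{Y}|-1$ when the root $r$ is not an endpoint of any trimming arc of $\mathcal{Y}_r$, and $|\mathcal{Y}_r|\le|V_\mathcal{Y}|$ otherwise), and then charge the extra units to the roots of $R_\mathcal{Y}$, using the matroid-based property to see that the collection of root-contributing arborescences injects into an independent set of $\mathcal{M}|R_\mathcal{Y}$ and hence has cardinality at most $r_\mathcal{M}(R_\mathcal{Y})$.

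For the backward inclusion, I would consider the sub-matroid-rooted dypergraph $(\mathcal{D}'=(V\cup R,\mathcal{X}),\mathcal{M})$ and invoke the existence characterization for matroid-based packings in simply matroid-rooted dypergraphs, namely the dypergraph analog of Theorem \ref{mat} that underlies Theorem \ref{algofkmst}. The task reduces to verifying the cut condition: for every $X\subseteq V\cup R$ with $X\cap V\neq\emptyset$ and $X\cap R=N_{\mathcal{D}'}^-(X\cap V)\cap R$, one needs $d_\mathcal{X}^-(X)\ge r_\mathcal{M}(R)-r_\mathcal{M}(X\cap R)$. The idea is to apply Tanigawa-independence to the dyperedges induced on $X$ (those with both head in $X$ and tail contained in $X\cup(R-X\cap R)$-compatible sets), and then combine this with the size-tightness $|\mathcal{X}|=k|V|$ and the per-vertex equality $d_\mathcal{X}^-(v)=k$ for all $v\in V$ (a consequence of $|\mathcal{X}|=k|V|$ and entering-matroid independence) to extract the required lower bound on $d_\mathcal{X}^-(X)$.

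The main obstacle is the Tanigawa-bound verification, both explicitly in the forward direction and implicitly in the backward one. It hinges on correctly matching the quantity $r_\mathcal{M}(R_\mathcal{Y})$ against the number of arborescences whose root is incident to some dyperedge of $\mathcal{Y}$, which is possible because their roots form an independent set in $\mathcal{M}|R_\mathcal{Y}$ by the matroid-based property at any vertex in $V_\mathcal{Y}$. The dypergraphic aspect introduces no genuinely new difficulty: since dyperedges still have single-vertex heads, the entering matroid and the per-vertex entering count carry over unchanged, and since the trimming sends dyperedges to arcs one-for-one, the forest-based bound on $|\mathcal{Y}_r|$ remains valid after identifying arcs with their underlying dyperedges, so the entire \cite{kst} argument goes through literally.
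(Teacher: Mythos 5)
The paper itself only points to \cite{kst} for this statement, so I am comparing your reconstruction against the argument of \cite[Theorem 4.1]{kst} that the authors claim generalizes literally. Your overall architecture is the right one: the size count $|\mathcal{X}|=k|V|$, independence in $\mathcal{M}^{\oplus}_{(\mathcal{D},k)}$ via per-head in-degrees, and the backward direction (deduce $d_{\mathcal{X}}^-(v)=k$ for all $v\in V$, verify the cut condition for $(V\cup R,\mathcal{X})$ using Tanigawa-independence of the set of dyperedges with head in $X\cap V$ and tail inside $X$, then invoke the existence theorem for matroid-based packings in dypergraphs) all go through as you sketch them. The genuine gap is in your verification that $\mathcal{X}$ is independent in $\mathcal{T}_{(\mathcal{D},\mathcal{M})}$. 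You sum the bound $|\mathcal{Y}_r|\le|V_{\mathcal{Y}}|-1$ (resp.\ $|V_{\mathcal{Y}}|$) over \emph{all} arborescences meeting $\mathcal{Y}$, but nothing bounds the number of such arborescences by $k$; and the set of roots incident to $\mathcal{Y}$ is \emph{not} in general independent in $\mathcal{M}$, because the matroid-based property only forces independence of the roots of arborescences through a \emph{common} vertex. Concretely, take $V=\{v_1,v_2\}$, $R=\{r_1,r_2\}$ with $r_1,r_2$ parallel in $\mathcal{M}$ (so $k=1$), $B_{r_1}=r_1v_1$, $B_{r_2}=r_2v_2$, and $\mathcal{Y}=\mathcal{X}=\{r_1v_1,r_2v_2\}$: two arborescences meet $\mathcal{Y}$ although $k=1$, the set of root-contributing arborescences has cardinality $2$ while $r_{\mathcal{M}}(R_{\mathcal{Y}})=1$, and your chain of inequalities only yields $|\mathcal{Y}|\le 2(|V_{\mathcal{Y}}|-1)+2=4$, which does not establish the required $|\mathcal{Y}|\le k(|V_{\mathcal{Y}}|-1)+r_{\mathcal{M}}(R_{\mathcal{Y}})=2$.

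The step has to be done by counting heads rather than arborescences. Every dyperedge of $\mathcal{Y}$ has its head in $V_{\mathcal{Y}}$ and every $v\in V$ has exactly $k$ entering dyperedges in $\mathcal{X}$, so $|\mathcal{Y}|\le k|V_{\mathcal{Y}}|$ and it suffices to exhibit $k-r_{\mathcal{M}}(R_{\mathcal{Y}})$ dyperedges of $\mathcal{X}-\mathcal{Y}$ with head in $V_{\mathcal{Y}}$. Fix $v_0\in V_{\mathcal{Y}}$ and let $Q$ be the set of roots of the arborescences containing $v_0$; since $Q$ is a basis of $\mathcal{M}$ and $Q\cap R_{\mathcal{Y}}$ is an independent subset of $R_{\mathcal{Y}}$, we get $|Q-R_{\mathcal{Y}}|\ge k-r_{\mathcal{M}}(R_{\mathcal{Y}})$. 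For each $r\in Q-R_{\mathcal{Y}}$, walk along the $r\rightarrow v_0$ path in the trimming $B_r$ and let $a_r$ be the dyperedge whose head is the first path vertex lying in $V_{\mathcal{Y}}$; the preceding path vertex lies in $\{r\}\cup(V-V_{\mathcal{Y}})$ and belongs to $tail(a_r)$, so $a_r\notin\mathcal{Y}$ (otherwise that vertex would lie in $V_{\mathcal{Y}}\cup R_{\mathcal{Y}}$), and the $a_r$ are pairwise distinct because the packing is dyperedge-disjoint. This is precisely where the matroid-based property is used, and it is the one place where your write-up substitutes an argument that fails; everything else, including the dypergraph-specific bookkeeping, is fine.
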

 
 Recall that a packing of hyperarborescences in the directed extension $\mathcal{D}_{\mathcal{H}}$ of a rooted mixed hypergraph $\mathcal{H}$ is {\it feasible} if it contains at most one dyperedge of the bundle $\mathcal{A}_e$ for  every hyperedge $e$ of $\mathcal{H}$. The following is an easy corollary of Theorem \ref{dyper}.
 
\begin{corollary}\label{vtgiil}
 Let $(\mathcal{H}=(V \cup R, \mathcal{A}\cup \mathcal{E}),\mathcal{M}=(R,r_{\mathcal{M}}))$ be a simply matroid-rooted mixed hypergraph,  $k$ $=r_{\mathcal{M}}(R)$ and $\mathcal{D}_{\mathcal{H}}=(V \cup R, \mathcal{A}\cup \mathcal{A}_\mathcal{E})$ the directed extension of $\mathcal{H}$.The dyperedge sets of the feasible matroid-based packings of hyperarborescences in $(\mathcal{D}_{\mathcal{H}},\mathcal{M})$ are exactly the common independent sets of size $k|V|$ of the extended  Katoh-Tanigawa matroid $\mathcal{K}^{ex}_{({\mathcal{H}},\mathcal{M})}$ and the entering matroid $\mathcal{M}^{\oplus}_{(\mathcal{D}_{\mathcal{H}},k)}$.
\end{corollary}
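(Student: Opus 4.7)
The plan is to derive Corollary~\ref{vtgiil} as a direct application of Theorem~\ref{dyper} to the directed extension $\mathcal{D}_\mathcal{H}$. First I would apply Theorem~\ref{dyper} to the simply matroid-rooted dypergraph $(\mathcal{D}_\mathcal{H},\mathcal{M})$ with parameter $k=r_{\mathcal{M}}(R)$. This immediately identifies the dyperedge sets of matroid-based packings of hyperarborescences in $(\mathcal{D}_\mathcal{H},\mathcal{M})$ with the common independent sets of size $k|V|$ of $\mathcal{T}_{(\mathcal{D}_\mathcal{H},\mathcal{M})}$ and $\mathcal{M}^{\oplus}_{(\mathcal{D}_\mathcal{H},k)}$. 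The remaining task is then to show that restricting to the feasible packings corresponds exactly to replacing $\mathcal{T}_{(\mathcal{D}_\mathcal{H},\mathcal{M})}$ by the extended Tanigawa matroid $\mathcal{T}^{ex}_{(\mathcal{H},\mathcal{M})}$.

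To carry this out I would introduce the natural projection $\phi:\mathcal{A}\cup\mathcal{A}_\mathcal{E}\to\mathcal{A}\cup\mathcal{E}$ given by $\phi(a)=a$ for $a\in\mathcal{A}$ and $\phi(a)=e$ whenever $a\in\mathcal{A}_e$. Because every orientation $a\in\mathcal{A}_e$ is incident to exactly the vertices of $e$, one has $V_\mathcal{X}=V_{\phi(\mathcal{X})}$ and $R_\mathcal{X}=R_{\phi(\mathcal{X})}$ for all $\mathcal{X}\subseteq\mathcal{A}\cup\mathcal{A}_\mathcal{E}$, and hence $b_{(\mathcal{D}_\mathcal{H},\mathcal{M})}(\mathcal{X})=b_{(\mathcal{H},\mathcal{M})}(\phi(\mathcal{X}))$. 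If $\mathcal{X}$ is in addition feasible, then $\phi$ is injective on $\mathcal{X}$ and hence on every subset of $\mathcal{X}$, so $|\mathcal{Y}|=|\phi(\mathcal{Y})|$ for all $\mathcal{Y}\subseteq\mathcal{X}$. Comparing with the defining family $\mathcal{I}_{(\cdot)}$, this yields that a feasible $\mathcal{X}$ is independent in $\mathcal{T}_{(\mathcal{D}_\mathcal{H},\mathcal{M})}$ if and only if $\phi(\mathcal{X})$ is independent in $\mathcal{T}_{(\mathcal{H},\mathcal{M})}$, which by the construction of $\mathcal{T}^{ex}_{(\mathcal{H},\mathcal{M})}$ through parallel copies is exactly independence of $\mathcal{X}$ in $\mathcal{T}^{ex}_{(\mathcal{H},\mathcal{M})}$.

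For the converse I would observe that any set $\mathcal{X}$ independent in $\mathcal{T}^{ex}_{(\mathcal{H},\mathcal{M})}$ is automatically feasible, since for every $e\in\mathcal{E}$ the elements of the bundle $\mathcal{A}_e$ are parallel in $\mathcal{T}^{ex}_{(\mathcal{H},\mathcal{M})}$ and hence at most one of them can lie in any independent set. Combining these remarks with Theorem~\ref{dyper} gives the desired bijection between feasible matroid-based packings of hyperarborescences in $(\mathcal{D}_\mathcal{H},\mathcal{M})$ and common independent sets of size $k|V|$ of $\mathcal{T}^{ex}_{(\mathcal{H},\mathcal{M})}$ and $\mathcal{M}^{\oplus}_{(\mathcal{D}_\mathcal{H},k)}$.

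The only place that requires care is the transfer between the submodular function $b$ and the independence family it induces: the identity $b_{(\mathcal{D}_\mathcal{H},\mathcal{M})}(\mathcal{X})=b_{(\mathcal{H},\mathcal{M})}(\phi(\mathcal{X}))$ only becomes useful once combined with the cardinality equality $|\mathcal{Y}|=|\phi(\mathcal{Y})|$, which in turn relies essentially on feasibility. Beyond this point the argument is a bookkeeping exercise, since the Tanigawa matroid and its extension share the same underlying submodular data up to parallel duplication of the hyperedges, so no substantive obstacle arises.
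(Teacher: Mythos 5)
Your proposal is correct and follows essentially the same route as the paper: apply Theorem~\ref{dyper} to $(\mathcal{D}_\mathcal{H},\mathcal{M})$ and then translate between independence in $\mathcal{T}_{(\mathcal{D}_\mathcal{H},\mathcal{M})}$ and in $\mathcal{T}^{ex}_{(\mathcal{H},\mathcal{M})}$ via feasibility and the parallel-copy construction. Your explicit verification through the projection $\phi$ and the identity $b_{(\mathcal{D}_\mathcal{H},\mathcal{M})}(\mathcal{X})=b_{(\mathcal{H},\mathcal{M})}(\phi(\mathcal{X}))$ merely spells out a step the paper leaves implicit.
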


\begin{proof}
First let {\boldmath$\mathcal{A}'$} be the dyperedge set of a feasible matroid-based packing of hyperarborescences in $(\mathcal{D}_{\mathcal{H}},\mathcal{M})$. By Theorem \ref{dyper}, $\mathcal{A}'$ is a common independent set of size $k|V|$ of the Katoh-Tanigawa matroid ${\mathcal K}_{(\mathcal{D}_{\mathcal{H}},\mathcal{M})}$ and  the entering matroid $\mathcal{M}^{\oplus}_{(\mathcal{D}_{\mathcal{H}},k)}$. As the packing is feasible, $\mathcal{A}'$ contains at most one dyperedge of the bundle $\mathcal{A}_e$ for all $e \in \mathcal{E}$. It follows that $\mathcal{A}'$ is also independent in the extended  Katoh-Tanigawa matroid $\mathcal{K}^{ex}_{({\mathcal{H}},\mathcal{M})}$. 

Now let {\boldmath$\mathcal{A}'$} be a common independent set of size $k|V|$ of the extended  Katoh-Tanigawa matroid $\mathcal{K}^{ex}_{({\mathcal{H}},\mathcal{M})}$ and the entering matroid $\mathcal{M}^{\oplus}_{(\mathcal{D}_{\mathcal{H}},k)}$. It follows that $\mathcal{A}'$ is also an independent set  of the Katoh-Tanigawa matroid  ${\mathcal K}_{(\mathcal{D}_{\mathcal{H}},\mathcal{M})}$. Then, by Theorem \ref{dyper}, $\mathcal{A}'$ is the dyperedge set of a matroid-based packing of hyperarborescences in $(\mathcal{D}_{\mathcal{H}},\mathcal{M})$. As $\mathcal{A}'$ is independent in the extended  Katoh-Tanigawa matroid $\mathcal{K}^{ex}_{({\mathcal{H}},\mathcal{M})}$, $\mathcal{A}'$  contains at most one dyperedge of the bundle $\mathcal{A}_e$ for all $e \in \mathcal{E}$ and so  the packing is feasible.
\end{proof}

 It is easy to see  the following relation between matroid-based packings of mixed hyperarborescences in $(\mathcal{H},\mathcal{M})$ and feasible matroid-based packings of hyperarborescences in $(\mathcal{D}_{\mathcal{H}},\mathcal{M})$.

 \begin{lemma}\label{vdgdfze}
Let $(\mathcal{H},\mathcal{M})=(V \cup R, \mathcal{A} \cup \mathcal{E})$ be a matroid-rooted mixed hypergraph. Then for any $\mathcal{A}' \subseteq \mathcal{A}$ and $\mathcal{E}' \subseteq \mathcal{E}$ the following are equivalent:
\begin{itemize}
	\item[(a)] There exists a matroid-based packing $\mathcal{B}$ of mixed hyperarborescences in $(\mathcal{H},\mathcal{M})$ such that the set of dyperedges of $\mathcal{B}$ is $\mathcal{A}'$ and the set of hyperedges of $\mathcal{B}$ is $\mathcal{E}'$,
	\item[(b)] There exists an orientation $\vec{\mathcal{E}}$ of $\mathcal{E}$ such that in the matroid-rooted dypergraph obtained there exists a matroid-based packing of hyperarborescences whose dyperedge set is $\mathcal{A}'\cup\vec{\mathcal{E}'}.$
	\item[(c)]  There exists a feasible matroid-based  packing $\mathcal{B}$ of hyperarborescences in $(\mathcal{D}_\mathcal{H},\mathcal{M})$ such that the set of dyperedges of $\mathcal{A}$ contained in $\mathcal{B}$ is $\mathcal{A}'$ and $\{e\in\mathcal{E}: \mathcal{B} \text{ contains a dyperedge of }\mathcal{A}_e\}=\mathcal{E}'.$
\end{itemize}
\end{lemma}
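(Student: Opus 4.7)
The plan is to establish the cycle (a)$\Rightarrow$(b)$\Rightarrow$(c)$\Rightarrow$(a); each implication is essentially bookkeeping. The substantive observation underlying every step is that trimming a hyperedge $e$ to an arc $uv$ factors naturally as first orienting $e$ to the dyperedge $(e-v,v)\in\mathcal{A}_e$ and then selecting $u$ as the tail vertex of the resulting arc. Once this factorisation is clear, every direction reduces to transferring trimmings between the three settings while preserving arc-disjointness and the matroid-based condition on the underlying $r$-arborescences $B_r$.

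For (a)$\Rightarrow$(b), I would take the matroid-based packing $\mathcal{B}=\{\mathcal{B}_r\}_{r\in R}$ from (a) together with the trimmings $B_r$ guaranteed by the definition. For each $e\in\mathcal{E}'$ the trimming provides an arc $uv$ with $u,v\in e$, and I would use it to orient $e$ as $(e-v,v)$; hyperedges in $\mathcal{E}-\mathcal{E}'$ I would orient arbitrarily. In the resulting dypergraph, replacing each used hyperedge of $\mathcal{B}_r$ by its oriented copy turns $\mathcal{B}_r$ into a hyperarborescence that still admits $B_r$ as a trimming. Arc-disjointness is inherited from the original mixed packing, so this produces the required matroid-based packing of hyperarborescences with dyperedge set $\mathcal{A}'\cup\vec{\mathcal{E}'}$.

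For (b)$\Rightarrow$(c), since $\vec{\mathcal{E}}\subseteq\mathcal{A}_\mathcal{E}$, the packing from (b) already lives in $(\mathcal{D}_\mathcal{H},\mathcal{M})$ with its trimmings unchanged, so it remains matroid-based. Feasibility is immediate because for each $e\in\mathcal{E}$ the only element of the bundle $\mathcal{A}_e$ that can appear in the packing is the fixed orientation $\vec{e}\in\vec{\mathcal{E}}$, and the descriptions of the $\mathcal{A}$-part and of the hyperedges with a chosen representative follow from the construction. Conversely, for (c)$\Rightarrow$(a), I would undo the orientations by replacing, in every $\mathcal{B}_r$, each dyperedge $(e-v,v)\in\mathcal{A}_e$ that is used by the underlying hyperedge $e$. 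Feasibility guarantees that each such $e$ is replaced at most once across the whole packing, so arc-disjointness survives, and the original trimming $B_r$ remains valid since an arc $uv$ coming from $(e-v,v)$ satisfies $u,v\in e$ and thereby trims $e$.

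I do not anticipate a genuine obstacle here: the lemma is a dictionary between three equivalent encodings of the same object, and the only thing worth watching is that the basis condition $\{r\in R:v\in V(B_r)\}$ at each vertex depends only on the arborescences $B_r$, which are preserved verbatim across all three translations. The intermediate condition (b) is precisely designed to isolate the choice-of-orientation step from the choice-of-tail step, so that (a)$\Leftrightarrow$(b) captures the first choice and (b)$\Leftrightarrow$(c) merely re-expresses the set of allowable orientations as feasibility in the directed extension.
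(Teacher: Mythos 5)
Your proof is correct and supplies exactly the routine verification that the paper leaves implicit (the lemma is stated there without proof, prefaced only by ``It is easy to see''). The factorisation of a trimming of a hyperedge into an orientation step followed by a tail-selection step, together with the observation that feasibility is what guarantees each hyperedge is recovered at most once in the direction (c)$\Rightarrow$(a) and that the basis condition depends only on the unchanged arborescences $B_r$, are precisely the points that need checking.
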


Observe that Lemma \ref{vdgdfze} and Corollary \ref{vtgiil} establish a connection between matroid intersection and matroid-based packings of mixed hyperarborescences.

\subsubsection{The algorithm}

To provide the desired algorithm, we  need the following result on matroid intersection of Edmonds \cite{e2}.

\begin{theorem}[\cite{e2}]\label{matroidintersection}
Let ${\mathcal M}_1$ and ${\mathcal M}_2$ be two matroids on a common ground set $S$  with polynomial time independence oracles for ${\mathcal M}_1$ and ${\mathcal M}_2$ being available, $w:S\rightarrow \mathbb{R}$ a weight function and $\mu\in \mathbb{Z}_+$. It can be decided in polynomial time if a common independent set of size $\mu$ of ${\mathcal M}_1$ and ${\mathcal M}_2$ exists. Further, if such a set exists, then one of minimum weight with respect to $w$ can be computed in polynomial time.
\end{theorem}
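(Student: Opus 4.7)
The plan is to prove Theorem \ref{matroidintersection} via the augmenting-path framework on the exchange graph, which is the standard route for Edmonds' matroid intersection algorithm. First I would define, for any common independent set $I \subseteq S$, the exchange digraph $G_I$ on vertex set $S$ with arcs $xy$ whenever $x \in I$, $y \in S \setminus I$, and $I - x + y \in \mathcal{I}(\mathcal{M}_1)$, and arcs $yx$ whenever $I - x + y \in \mathcal{I}(\mathcal{M}_2)$. I would single out source and sink sets $X_1 = \{y \notin I : I + y \in \mathcal{I}(\mathcal{M}_1)\}$ and $X_2 = \{y \notin I : I + y \in \mathcal{I}(\mathcal{M}_2)\}$. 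All the required arcs and membership tests can be implemented with a polynomial number of calls to the independence oracles.

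The central lemma for the cardinality version asserts that if $P$ is a shortest directed $X_1$--$X_2$ path in $G_I$, then $I \triangle V(P)$ is again a common independent set, one larger than $I$; and conversely, if no such path exists, the set $U$ of vertices reachable from $X_1$ yields a certificate $r_{\mathcal{M}_1}(U) + r_{\mathcal{M}_2}(S \setminus U) = |I|$, matching Edmonds' min-max formula, so $I$ is maximum. The non-trivial direction is the augmentation: one must use the shortest-path hypothesis to rule out short-circuiting chords, and then invoke the symmetric exchange axiom to verify that $I \triangle V(P)$ stays independent in both matroids. Iterating this until no augmenting path remains or until the desired size $\mu$ is reached settles the decision problem in polynomial time, since each phase needs $O(|S|^2)$ oracle calls and we perform at most $|S|$ augmentations.

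For the weighted version I would refine the augmentation by assigning node weights $+w(y)$ on $S \setminus I$ and $-w(x)$ on $I$, and at each step selecting, among all shortest $X_1$--$X_2$ paths (fewest arcs), one of minimum total node weight; this lexicographic choice can be computed as a shortest-path problem in the exchange graph. The invariant to maintain is that after the $k$-th augmentation, $I_k$ is a minimum-weight common independent set of size $k$. The main obstacle is proving this invariant: given any other common independent set $J$ of size $k+1$, one analyzes $I_k \triangle J$, which decomposes into vertex-disjoint paths and cycles in $G_{I_k}$ whose node-weight sums compare to the chosen augmenting path, forcing $w(I_{k+1}) \leq w(J)$. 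This exchange argument, together with the fact that the lexicographic shortest path both preserves feasibility and attains the minimum weight among all length-increasing augmentations, is the technical heart of the proof.

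Once the invariant is established, the weighted optimization follows: either we reach size $\mu$ with optimal weight, or augmentation halts earlier, in which case the certificate above proves that no common independent set of size $\mu$ exists. The total running time is polynomial since each augmentation requires polynomially many oracle calls to build $G_I$ and compute the lexicographic shortest path, and there are at most $|S|$ augmentations overall.
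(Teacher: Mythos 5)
The paper does not prove this statement at all: Theorem \ref{matroidintersection} is imported as a black-box citation to Edmonds' work on matroid intersection \cite{e2}, so there is no in-paper argument to compare against. Your outline is the standard proof of that classical result: the exchange digraph $G_I$, the source/sink sets $X_1,X_2$, augmentation along shortest $X_1$--$X_2$ paths for the cardinality version with the reachable-set certificate $r_{\mathcal{M}_1}(U)+r_{\mathcal{M}_2}(S\setminus U)=|I|$ when no path exists, and the lexicographic (fewest arcs, then minimum node weight) rule for the weighted version. This is the correct architecture, and every step you list is implementable with polynomially many oracle calls. Be aware, though, that as written it is a plan rather than a proof: the two statements you correctly identify as ``the technical heart'' --- that $I\triangle V(P)$ remains independent in both matroids for a shortest (chordless) augmenting path, and that choosing a minimum-weight shortest augmenting path preserves the invariant that $I_k$ is a minimum-weight common independent set of its size --- are exactly where all the work lies (unique-perfect-matching arguments on the bipartite exchange graphs of each matroid, and the no-negative-cycle property of $G_{I_k}$ under the optimality invariant), and neither is carried out. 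If the intent is to cite Edmonds as the paper does, your summary is accurate; if the intent is to supply a self-contained proof, those two lemmas still need to be proved.
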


We are now ready to show how Theorems \ref{vtgiil} and \ref{matroidintersection} imply the 
following  theorem which is the 
main result of this section.

\begin{theorem}\label{algo2}
Let $(\mathcal{H}=(V \cup R, \mathcal{A}\cup \mathcal{E}),\mathcal{M}=(R,r_{\mathcal{M}}))$ be a matroid-rooted mixed hypergraph with a polynomial time independence oracle for $\mathcal{M}$ being available and $w: \mathcal{A}\cup \mathcal{E} \rightarrow \mathbb{R}$ a weight function. We can decide in polynomial time whether a matroid-based packing of mixed hyperarborescences in $(\mathcal{H}, \mathcal{M})$ exists. Further, if such a packing exists, then we can compute one of minimum weight with respect to $w$ in polynomial time.
\end{theorem}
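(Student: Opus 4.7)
The plan is to reduce the problem to weighted matroid intersection, relying on the characterizations built up in the preceding subsections. First, invoke the construction from the proof of Theorem \ref{matmixhypbranch} to replace $(\mathcal{H},\mathcal{M})$ by a simply matroid-rooted mixed hypergraph $(\mathcal{H}',\mathcal{M}')$ in which each root $r\in R$ is split into a set $Q_r$ of simple roots together with $|Q_r|$ parallel copies in the matroid. As shown in that proof, the operation of identifying each $Q_r$ back to a single vertex yields a weight-preserving correspondence between matroid-based packings of mixed hyperarborescences in $(\mathcal{H}',\mathcal{M}')$ and those in $(\mathcal{H},\mathcal{M})$. It therefore suffices to solve the problem in the simply matroid-rooted setting, to which Corollary \ref{vtgiil} applies.

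Set $k=r_{\mathcal{M}'}(R')$ and consider the directed extension $\mathcal{D}_{\mathcal{H}'}$. By Lemma \ref{vdgdfze} together with Corollary \ref{vtgiil}, the common independent sets of size $k|V|$ of the extended Tanigawa matroid $\mathcal{T}^{ex}_{(\mathcal{H}',\mathcal{M}')}$ and the entering matroid $\mathcal{M}^{\oplus}_{(\mathcal{D}_{\mathcal{H}'},k)}$ are precisely the dyperedge sets in $\mathcal{A}\cup\mathcal{A}_\mathcal{E}$ of the feasible matroid-based packings of hyperarborescences in $(\mathcal{D}_{\mathcal{H}'},\mathcal{M}')$, which in turn correspond to the matroid-based packings of mixed hyperarborescences in $(\mathcal{H}',\mathcal{M}')$. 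To make this correspondence weight-preserving, define $w':\mathcal{A}\cup\mathcal{A}_\mathcal{E}\rightarrow\mathbb{R}$ by $w'(a)=w(a)$ for every original dyperedge $a\in\mathcal{A}$ and $w'(\vec e)=w(e)$ for every orientation $\vec e$ in the bundle $\mathcal{A}_e$ of each hyperedge $e\in\mathcal{E}$. Because the $|e|$ parallel copies inside each bundle of $\mathcal{T}^{ex}_{(\mathcal{H}',\mathcal{M}')}$ prevent selecting more than one orientation of any hyperedge, the total $w'$-weight of any common independent set equals the $w$-weight of the associated packing.

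Now apply Edmonds' weighted matroid intersection algorithm (Theorem \ref{matroidintersection}) to these two matroids with the weight function $w'$ and target cardinality $k|V|$. A polynomial time independence oracle for $\mathcal{T}^{ex}_{(\mathcal{H}',\mathcal{M}')}$ is supplied by Corollary \ref{oracmixhyp}, using the hypothesised oracle for $\mathcal{M}$; for the entering matroid an oracle is immediate since it is a direct sum of uniform matroids. Either the algorithm reports that no common independent set of the target size exists, in which case no matroid-based packing exists in $(\mathcal{H},\mathcal{M})$ by the correspondences above, or it returns one of minimum $w'$-weight, from which the desired minimum-$w$-weight packing in $(\mathcal{H},\mathcal{M})$ is recovered via Lemma \ref{vdgdfze} and the reverse of the simplicity reduction.

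The main obstacle is essentially bookkeeping: one must check that the weights translate correctly across the simplicity reduction, the passage to the directed extension, and the parallel-element construction of the extended Tanigawa matroid, so that optimizing $w'$ in the matroid intersection instance corresponds exactly to optimizing $w$ over matroid-based packings of mixed hyperarborescences. All of the deeper machinery, namely the matroidal characterisation in Corollary \ref{vtgiil}, the efficient independence oracle of Corollary \ref{oracmixhyp}, and Edmonds' algorithm, is already in place, so the proof is a matter of assembling these ingredients in the right order.
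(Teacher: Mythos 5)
Your proposal is correct and follows essentially the same route as the paper's proof: reduce to the simply rooted case via the construction of Theorem \ref{matmixhypbranch}, pass to the directed extension with the weight function $w'$, apply Corollary \ref{vtgiil} and Lemma \ref{vdgdfze} to translate the problem into weighted matroid intersection of the extended Tanigawa matroid and the entering matroid, and invoke Theorem \ref{matroidintersection} with the oracles from Corollary \ref{oracmixhyp}. The one detail you gloss over is that recovering the actual packing (not just its dyperedge set) from the optimal common independent set requires the algorithmic result of Fortier et al.\ (Theorem \ref{algofkmst}), since Lemma \ref{vdgdfze} only asserts existence of the corresponding packing.
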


\begin{proof} Note that we may suppose that $(\mathcal{H},\mathcal {M})$ is simply rooted. Indeed, using the polynomial time algorithm for the simply rooted case, the proof of Theorem \ref{matmixhypbranch} can be turned into a polynomial time algorithm   for the  rooted case by observing that the polynomial time  independence oracle for $\mathcal{M}$ trivially provides a polynomial time  independence oracle for $\mathcal{M}'$. 

Let $\mathcal{D}_{\mathcal{H}}=(V\cup R,\mathcal{A}\cup \mathcal{A}_{\mathcal{E}})$ be the directed extension of $\mathcal{H}$, {\boldmath$k$} $=r_{\mathcal{M}}(R)$ and define {\boldmath$w'$}$:\mathcal{A}\cup \mathcal{A}_{\mathcal{E}}\rightarrow \mathbb{R}$ by $w'(a)=w(a)$ for all $a \in \mathcal{A}$ and $w'(a)=w(e)$ for all $a \in \mathcal{A}_{e}$ for all $e \in\mathcal{E}$. 

By Lemma \ref{vdgdfze}, it suffices to decide in polynomial time whether a feasible matroid-based  packing of hyperarborescences  exists in $(\mathcal{D}_\mathcal{H},\mathcal{M})$ and to find the dyperedge set of one of minimum weight with respect to $w'$ in polynomial time if such a packing exists. Indeed, by Lemma \ref{vdgdfze}, the dyperedge set of this feasible matroid-based  packing of hyperarborescences in $(\mathcal{D}_\mathcal{H},\mathcal{M})$ is the dyperedge set of a matroid-based packing of hyperarborescences in $(\vec{\mathcal{H}},\mathcal {M})$ where $\vec{\mathcal{H}}$ is an orientation of $\mathcal{H}.$
Then, by Theorem \ref{algofkmst}, from the dyperedge set of the packing we can find the packing itself in polynomial time. Finally, by  Lemma \ref{vdgdfze}, omitting the orientations of the dyperedges in $\mathcal{A}_{\mathcal{E}}$ of the packing, we obtain a matroid-based packing of mixed hyperarborescences in $(\mathcal{H},\mathcal {M}).$

By Corollary \ref{vtgiil}, it is enough to decide in polynomial time whether a common independent set of size $k|V|$ of the extended  Katoh-Tanigawa matroid $\mathcal{K}^{ex}_{({\mathcal{H}},\mathcal{M})}$ and the entering matroid $\mathcal{M}^{\oplus}_{(\mathcal{D}_{\mathcal{H}},k)}$ exists, and if such a set exists to find one of  minimum weight with respect to $w'$ in polynomial time. 
Since a polynomial time independence oracle for $\mathcal{K}^{ex}_{({\mathcal{H}},\mathcal{M})}$ is available by Corollary \ref{oracmixhyp} and a polynomial time independence oracle for $\mathcal{M}^{\oplus}_{(\mathcal{D}_{\mathcal{H}},k)}$ is easily available, the theorem follows from Theorem \ref{matroidintersection}.
\end{proof}

\subsection{Weighted algorithms for matroid-reachability-based packings of mixed hyperarborescences}\label{mainalgo}

This final section is dedicated to showing how to turn the proof of the Theorem \ref{matmixhypreach} into a recursive polynomial time algorithm for finding matroid-reachability-based packings of mixed hyperarborescences of minimum weight using the algorithm obtained in Theorem \ref{algo2}. 
The following theorem is our main algorithmic result.

\begin{theorem}\label{algo3}
Given a  matroid-rooted mixed hypergraph $(\mathcal{H}=(V \cup R, \mathcal{A}\cup \mathcal{E}),\mathcal{M}=(R,r_{\mathcal{M}}))$  with a polynomial time independence oracle for $\mathcal{M}$ being available and a weight function $w: \mathcal{A}\cup \mathcal{E} \rightarrow \mathbb{R}$, we can decide in polynomial time whether a matroid-reachability-based packing of mixed hyperarborescences in $(\mathcal{H}, \mathcal{M})$ exists. Further, if such a packing exists, then we can compute one of minimum weight with respect to $w$ in polynomial time.
\end{theorem}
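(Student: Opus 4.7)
The plan is to turn the inductive proof of Theorem \ref{matmixhypreach} into a recursive polynomial time algorithm, invoking Theorem \ref{algo2} as a black-box subroutine for the matroid-based subproblem that arises at each level. Given an input $(\mathcal{H}, \mathcal{M}, w)$, first I would find in polynomial time a strongly connected component $C$ of $\mathcal{H}-R$ with no dyperedge leaving, noting that such a $C$ also has no hyperedge crossing its boundary (any such hyperedge would merge vertices into $C$ via mixed hyperpaths in both directions). Then I would recursively compute a minimum weight matroid-reachability-based packing $\{\mathcal{B}_r^1\}_{r \in R}$ on the strictly smaller instance $(\mathcal{H}_1, \mathcal{M}, w)$ where $\mathcal{H}_1 = \mathcal{H}-C$, extract the trimmings $\{B_r^1\}_{r \in R}$, and build the auxiliary matroid-rooted mixed hypergraph $(\mathcal{H}_2, \mathcal{M}_2)$ exactly as in the proof of Theorem \ref{matmixhypreach}.

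Next, I would equip $(\mathcal{H}_2, \mathcal{M}_2)$ with the weight function $w_2$ that assigns $w(a)$ to each dyperedge of $\mathcal{H}[C]$ and each new dyperedge $a'$ stemming from an entering dyperedge $a$, assigns $w(e)$ to each hyperedge of $\mathcal{H}[C]$, and assigns $0$ to each auxiliary arc $rt_a$ and parallel arc $head(a)t_a$. Using the polynomial time independence oracle for $\mathcal{M}_2 = \mathcal{M}|R_2$ derived trivially from that of $\mathcal{M}$, I would invoke Theorem \ref{algo2} on $(\mathcal{H}_2, \mathcal{M}_2, w_2)$ to obtain a minimum weight matroid-based packing $\{\mathcal{B}_r^2\}_{r \in R_2}$. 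Finally, gluing the two packings exactly as in Lemma \ref{reachmixeddpack} yields the output $\{\mathcal{B}_r\}_{r \in R}$. Termination is clear from the strict decrease of $|V|$, $\mathcal{H}_2$ is of polynomial size, Theorem \ref{algo2} runs in polynomial time, and the correctness of the combined packing (validity as a matroid-reachability-based packing) is inherited verbatim from Lemma \ref{reachmixeddpack}, which does not rely on optimality of its inputs.

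The main obstacle is the optimality proof. By the construction of $w_2$ and the gluing rule, one has $w(\{\mathcal{B}_r\}) = w(\{\mathcal{B}_r^1\}) + w_2(\{\mathcal{B}_r^2\})$, and the first summand is minimum by induction while the second is minimum over packings on our specific $(\mathcal{H}_2, \mathcal{M}_2, w_2)$. The delicate point is that a globally optimal packing $\mathcal{B}^*$ on $\mathcal{H}$ decomposes into an $\mathcal{H}_1$-part $\{\mathcal{B}_r^{1*}\}$ and an $\mathcal{H}_2$-part that lives in an auxiliary instance built from vertex sets $V(B_r^{1*})$ that may differ from our $V(B_r^1)$, so its dyperedges $a'$ could rely on different arcs $rt_a$. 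The exchange argument I expect to carry out is that since both spanning families $\{V(B_r^{1})\}$ and $\{V(B_r^{1*})\}$ satisfy the same matroid-basis conditions (namely $\{r : v \in V(B_r)\}$ is a basis of $P_v^{\mathcal{H}_1}\cap R$ for every $v \in V_1$), the $r_{\mathcal{M}_2}(R_2)$ parallel arcs $head(a)t_a$ together with the matroid exchange axiom on $\mathcal{M}_2$ allow any matroid-based packing feasible in the $\mathcal{H}_2$ built from $\{B_r^{1*}\}$ to be rerouted at the same weight to the $\mathcal{H}_2$ built from $\{B_r^1\}$. Verifying this rerouting rigorously, so that greedy recursion matches the joint minimum, is the technical heart of the proof.
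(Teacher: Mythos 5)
Your algorithm is the paper's algorithm: the same component $C$, the same subinstances $(\mathcal{H}_1,\mathcal{M},w_1)$ and $(\mathcal{H}_2,\mathcal{M}_2,w_2)$ with the same weights, the recursive call plus the call to Theorem \ref{algo2}, and the gluing of Lemma \ref{reachmixeddpack}; the feasibility and running-time parts are fine. The genuine gap is exactly where you place it: the optimality of the glued packing rests on an ``exchange argument I expect to carry out,'' which is neither executed nor, in the form you describe, the right mechanism. You propose to build a second auxiliary instance from the trimmings $\{B^{1*}_r\}$ of a global optimum and then ``reroute'' a packing from that instance into the one built from $\{B^1_r\}$ via the exchange axiom of $\mathcal{M}_2$; it is unclear what would be exchanged (the basis conditions constrain which roots cover a vertex, not which tail vertex of an entering dyperedge a given arborescence uses), and no exchange inside $\mathcal{M}_2$ by itself supplies arcs $rt_a$ that are absent from $\mathcal{H}_2$.

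The paper closes this gap without ever forming a second auxiliary instance: it projects a global optimum $\mathcal{B}^*=\{\mathcal{B}^*_r\}_{r\in R}$ directly into the two fixed subinstances. Deleting $C$ from each $\mathcal{B}^*_r$ gives a matroid-reachability-based packing $\mathcal{B}^*_1$ feasible in $(\mathcal{H}_1,\mathcal{M})$; restricting each $\mathcal{B}^*_r$ with $r\in R_2$ to $C$ and attaching each relevant $t_a$ either by the arc $rt_a$ together with the dyperedge $a'$ (when $a\in\rho^-_{\mathcal{A}}(C)\cap\mathcal{A}(\mathcal{B}^*_r)$) or as a leaf by one of the $r_{\mathcal{M}_2}(R_2)$ weight-zero parallel arcs $head(a)t_a$ (when $a\notin\mathcal{A}(\mathcal{B}^*_r)$ but $head(a)\in U_r^{\mathcal{B}^*_r}$) gives a matroid-based packing $\mathcal{B}^*_2$ feasible in $(\mathcal{H}_2,\mathcal{M}_2)$. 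Those backward arcs, which exist for every $t_a$ independently of the recursively chosen $\{B^1_r\}$ and are numerous enough because at most $r_{\mathcal{M}_2}(R_2)$ arborescences of the packing contain $head(a)$, are precisely the device that decouples the projection from the particular first-level packing; since $w(\mathcal{B}^*)=w_1(\mathcal{B}^*_1)+w_2(\mathcal{B}^*_2)\ge w_1(\mathcal{B}^1)+w_2(\mathcal{B}^2)=w(\mathcal{B})$, optimality follows. To complete your write-up, replace the rerouting plan by this direct projection.
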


\begin{proof}
We use the notation of the proof of the Theorem \ref{matmixhypreach}. 
It is known that we can  find in polynomial time the vertex sets of the strongly connected components of $\mathcal{H}$ and hence one strongly connected component $C$ that has no dyperedge 
leaving can be found in polynomial time.

We define the weight functions {\boldmath$w_1$}$:\mathcal{A}_1\cup \mathcal{E}_1\rightarrow \mathbb{R}$ and {\boldmath$w_2$}$:\mathcal{A}_2\cup \mathcal{E}_2\rightarrow \mathbb{R}$ as follows. Let $w_1(e)=w(e)$ for all $e \in \mathcal{A}_1\cup\mathcal{E}_1$ and let  $w_2(e)=w(e)$ for all $e \in \mathcal{A}(C)\cup\mathcal{E}(C)$, $w_2(a')=w(a)$ for all $a' \in \rho_{\mathcal{A}_2}^-(C)$ and $w_2(a)=0$ for all $a \in \rho_{\mathcal{A}_2}^-(T)$. 

Recursively, we may suppose that we can decide in polynomial time if a matroid-reachability-based packing of mixed hyperarborescences exists in $(\mathcal{H}_1,\mathcal{M})$. If such a packing does not exist, then no matroid-reachability-based packing of mixed hyperarborescences exists in $(\mathcal{H},\mathcal{M})$. If such a packing exists, then, recursively, we may suppose that one,  {\boldmath $\mathcal{B}^1$},  minimizing $w_1(\mathcal{B}^1)$ can be computed in polynomial time. By Theorem \ref{algo2}, we can decide in polynomial time if a matroid-based packing of mixed hyperarborescences exists in $(\mathcal{H}_2,\mathcal{M}_2)$. If such a packing does not exist, then no matroid-reachability-based packing of mixed hyperarborescences exists in $(\mathcal{H},\mathcal{M})$. If such a packing exists, then, one,  {\boldmath $\mathcal{B}^2$},  minimizing $w_2(\mathcal{B}^2)$ can be computed in polynomial time by Theorem \ref{algo2}. We can construct in polynomial time the matroid-reachability-based packing {\boldmath$\mathcal{B}$} of mixed hyperarborescences in $(\mathcal{H},\mathcal{M})$ by merging $\mathcal{B}^1$ and $\mathcal{B}^2$ as in the proof of Lemma \ref{reachmixeddpack}. Note that $w_1(\mathcal{B}^1)+w_2(\mathcal{B}^2)=w(\mathcal{B})$. 
We now prove the optimality of $\mathcal{B}$ in two steps.

\begin{lemma}\label{qdcfd}
There is a matroid-reachability-based packing $\mathcal{B}^*$ of mixed hyperarborescences in $(\mathcal{H},\mathcal{M})$ that minimizes $w(\mathcal{B}^*)$ and extends $\mathcal{B}^1$.
\end{lemma}
\begin{proof}
Let {\boldmath $\hat{\mathcal{B}}$} $=\{${\boldmath $\hat{\mathcal{B}}_r$}$\}_{r \in R}$ be a matroid-reachability-based packing of mixed hyperarborescences in $(\mathcal{H},\mathcal{M})$ minimizing $w(\hat{\mathcal{B}})$, {\boldmath $\hat{\mathcal{A}}_C$} $=\mathcal{A}(\hat{\mathcal{B}})\cap(\mathcal{A}(C)\cup \delta_{\mathcal{A}}^-(C))$, {\boldmath $\hat{\mathcal{E}}_C$} $=\mathcal{E}(\hat{\mathcal{B}})\cap\mathcal{E}(C)$ and {\boldmath $\hat{\mathcal{F}}_C$} $=\hat{\mathcal{A}}_C \cup \hat{\mathcal{E}}_C.$

Now consider {\boldmath $\mathcal{H}'$}$=(V \cup R, \mathcal{A}' \cup \mathcal{E'})$ with {\boldmath$\mathcal{A}'$} $=\mathcal{A}_1\cup \hat{\mathcal{A}}_C$ and {\boldmath$\mathcal{E}'$} $=\mathcal{E}_1\cup \hat{\mathcal{E}}_C$. Observe that $\hat{\mathcal{B}}$ is a matroid-reachability-based packing of mixed hyperarborescences in $(\mathcal{H}',\mathcal{M})$ and $\mathcal{B}^1$ is a matroid-reachability-based packing of mixed hyperarborescences in $(\mathcal{H}'-C,\mathcal{M})$. The proof of Theorem \ref{matmixhypreach} implies that $\mathcal{B}^1$ can be extended to a matroid-reachability-based packing of mixed hyperarborescences $\mathcal{B}^*$  in $(\mathcal{H}',\mathcal{M})$. As $\mathcal{B}^*$ is also a matroid-reachability-based packing of mixed hyperarborescences in $(\mathcal{H},\mathcal{M})$ and $\mathcal{B}^*$ extends $\mathcal{B}_1$, it suffices to prove that $w(\mathcal{B}^*)\leq w(\hat{\mathcal{B}})$ holds. 

Let {\boldmath $\hat{\mathcal{B}}_1$} be obtained from $\hat{\mathcal{B}}$  by deleting the vertices in $C$ from each $\hat{\mathcal{B}}_r$.
Then, since $d_\mathcal{A}^+(C)=d_\mathcal{E}(C)=0$, we obtain that $\hat{\mathcal{B}}_1$ is a matroid-reachability-based packing of mixed hyperarborescences in $(\mathcal{H}_1,\mathcal{M})$. This yields $w(\mathcal{B}^1)\leq w(\hat{\mathcal{B}}_1)$.

Let {\boldmath$\mathcal{F}$} $=(\mathcal{A}(\mathcal{B}^*)\cup \mathcal{E}(\mathcal{B}^*))-(\mathcal{A}(\mathcal{B}^1)\cup \mathcal{E}(\mathcal{B}^1))$. 
Since both $\mathcal{B}^*$ and  $\hat{\mathcal{B}}$ are matroid-reachability based packings of mixed hyperarborescences in $(\mathcal{H},\mathcal{M})$, we have $|\mathcal{F}|=r_\mathcal{M}(P_\mathcal{H}^C)|C|=|\hat{\mathcal{F}}_C|$. As $\mathcal{F}\subseteq \hat{\mathcal{F}}_C$, we obtain $\mathcal{F}= \hat{\mathcal{F}}_C$. 

These arguments yield $w(\mathcal{B}^*)=w(\mathcal{B}^1)+w(\mathcal{F})\leq w(\hat{\mathcal{B}}_1)+w(\hat{\mathcal{F}}_C)=w(\hat{\mathcal{B}})$ that finishes the proof.
\end{proof}

For $r\in R_2,$ let {\boldmath$\mathcal{B}^*_{2,r}$} be obtained from $\mathcal{B}^*_r$ by deleting the vertices not in $C$, by adding $r$ and the vertex set $\{t_a: a\in \rho_{\mathcal{A}}^-(C), head(a)\in U_r^{\mathcal{B}^*_r}\}$, by adding $a'$ and $rt_a$ if $a\in \rho_{\mathcal{A}}^-(C)\cap\mathcal{A}(\mathcal{B}^*_r)$ and by adding  one of the copies of the arc $head(a)t_a$ if $a\in \rho_{\mathcal{A}}^-(C)-\mathcal{A}(\mathcal{B}^*_r)$ and $head(a)\in  U_r^{\mathcal{B}^*_r}.$  As there are $r_{\mathcal{M}}(R_2)$ arcs from $head(a)$ to $t_a$, we can choose the $\mathcal{B}^*_{2,r}$'s to be hyperedge- and dyperedge-disjoint. Observe that for all $a\in \rho_{\mathcal{A}}^-(C)$, $t_a \in U_r^{\mathcal{B}^*_{2,r}}$ if and only if $head(a) \in  U_r^{\mathcal{B}^*_{2,r}}.$ Then {\boldmath $\mathcal{B}^*_2$}  $=\{\mathcal{B}^*_{2,r}\}_{r \in R_2}$ is   a matroid-based packing of  mixed hyperarborescences in $(\mathcal{H}_2,\mathcal{M}_2),$ so $w_2(\mathcal{B}_2)\leq w_2(\mathcal{B}^*_2)$. Note that $w(\mathcal{B}^*)=w_1(\mathcal{B}^1)+w_2(\mathcal{B}_2^*).$ Then, by the minimality of $w(\mathcal{B}^*)$,  we have 
 $w(\mathcal{B})\ge w(\mathcal{B}^*)=w_1(\mathcal{B}^1)+w_2(\mathcal{B}_2^*)\ge w_1(\mathcal{B}^1)+w_2(\mathcal{B}^2)=w(\mathcal{B}).$ It follows that equality holds everywhere and hence $\mathcal{B}$ is of minimum weight with respect to $w.$

Since a polynomial time independence oracle for $\mathcal{M}$ is available, polynomial time independence oracles for all matroids considered are easily available. It follows that this algorithm  runs in polynomial time and we are done.
 \end{proof}

\section*{Acknowledgments}

We wish to thank an anonymous referee for pointing out a mistake in an earlier version of this article.


\end{document}